\renewcommand{\thesection}{\arabic{section}}
\newtheorem{theorem}{Theorem}
\newtheorem{lemma}{Lemma}[section]
\newtheorem{prop}[lemma]{Proposition}
\newtheorem*{defi}{Definition}
\newtheorem*{remark}{Remark}
\renewcommand{\theequation}{\thesection .\arabic{equation}}
\let\subs\subsection
\renewcommand\subsection{\setcounter{equation}{0}
\gdef\theequation{\thesubsection \arabic{equation}}\subs}
\let\sect\section
\renewcommand\section{\setcounter{equation}{0}
\gdef\theequation{\thesection .\arabic{equation}}\sect}
\newcommand{\IR}{{\mathbb{R}}}
\newcommand{\IZ}{{\mathbb{Z}}}
\newcommand{\zv}{\IZ^\nu}
\newcommand{\be}{\begin{equation}}
\newcommand{\ee}{\end{equation}}
\newcommand{\AC}{{\rm{AC}}}
\newcommand{\loc}{{\rm{loc}}}
\newcommand{\ac}{{\rm{ac}}}
\newcommand{\sing}{{\rm{sing}}}
\newcommand{\supp}{\mathop{\rm{supp}}}
\newcommand{\dist}{\mathop{\rm{dist}}}
\newcommand{\Leb}{\mathop{\rm{Leb}}}
\newcommand{\ve}{\varepsilon}
\begin{document}

\smallskip

\title{Almost periodicity in time of solutions of the KdV equation}

\author{Ilia Binder}

\address{Department of Mathematics, University of Toronto, Bahen Centre, 40 St. George St., Toronto, Ontario, CANADA M5S 2E4}

\email{ilia@math.toronto.edu}

\thanks{I.\ B.\ was supported in part by an NSERC Discovery grant.}

\author{David Damanik}

\address{Department of Mathematics, Rice University, 6100 S. Main St. Houston TX 77005-1892, U.S.A.}

\email{damanik@rice.edu}

\thanks{D.\ D.\ was supported in part by NSF grant DMS--1361625.}

\author{Michael Goldstein}

\address{Department of Mathematics, University of Toronto, Bahen Centre, 40 St. George St., Toronto, Ontario, CANADA M5S 2E4}

\email{gold@math.toronto.edu}

\thanks{M.\ G.\ was supported in part by an NSERC Discovery grant.}

\author{Milivoje Lukic}

\address{Department of Mathematics, University of Toronto, Bahen Centre, 40 St. George St., Toronto, Ontario, CANADA M5S 2E4 and Department of Mathematics, Rice University, Houston TX 77005, U.S.A.}

\email{mlukic@math.toronto.edu}

\thanks{M.\ L.\ was supported in part by NSF grant DMS--1301582.}

\thanks{D.\ D., M.\ G., and M.\ L. would also like to thank the Isaac Newton Institute for Mathematical Sciences, Cambridge, for support and hospitality during the programme ``Periodic and Ergodic Spectral Problems" where part of this work was undertaken.}

\begin{abstract}
We study the Cauchy problem for the KdV equation $\partial_t u - 6 u \partial_x u + \partial_x^3 u = 0$ with almost periodic initial data $u(x,0)=V(x)$. We consider initial data $V$, for which the associated Schr\"odinger operator is absolutely continuous and has a spectrum that is not too thin in a sense we specify, and show the existence, uniqueness, and almost periodicity in time of solutions. This establishes a conjecture of Percy Deift for this class of initial data. The result is shown to apply to all small analytic quasiperiodic initial data with Diophantine frequency vector.
\end{abstract}

\maketitle

\tableofcontents

\section{Introduction}

This paper is devoted to the Cauchy problem for the KdV equation,
\begin{equation}\label{KdV}
\partial_t u - 6 u \partial_x u + \partial_x^3 u = 0,
\end{equation}
\begin{equation}\label{KdVinitial}
u(x,0) = V(x)
\end{equation}
with initial data $V:\mathbb{R} \to \mathbb{R}$. It is assumed that $u(x,t): \mathbb{R}^2 \to \mathbb{R}$ and solutions are considered in the classical sense: $u$ is three times differentiable in $x$, once differentiable in $t$, and obeys \eqref{KdV} for each $(x,t) \in \mathbb{R}^2$.

The KdV equation is named after Korteweg--de Vries~\cite{KdV}, who proposed it as a model for the propagation of shallow water waves in a canal. It has received a lot of attention after the discovery in 1960s of infinitely many conserved quantities and the inverse scattering transform by Gardner--Greene--Kruskal--Miura \cite{GGKM,GKM} and the Lax pair formalism \cite{Lax} which showed isospectrality of the associated Schr\"odinger operators $-\partial_x^2 + u(\cdot, t)$. Due to these properties, the KdV equation is often described as completely integrable. This formalism was first implemented for sufficiently fast decaying initial data $V$, where the conserved quantities are integrals of polynomial expressions in the solution and its derivatives, the lowest one being the (square of the) $L^2$ norm, $\int_{\mathbb{R}} \lvert u(x,t)\rvert^2 dx$.

In the 1970s, this framework was successfully extended to solutions of the KdV equation on a torus, $u:\mathbb{T} \times \mathbb{R} \to \mathbb{R}$, where the conserved quantities, of course, take the form of integrals on the torus, e.g. $\int_{\mathbb{T}} \lvert u(x,t)\rvert^2 dx$. It was shown, through the contributions of many authors  \cite{Ga,FZ,Du,No,McKvM,DMN,FMcL,McKT,Du2}, that in this setting, the KdV equation is a completely integrable Hamiltonian system with action-angle variables; for a book treatment, see \cite{KP}. In particular, the existence of action-angle variables implies that solutions are almost periodic in $t$. Of course, solutions on $\mathbb{T} \times \mathbb{R}$ can be viewed as spatially periodic solutions on $\mathbb{R} \times\mathbb{R}$, which motivates the following question.

Deift~\cite{De} posed an open problem whether, for \emph{almost periodic} initial data $V$, the solution of \eqref{KdV}, \eqref{KdVinitial} is almost periodic as a function of $t$. The analysis of the KdV Cauchy problem with almost periodic initial data presents significant new obstacles. The conserved quantities in this setting would take the form of spatial averages, e.g.
\[
\lim_{L\to\infty} \frac 1{2L} \int_{-L}^L \lvert u(x,t) \rvert^2 dx,
\]
and it is not known how to use such conserved quantities to prove even short time existence of a solution. Moreover, isospectrality of Schr\"odinger operators is too weak a condition to restrict the phase space of our solution to a torus, a first prerequisite for true integrability. In addition, unlike Schr\"odinger operators with fast-decaying or periodic potentials, almost periodic Schr\"odinger operators have diverse spectral properties.

In this paper, we answer Deift's question for the case of small quasiperiodic analytic initial data with Diophantine frequency. Let us first define the appropriate class of quasiperiodic functions and then state the theorem.

\begin{defi}
Let $\ve >0$, $0 < \kappa_0 \le 1$, and $\omega \in \mathbb{R}^\nu$ for some $\nu \in \mathbb{N}$. We say that $V \in \mathcal{P}(\omega,\ve,\kappa_0)$ if $V: \mathbb{R} \to \mathbb{R}$ is of the form
\begin{equation}\label{eq:samplingfunc}
V(x) = \sum_{n \in \zv} c(n) e^{2 \pi i n\omega x}\ ,
\end{equation}
such that
\begin{equation}\label{eq:samplingfunc2}
\lvert c(n)\rvert  \le  \ve\exp(-\kappa_0 \lvert n\rvert ), \quad \forall n \in \zv.
\end{equation}
We make $\mathcal{P}(\omega,\ve,\kappa_0)$ a metric space with the metric inherited from $L^\infty(\mathbb{R})$.
\end{defi}

\begin{theorem}\label{TQ}
Let $\omega \in \mathbb{R}^\nu$ obey the Diophantine condition
\begin{equation}\label{eq:dioph}
\lvert n \omega \rvert \ge a_0 \lvert n\rvert^{-b_0}, \quad n \in \zv \setminus \{ 0 \}
\end{equation}
for some
\begin{equation}\label{eq:dioph2}
0 < a_0 < 1,\quad \nu < b_0 < \infty.
\end{equation}
 There exists $\ve_0(a_0,b_0, \kappa_0)>0$ such that, if $\ve < \ve_0$ and  $V \in \mathcal{P}(\omega,\ve,\kappa_0)$, then:
 \begin{enumerate}[(i)]
\item there is a unique solution $u$ of \eqref{KdV}, \eqref{KdVinitial} such that
\begin{equation}\label{boundedness}
u, \partial_{xxx} u \in L^\infty(\mathbb{R} \times [-T, T]), \qquad  \forall T<\infty.
\end{equation}
\item for every $t \in \mathbb{R}$, $u(\cdot,t)$ is quasiperiodic; in fact, $u(\cdot, t) \in \mathcal{P}(\omega,\sqrt{4\ve},\kappa_0/4)$
\item $u$ is almost periodic in $t$, in the following sense: there is a compact (finite or infinite dimensional) torus $\mathbb{T}^d$, a continuous map
\[
\mathcal{M}: \mathbb{T}^{d} \to \mathcal{P}(\omega,\sqrt{4\ve},\kappa_0/4),
\]
a base point $\alpha \in \mathbb{T}^d$, and a direction vector $\zeta \in \mathbb{R}^{d}$ such that $u(\cdot, t) = \mathcal{M}(\alpha+\zeta t)$.
\end{enumerate}
\end{theorem}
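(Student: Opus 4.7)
The plan is to use the inverse spectral transform: Lax's isospectrality of the Schr\"odinger operator $H_V = -\partial_x^2 + V$ under the KdV flow reduces the nonlinear PDE to a linear translation flow on the isospectral torus of $V$. The three assertions are then read off from this picture.

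\emph{Spectral preparation.} For $\ve$ sufficiently small, KAM/reducibility results in the tradition of Dinaburg--Sinai, Moser--P\"oschel, and Eliasson apply to $H_V$ with $V \in \mathcal{P}(\omega,\ve,\kappa_0)$ and Diophantine $\omega$. They yield purely absolutely continuous spectrum $\mathfrak{E} = \mathfrak{E}(V) \subset \mathbb{R}$ of Cantor--half-line type, with gaps $\mathcal{G}_n$ indexed essentially by $n \in \zv \setminus \{0\}$ and satisfying quantitative bounds of the form $|\mathcal{G}_n| \lesssim \ve \exp(-\kappa_0'|n|)$ for some $\kappa_0' > \kappa_0/2$. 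In addition, the Schr\"odinger cocycle is reducible at almost every $E \in \mathfrak{E}$, and $V$ is reflectionless on $\mathfrak{E}$.

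\emph{Isospectral torus and KdV as a linear flow.} Let $\mathcal{R}(\mathfrak{E})$ denote the compact class of reflectionless potentials with spectrum $\mathfrak{E}$. Dirichlet data in the gaps parameterize $\mathcal{R}(\mathfrak{E})$ by a homeomorphism $\mathcal{M}: \mathbb{T}^d \to \mathcal{R}(\mathfrak{E})$ where $d$ is the (finite or infinite) number of gaps. The KdV hierarchy preserves $\mathcal{R}(\mathfrak{E})$ and, via the infinite-dimensional Dubrovin--Abel calculus, linearizes in these Dirichlet coordinates to a translation $\alpha \mapsto \alpha + \zeta t$ with an explicit frequency vector $\zeta$ determined by the spectral asymptotics. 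Choose $\alpha \in \mathbb{T}^d$ with $\mathcal{M}(\alpha) = V$ and set
\begin{equation*}
u(x,t) := \mathcal{M}(\alpha + \zeta t)(x).
\end{equation*}
Uniform-in-$t$ Fourier estimates on $\mathcal{R}(\mathfrak{E})$ give that $u$ is $C^3$ in $x$ and $C^1$ in $t$ with $u, \partial_{xxx} u \in L^\infty(\mathbb{R}\times[-T,T])$, and the Lax pair identity for $H_{u(\cdot, t)}$ shows $u$ solves \eqref{KdV} pointwise. Parts (ii) and (iii) are then built into the construction, provided the target class $\mathcal{P}(\omega,\sqrt{4\ve},\kappa_0/4)$ can be matched.

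\emph{Quasiperiodic confinement and uniqueness.} The crucial quantitative step is to verify the inclusion $\mathcal{R}(\mathfrak{E}) \subset \mathcal{P}(\omega, \sqrt{4\ve}, \kappa_0/4)$: every reflectionless potential with spectrum $\mathfrak{E}(V)$ admits a Fourier series in $e^{2\pi i n \omega x}$ with exponentially decaying coefficients at rate $\kappa_0/4$. This is proved by combining the gap estimates of the spectral step with cocycle reducibility, which transports the quasiperiodic structure of $V$ in the frequency $\omega$ to arbitrary elements of the torus and controls the inflation of the coefficient bound from $\ve$ to $\sqrt{4\ve}$. For uniqueness, any rival solution $\tilde u$ in the class \eqref{boundedness} keeps $H_{\tilde u(\cdot,t)}$ isospectral to $H_V$, hence $\tilde u(\cdot,t)\in\mathcal{R}(\mathfrak{E})$ for all $t$; the flow in Dirichlet coordinates is then forced to be the linear translation, giving $\tilde u = u$.

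\emph{Main obstacle.} The principal difficulty is the quantitative confinement in the last paragraph: showing that the entire isospectral torus, a purely spectrally defined object, lies in the slightly enlarged quasiperiodic class with the prescribed exponential decay. This is where Diophantinicity and smallness of $\ve$ are used in an essential way, and it requires a careful marriage of the Dirichlet/Abel-map parameterization with the analytic reducibility of the cocycle. The regularity of $\mathcal{M}(\alpha + \zeta t)$ as a classical solution and the convergence of the infinite-gap Dubrovin system are substantial but, once the quasiperiodic control of $\mathcal{R}(\mathfrak{E})$ is in hand, reduce to uniform Fourier estimates.
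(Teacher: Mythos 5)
Your overall architecture (reflectionless isospectral torus, Dirichlet data, Abel-map linearization, and the confinement $\mathcal{R}(S)\subset\mathcal{P}(\omega,\sqrt{4\ve},\kappa_0/4)$ as the quantitative input) matches the paper's, but your uniqueness step contains a genuine gap. You argue that any rival solution $\tilde u$ in the class \eqref{boundedness} keeps $H_{\tilde u(\cdot,t)}$ isospectral to $H_V$ and ``hence $\tilde u(\cdot,t)\in\mathcal{R}(\mathfrak{E})$ for all $t$.'' Isospectrality does not imply membership in the reflectionless torus: the paper stresses exactly this point (the supercritical almost Mathieu operator has the same spectrum, up to scaling, as its subcritical dual but is not reflectionless), and indeed ``isospectrality of Schr\"odinger operators is too weak a condition to restrict the phase space of our solution to a torus.'' What the paper actually uses is Rybkin's result that the full Weyl $M$-matrix evolves by $\partial_t M = PM + MP^T$ and that the reflection coefficient (not merely the spectrum) is invariant under the flow for solutions obeying \eqref{boundedness}; this is what keeps $\tilde u(\cdot,t)$ reflectionless. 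Even granting that, your claim that ``the flow in Dirichlet coordinates is then forced to be the linear translation'' does not follow from knowing $\tilde u(\cdot,t)\in\mathcal{R}(S)$ pointwise in $t$: one must derive the Dubrovin-type ODE $\partial_t\varphi_j=\Xi_j(\varphi)$ for the time evolution of the Dirichlet data from the $M$-matrix evolution (Section 3 of the paper), including the delicate analysis at gap edges where the implicit function theorem fails, and then invoke Lipschitz uniqueness for the vector fields $\Psi,\Xi$ --- which itself requires verifying the Craig-type conditions \eqref{Craig2}, \eqref{traceconditions} via the subexponential bound on the constants $C_m$. Without these ingredients the uniqueness assertion is unsupported.

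A secondary, smaller issue: defining $u(x,t):=\mathcal{M}(\alpha+\zeta t)(x)$ and appealing to ``the Lax pair identity'' does not by itself show that $u$ solves \eqref{KdV} in the infinite-gap case. The paper proves this by finite-gap approximation: the trace formulas \eqref{Q1W}, \eqref{Q2W} and the uniform-on-compacts convergence of $u_N$ in $C^4$ (which again rests on the Lipschitz/Craig-type estimates) are what let one pass the finite-gap KdV identity to the limit. You flag this as ``substantial but reducible to uniform Fourier estimates,'' which underestimates the role of the gap-structure conditions; but this part is at least the right idea, whereas the uniqueness step as written would fail.
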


Throughout this text, $\mathbb{T} = \mathbb{R} / 2\pi \mathbb{Z}$.

In \eqref{eq:samplingfunc2}, \eqref{eq:dioph}, and elsewhere in this work,  $\lvert n \rvert :=  \sum_j \lvert n_j \rvert$ for $n = (n_1, \dots, n_\nu) \in \zv$.

\begin{remark}
On $\mathcal{P}(\omega,\ve,\kappa_0)$, the $L^\infty$-norm is equivalent with the norm
\[
\lVert V - \tilde V \rVert_r = \left( \sum_{n\in \zv} \lvert c(n) - \tilde c(n) \rvert^2 e^{2\lvert n\rvert r} \right)^{1/2}
\]
for any $r < \kappa_0$, and with the Sobolev norm inherited from $W^{k,\infty}(\mathbb{R})$ for any $k\in \mathbb{N}$.  Part (iii) of the previous theorem therefore implies that besides $u$, derivatives of $u$ are also almost periodic in $t$, and so is each Fourier coefficient $c(n,t)$ of $u(x,t)$.
\end{remark}

The initial value problem \eqref{KdV}, \eqref{KdVinitial} for quasiperiodic initial data was previously approached by Tsugawa~\cite{Ts} and Damanik--Goldstein~\cite{DG2}, building upon work of Kenig--Ponce--Vega~\cite{KPV} and Bourgain~\cite{Bo}. \cite{Ts} proved local existence and uniqueness for initial data of the form \eqref{eq:samplingfunc} with Diophantine $\omega$ and $c(n)$ decaying at a sufficiently fast polynomial rate, and \cite{DG2} proved local existence and uniqueness for $V \in \mathcal{P}(\omega,\epsilon,\kappa_0)$ for all $\omega$ and global existence and uniqueness for Diophantine $\omega$. However, in all those results, uniqueness is proved within a class of solutions such that $u(\cdot, t)$ is quasiperiodic for all $t$, with similar decay conditions on Fourier coefficients as those assumed for the initial data (polynomial in \cite{Ts} and exponential in \cite{DG2}). Our Theorem~\ref{TQ} improves on this by proving uniqueness within a larger class of solutions assuming a priori only a boundedness condition \eqref{boundedness} and not quasiperiodicity. Moreover, while the class of initial data in Theorem~\ref{TQ} is the same as that considered in \cite{DG2}, this paper provides an alternative proof of the results from \cite{DG2} for Diophantine $\omega$. The alternative proof avoids the detailed combinatorial analysis in \cite{DG2} and relies instead on the inverse spectral theory of Schr\"odinger operators and fundamental work of Rybkin~\cite{Ry}, coupled with \cite{DGL2}. Finally, Theorem~\ref{TQ} proves almost periodicity of the solution in $t$, which was beyond the reach of the approach in \cite{Ts,DG2}.

In fact, we are able to prove a more general result, and prove existence, uniqueness, and almost periodicity in $t$ whenever $V$ is almost periodic and the spectrum of the associated Schr\"odinger operator $- \partial_x^2 + V$ is absolutely continuous and not too thin, in a sense to be made rigorous below. Stating that result requires introducing additional prerequisites.

For $W:\mathbb{R} \to \mathbb{R}$, consider the Schr\"odinger operator
\begin{equation} \label{eq:SchrOp}
[H_W y](x) = - y''(x) +  W(x) y(x), \quad x \in \IR.
\end{equation}
In the cases of interest in this paper, $W$ will be a bounded function, so \eqref{eq:SchrOp} defines an unbounded self-adjoint operator on $L^2(\mathbb{R})$ with the domain
\[
D(H_W) = D(H_0) = \{ y: \mathbb{R} \to \mathbb{R} \mid y\in \AC_\loc(\mathbb{R}), y' \in \AC_\loc(\mathbb{R}), y'' \in L^2(\mathbb{R})\}.
\]
The spectrum $\sigma(H_W)$ is the set of $z\in \mathbb{C}$ for which $H_W - z$ does \emph{not} have a bounded inverse map $(H_W - z)^{-1}:L^2(\mathbb{R}) \to D(H_W)$. Since $H_W$ is self-adjoint, $\sigma(H_W) \subset \mathbb{R}$.

For $\psi \in L^2(\mathbb{R})$, the spectral measure $d\mu_\psi$ is the unique measure on $\mathbb{R}$ with the property that
\[
\langle \psi, (H_W - z)^{-1} \psi \rangle = \int \frac 1{x-z} d\mu_\psi(x), \qquad \forall z \in \mathbb{C} \setminus \mathbb{R}.
\]
Using the Lebesgue decomposition of $d\mu_\psi = d\mu_{\psi,\ac} + d\mu_{\psi,\sing}$ with respect to Lebesgue measure, the absolutely continuous spectrum of $H_W$ can be defined as the smallest common topological support of absolutely continuous parts of all spectral measures,
\[
\sigma_\ac(H_W) = \overline{\bigcup_{\psi \in L^2(\mathbb{R})} \supp d\mu_{\psi,\ac}}.
\]
Clearly, $\supp d\mu_{\psi,\ac} \subset \supp d\mu_\psi \subset \sigma(H_W)$, so $\sigma_\ac(H_W) \subset \sigma(H_W)$. We will later be interested in cases in which $\sigma_\ac(H_W) = \sigma(H_W)$.

The spectrum $S = \sigma(H_W)$ is closed  and bounded from below but not from above, so it can be written in the form
\[
S = [\underline E, \infty) \setminus \bigcup_{j\in J} (E_j^-,E_j^+),
\]
where $\underline E = \inf S$ and $(E_j^-, E_j^+)$ are the maximal open intervals in $\mathbb{R} \setminus S$, called gaps. We will sometimes also work with the closure of $S$ in the Riemann sphere, denoted $\bar S = S \cup \{ \infty\}$.

Denote by $g_{\mathbb{C} \setminus S}(z)$ the potential theoretic Green's function for the domain $\mathbb{C} \setminus S$ with a logarithmic pole at $\underline E - 1$. The set $\bar S$ is regular if
\begin{equation}\label{regularity}
\lim_{\substack{z\in \mathbb{C} \setminus S \\ z \to x}} g_{\mathbb{C} \setminus S}(z) = 0, \quad \forall x\in \bar S.
\end{equation}
Denote by $c_j \in (E_j^-, E_j^+)$ the critical points of $g_{\mathbb{C} \setminus S}$ in the gaps. The Parreau--Widom condition states that
\begin{equation}\label{ParreauWidom}
\sum_{j\in J} g_{\mathbb{C} \setminus S}(c_j) < \infty.
\end{equation}

Regularity and the Parreau--Widom condition were first used in spectral theory in groundbreaking work of Sodin--Yuditskii \cite{SY,SY1}. The results in those papers are stated in terms of homogeneity in the sense of Carleson, that is, the existence of $\tau>0$ such that
\begin{equation}\label{Carleson}
\lvert S \cap [x_0 -  \epsilon, x_0+\epsilon] \rvert \ge \tau \epsilon, \quad \forall x_0 \in S, \quad \forall \epsilon \in (0,1],
\end{equation}
and finite gap length. However, it is then remarked that those conditions imply regularity and the Parreau--Widom condition, and the remainders of their proofs rely only on that.

We will need additional conditions on the spectrum. We denote
\[
\gamma_j = E_j^+ - E_j^-
\]
for $j\in J$ and
\begin{align*}
\eta_{j,l} & = \dist((E_j^-,E_j^+), (E_l^-,E_l^+)) \\
\eta_{j,0} & = \dist( (E_j^-,E_j^+), \underline E)
\end{align*}
for $j, l\in J$ (notationally, we assume here that our abstract index set $J$ does not contain $0$ as an element). We also denote
\begin{equation}\label{Cjdef}
C_j = (\eta_{j,0} + \gamma_j)^{1/2} \prod_{\substack{l \in J \\ l\neq j}} \left( 1 + \frac{\gamma_l}{\eta_{j,l}} \right)^{1/2}.
\end{equation}
We assume that $S$ satisfies a set of conditions of Craig~\cite{Cr}:
\begin{equation}\label{Craig1}
\sum_{j\in J} \gamma_j < \infty, \qquad \sup_{j \in J} \gamma_j C_j < \infty,  \qquad  \sup_{j\in J} \frac{\gamma_j^{1/2}}{\eta_{j,0}} C_j < \infty, \qquad \sup_{j\in J} \sum_{\substack{l \in J \\ l\neq j}} \frac{\gamma_j^{1/2} \gamma_l^{1/2}}{\eta_{j,l}} C_j < \infty.
\end{equation}
Craig introduced these conditions to ensure that a vector field, associated with the translation flow, is Lipshitz on the isospectral torus $\mathcal{D}(S) = \mathbb{T}^J$ with the metric given by
\begin{equation}\label{metricCraig}
\lVert \varphi - \tilde\varphi \rVert_{\mathcal{D}(S)} = \sup_{j\in J} \gamma_j^{1/2} \lVert \varphi_j - \tilde \varphi_j \rVert_{\mathbb{T}}.
\end{equation}

In order to ensure that a second vector field, associated with the KdV flow, is Lipshitz, we need to strengthen these conditions as follows:
\begin{equation}\label{Craig2}
\sum_{j\in J} \gamma_j^{1/2} < \infty,
\qquad \sup_{j \in J} \gamma_j^{1/2} \frac{1+\eta_{j,0}}{\eta_{j,0}} C_j < \infty,
\qquad \sup_{j\in J} \sum_{\substack{l \in J  \\ l \neq j}} \left( \frac{\gamma_j^{1/2} \gamma_l^{1/2}}{\eta_{j,l}} \right)^a (1 + \eta_{j,0} ) C_j < \infty \text{ for }a \in \left\{ \frac 12, 1 \right\}.
\end{equation}

Note that while these Craig-type conditions look complicated, they are quite weak. For instance, it is straightforward to see that they are satisfied for the small quasiperiodic initial data of Theorem~\ref{TQ}. Also note that \eqref{Craig2} implies \eqref{Craig1}.

We also wish to emphasize that this form of Craig-type conditions is sufficient, but not necessary. In particular, if one changed the metric \eqref{metricCraig}, the form of the conditions \eqref{Craig1}, \eqref{Craig2} would change. At this stage in the theory, the fine structure of gap sizes and distances is not well understood for general almost periodic operators, so it is not clear that varying the metric would broaden the scope of the following theorem. We therefore choose, for clarity of exposition, to keep the metric \eqref{metricCraig} used by Craig.

To ensure continuity of scalar fields associated with certain trace formulas, we will also assume that
\begin{equation}\label{traceconditions}
\sum_{j\in J} (1+\eta_{j,0}^2) \gamma_j < \infty.
\end{equation}

\begin{theorem}\label{Tap}
Let the initial data $V:\mathbb{R} \to \mathbb{R}$ be uniformly almost periodic. Denote $S=\sigma(H_V)$ and assume that
\begin{enumerate}[(i)]
\item $S = \sigma_\ac(H_{V})$;
\item $S$ is a regular Parreau--Widom set;
\item $S$ obeys the Craig-type conditions \eqref{Craig2}, \eqref{traceconditions};
\end{enumerate}
Then:
\begin{enumerate}[(i)]
\item there is a unique solution $u$ of \eqref{KdV}, \eqref{KdVinitial} which obeys the boundedness condition \eqref{boundedness};
\item the solution $u$ is almost periodic in $t$, in the following sense: there is a continuous map
\[
\mathcal{M}: \mathbb{T}^{J} \to W^{4,\infty}(\mathbb{R}),
\]
a base point $\alpha \in \mathbb{T}^J$, and a direction vector $\zeta \in \mathbb{R}^{J}$ such that $u(\cdot, t) = \mathcal{M}(\alpha+\zeta t)$;
 \item for each $t\in\mathbb{R}$, the function $u(\cdot, t)$ is uniformly almost periodic with frequency module equal to the frequency module of $V$.
 \end{enumerate}
\end{theorem}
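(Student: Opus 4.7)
\emph{Proof plan.} I would place the solution inside the isospectral torus $\cD(S)\cong\mathbb{T}^J$ of reflectionless potentials with spectrum $S$ and realize both $x$-translation and the KdV time flow as linear flows on this torus. Hypotheses (i)--(ii) put $V$ in the Sodin--Yuditskii / Remling setting: $V$ is reflectionless with spectrum $S$, the set of all such potentials forms a compact abelian group $\cD(S)\cong\mathbb{T}^J$, and there is a continuous parametrization $\cM:\mathbb{T}^J\to L^\infty(\IR)$ by Dirichlet data. The trace-formula hypothesis \eqref{traceconditions} is designed precisely to upgrade $\cM$ to a continuous map into $W^{4,\infty}(\IR)$, by differentiating the $x$-trace formula for $V$ up to four times. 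Write $V=\cM(\alpha)$. Craig's work \cite{Cr} shows, under \eqref{Craig1}, that $x$-translation on $\cD(S)$ is a linear flow $\varphi\mapsto\varphi+\xi x$ with a vector field that is Lipschitz (and in angular coordinates constant) in the metric \eqref{metricCraig}. I would then produce the analogous KdV vector field on $\cD(S)$ from the Lax/Hamiltonian structure of the second equation of the KdV hierarchy, use the strengthened conditions \eqref{Craig2} to show that it is Lipschitz, and identify it with a linear flow $\varphi\mapsto\varphi+\zeta t$ for some $\zeta\in\IR^J$. The candidate solution is then $u(x,t):=[\cM(\alpha+\zeta t)](x)$. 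Verifying that it solves \eqref{KdV}, \eqref{KdVinitial} reduces to matching $\zeta$ with the Lax-pair deformation of the Dirichlet data on $\cD(S)$, and to using $\cM(\varphi)\in W^{4,\infty}(\IR)$ to compute $\partial_t u$, $\partial_x u$, $\partial_x^3 u$ pointwise; the bound \eqref{boundedness} is immediate from compactness of $\mathbb{T}^J$ and continuity of $\cM$ into $W^{4,\infty}(\IR)$.

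For uniqueness, which I anticipate as the main technical obstacle, let $\tilde u$ be any other solution satisfying \eqref{boundedness}. The Lax pair applied to the bounded, three-times $x$-differentiable $\tilde u$ yields isospectrality, $\sigma(H_{\tilde u(\cdot,t)})=S$ for all $t$. I would then invoke Rybkin's inverse-spectral framework \cite{Ry}, combined with the reflectionlessness / Wronskian-preservation arguments of \cite{DGL2}, to show that absolute continuity on $S$ and the reflectionless property are propagated by the KdV flow of bounded initial data whose associated Schr\"odinger operator has Parreau--Widom a.c.\ spectrum, so $\tilde u(\cdot,t)\in\cD(S)$ for every $t$. Once confined to $\cD(S)$, $\tilde u$ is an integral curve through $\alpha$ of the same Lipschitz KdV vector field constructed above, and Picard uniqueness on the complete metric space $(\cD(S), \lVert \cdot \rVert_{\cD(S)})$ forces $\tilde u = u$.

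Parts (ii) and (iii) are then essentially formal consequences of the linear-flow picture. The orbit $t\mapsto\alpha+\zeta t$ is a Kronecker flow on the compact abelian group $\mathbb{T}^J$ and hence is Bohr almost periodic; composing with the continuous $\cM:\mathbb{T}^J\to W^{4,\infty}(\IR)$ gives (ii). For each $t$, $u(\cdot,t)=\cM(\alpha+\zeta t)\in\cM(\mathbb{T}^J)$ is uniformly almost periodic by Sodin--Yuditskii; its frequency module is the Pontryagin dual of the closed subgroup of $\mathbb{T}^J$ generated by the $x$-translation direction $\xi$, and since $\alpha$ and $\alpha+\zeta t$ differ only by a translation on $\mathbb{T}^J$ they generate the same such subgroup, so $u(\cdot,t)$ and $V$ share the same frequency module, giving (iii).
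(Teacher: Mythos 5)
Your overall architecture matches the paper's (isospectral torus, two commuting Lipschitz vector fields, Rybkin for uniqueness, a linear flow on $\mathbb{T}^J$ for almost periodicity), but there is a genuine gap at the center of the plan: you assert that Craig's work already exhibits translation as a \emph{linear} flow $\varphi\mapsto\varphi+\xi x$ on $\cD(S)$, ``in angular coordinates constant.'' That is false. In Craig's angular coordinates the translation flow is the Dubrovin system \eqref{translationflowj} with the nonconstant vector field $\Psi$, and the KdV flow is likewise governed by the nonconstant field $\Xi$; neither is linear in these coordinates. Linearization is achieved only after composing with the Sodin--Yuditskii generalized Abel map $A$ of \eqref{linearizedvar}, and proving that $A$ linearizes both flows in the infinite-gap setting is itself a substantial step: the paper does it by invoking the classical Its--Matveev/Jacobi-inversion linearization \eqref{ANlinear} for the finite-gap truncations $S^N$ and then passing to the limit using the uniform convergence $A^N\to A$ together with a quantitative stability estimate for trajectories of nearby Lipschitz vector fields (Lemma~\ref{L43} and its consequences). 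Without this, your definition $u(x,t):=[\cM(\alpha+\zeta t)](x)$ has no justification, and the same finite-gap approximation is what the paper uses to prove \emph{existence} — i.e., that the integral curve of $\Xi$ actually produces a classical solution of \eqref{KdV} (by passing to the limit in the integrated KdV equation for the approximants $u_N$) and that the map $B$ from reflectionless potentials to Dirichlet data is onto (Prop.~\ref{P4a}). Your phrase ``matching $\zeta$ with the Lax-pair deformation'' does not supply this.

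A second, more technical gap is at the gap edges. For uniqueness you need that the Dirichlet data of an \emph{arbitrary} bounded solution satisfy $\partial_x\varphi=\Psi(\varphi)$ and $\partial_t\varphi=\Xi(\varphi)$ for \emph{all} $(x,t)$, not merely where $\mu_j$ lies in the open gap; the implicit-function-theorem computation from the diagonal Green's function breaks down when $\mu_j$ hits $E_j^{\pm}$. The paper devotes Prop.~\ref{nonpausingprop} (non-pausing of Dirichlet eigenvalues, via limits of Weyl solutions and renormalized oscillation theory) and the corresponding $t$-differentiability argument in Section~\ref{sectionKdV} to exactly this point, and only then can Picard uniqueness for the Lipschitz fields be applied. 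Your Picard argument is fine once this is in place, and your sketches of parts (ii) and (iii) are consistent with the paper's conclusion, but as written the proposal assumes the two hardest ingredients (simultaneous linearization of both flows on $\mathbb{T}^J$, and validity of the Dubrovin equations through the gap edges) rather than proving them.
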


We recall that the frequency module of an almost periodic function $V$ is the $\mathbb{Z}$-module generated by the set of all $\lambda\in\mathbb{R}$ such that
\[
\lim_{L\to\infty} \frac 1{2L} \int_{-L}^L V(x) e^{-i\lambda x} dx \neq 0.
\]

The condition \eqref{boundedness} in Theorems~\ref{TQ} and \ref{Tap} is assumed only in order to use the results of Rybkin~\cite{Ry}; as discussed in Remark 2 on page 9 of \cite{Ry}, this condition can be relaxed.

A construction of Cohen--Kappeler~\cite{CK} shows nonuniqueness of the Cauchy problem \eqref{KdV},\eqref{KdVinitial} even for $V=0$ on some domains of the form $\{(x,t)\in\mathbb{R}^2 \vert 0 \le t < h(x)\}$ with $h$ a strictly positive, increasing function of $x$.

An affirmative answer to Deift's problem was already known in some cases. A class of ``algebro--geometric" solutions of the KdV equation was constructed in the 1970s, resulting from the same theory which solved the periodic case. These solutions correspond to almost periodic Schr\"odinger operators with finitely many gaps in the spectrum, so they are often called finite zone or finite gap solutions. Due to having finitely many gaps, their frequency module under any linear flow is finitely generated, so they are quasiperiodic in both $x$ and $t$. We emphasize, however, the difference between these finite gap solutions and our quasiperiodic initial data of Theorem~\ref{TQ}, which generically have infinitely many gaps in the spectrum.

A result of Egorova \cite{Eg} goes beyond the finite gap case to construct almost periodic solutions with gap sizes obeying a superexponential decay condition in a parameter resembling the period of periodic approximants. This result applies to a class of limit-periodic potentials previously studied by Chulaevsky~\cite{Ch} and Pastur--Tkachenko~\cite{PT1,PT2}, which can be approximated superexponentially in the period by periodic potentials. Namely, $V$ is assumed to have $a_n$-periodic potentials $V_n$ such that for any $b> 0$,
\[
\lim_{n\to \infty} e^{b a_{n+1}} \sup_{x\in\mathbb{R}} \lVert V - V_n \rVert_{W^{4,2}((x,x+1))} = 0.
\]
Due to the superexponential rate of approximation, this analysis can obtain convergence from various estimates exponential in the period. This approach would not apply in our setting.

While Theorem~\ref{TQ} is direct in the sense that it derives the desired conclusion from explicit assumptions about the initial datum, there is a sophisticated inverse spectral theory working in the background which makes this result possible. The inverse spectral theory depends on fundamental developments by Craig~\cite{Cr}, Sodin--Yuditskii~\cite{SY}, Remling~\cite{R07}, centered on the notion of reflectionless Schr\"odinger operators. Another fundamental development is due to Rybkin~\cite{Ry}, who studied the time evolution of solutions of \eqref{KdV} in terms of the Weyl $M$-matrices of the associated family of Schr\"odinger operators; since the $M$-matrix determines a Schr\"odinger operator uniquely, Rybkin's approach provides much more information about the solution than what can be concluded merely from unitary equivalence of operators.

We wish to emphasize that Theorem~\ref{Tap} is not itself a small coupling result, even though its corollary, Theorem~\ref{TQ}, is. It is natural to ask to what extent these properties persist as the coupling constant $\ve$ is increased. In this regard, a particularly well understood example is the almost Mathieu operator, a discrete Schr\"odinger operator on $\ell^2(\mathbb{Z})$ given by $(H_\lambda u)_n = u_{n-1} + u_{n+1} + 2 \lambda \cos(2 \pi(n\omega + \theta)) u_n$, where $\lambda >0$, $\omega \in \mathbb{R} \setminus \mathbb{Q}$, $\theta \in \mathbb{R}$. The almost Mathieu operator goes through a transition from absolutely continuous to singular spectrum as the coupling constant $\lambda$ goes through $1$. Moreover, the Lebesgue measure of the spectrum is $4\left\lvert  \lambda - 1 \right\rvert$, so $\lambda=1$ is also a critical point in that sense. Many authors have contributed to these results; see the recent survey papers \cite{Da, JiMa} and references therein. Recently, it was shown in \cite{DGSV} that the spectrum of the almost Mathieu operator is homogeneous whenever $\lambda \neq 1$ (under a Diophantine condition for $\omega$). This is encouraging evidence that for analytic quasiperiodic operators, the assumptions of Theorem~\ref{Tap} may hold as we increase the coupling constant, up to a critical value where there is a transition in the spectral type. However, this is currently in the domain of speculation.

More specifically, our work serves as motivation for one to work out a continuum analogue of Avila's global theory for discrete one-frequency analytic quasi-periodic Schr\"odinger operators \cite{Av1, Av2, Av3}, especially of his results in the subcritical regime. That is, one should classify cocycle behavior in the continuum case in the same way as Avila does in the discrete case, namely as subcritical/critical/supercritical, and prove that subcritical cocycle behavior leads to almost reducibility and absolutely continuous spectrum. The ultimate goal would be to show that our results apply throughout the subcritical regime. Of course, one would also have to establish the necessary structural properties for the spectrum as a set, and here the result from \cite{DGSV} suggests that these properties may indeed be typical in the subcritical regime.

While the previous paragraph describes ways in which the known results in the discrete case may inform a study of the continuum case, one can also turn this around and attempt to advance the theory in the discrete case by implementing ideas that have proved to be useful in the continuum case. For example, given the results obtained in the present paper, the following natural question arises: considering the Toda lattice,
\begin{equation}\label{toda}
\begin{split}
\dot{a}(n,t) &=a(n,t)(b(n+1,t)-b(n,t)),\\
\dot{b}(n,t) & =2(a^2(n,t)-a^2(n-1,t)),
\end{split}
\end{equation}
so that the initial data $(a(\cdot,0), b(\cdot,0))$ are almost periodic and the associated Jacobi matrix is reflectionless and its spectrum is a regular Parreau--Widom set and satisfies suitable Craig-type conditions, is it then true that there is a global solution that is almost periodic in time? Based on the close analogy between the KdV equation and the Toda lattice, as well as our Theorem~\ref{Tap}, we expect an affirmative answer to this question. Many of the key ingredients are already in place; for example, Remling has shown the reflectionlessness of almost periodic Jacobi matrices with fully supported absolutely continuous spectrum \cite{R11} and the invariance of reflectionlessness under the Toda flow \cite{R15}, and Sodin and Yuditskii's work exists in the discrete setting as well \cite{SY1}.

An attractive special case is of course given by almost Mathieu initial data,
$$
a(n,0) = 1, \quad b(n, 0) = 2 \lambda \cos(2 \pi (n\omega + \theta)).
$$
This special case is particularly interesting because, as indicated above, the spectral analysis of the initial data is almost complete. In particular, the phase transition at $\lambda = 1$ is understood on a variety of levels: Lebesgue measure of the spectrum, quantum transport, spectral type, behavior of (generalized) eigenfunctions, and more concretely the behavior of the underlying cocycles. On the one hand, this suggests that one may hope that by working out the discrete analogue of our work, one can establish the desired result on almost periodicity in time throughout the subcritical region (i.e., for $\lambda < 1$). On the other hand, this case exemplifies the need for significant additional ideas to penetrate the supercritical region ($\lambda > 1$). Of course, it is known that in the supercritical region, the initial data will not be reflectionless. However, the structure of the spectrum as a set is the same as in the subcritical region; indeed, the spectra corresponding to coupling constants $\lambda$ and $\lambda^{-1}$ are related via linear scaling due to Aubry duality. This shows clearly that isospectrality alone is insufficient to get a good understanding of the evolution (even with additionally imposed Craig-type estimates). At the present time, it is unclear how to define the appropriate ``isospectral torus'' in the supercritical regime, or more generally when reflectionlessness fails. The critical regime (in the sense of Avila, corresponding to the case of critical coupling, $\lambda = 1$, in the case of the almost Mathieu operator) poses additional challenges due to the critical nature of the associated cocycles and the degenerate structure of the spectrum.

While the critical regime is atypical in the class of discrete Schr\"odinger operators with analytic one-frequency quasi-periodic potentials, the features of the critical regime, such as spectra of zero Lebesgue measure and purely singular continuous spectral measures, are actually generic in the continuous category (i.e., varying the sampling function in $C^0$ while keeping the base transformation fixed). For example, it was shown by Avila \cite{Av} in the discrete case and then, based on his ideas, by Damanik--Fillman--Lukic \cite{DFL} in the continuum case that the spectrum of a generic bounded continuous limit-periodic operator has Lebesgue measure $0$. A result of Avila--Damanik~\cite{AD} shows that for discrete quasiperiodic Schr\"odinger operators, singular spectrum (and hence the absence of reflectionlessness!) is generic, and this suggests that the same is true for continuum quasiperiodic Schr\"odinger operators.\footnote{We expect that it is not too hard to work out a continuum analog of this particular result from \cite{AD}. Given the obvious relevance to the scope of our work in the context of the KdV equation, it would be nice to see this extension be worked out explicitly.} (It is conjectured that zero-measure spectrum is generic in this setting as well; however, this is not yet known, see \cite{ADZ} for a partial result.) Finally, for discrete Schr\"odinger operators, the absence of point spectrum was also shown to be generic in the continuous category by Boshernitzan-Damanik \cite{BD}. These results suggest that a complete answer to Deift's problem will require the development of robust new tools in inverse spectral theory.

In any event, the spectral analysis of almost periodic Schr\"odinger operators and Jacobi matrices is in general very complicated and just having a Lax pair is not enough to integrate the evolution equation (i.e., the KdV equation and the Toda lattice, respectively)

As a final remark in this introduction, let us point out that we obtain the almost periodicity of the solution $u$ as a function on $\mathbb{R}^2$. Namely, instead of writing $u(\cdot, t) = \mathcal{M}(\alpha+\zeta t)$ as in Theorem~\ref{Tap}, we can also write $u(x,t) = F(\alpha + \delta x + \zeta t)$ with a continuous function $F : \mathbb{T}^J \to \mathbb{R}$. This follows from our proof of the identity $u(\cdot, t) = \mathcal{M}(\alpha+\zeta t)$ (along with a suitable composition with the Dubrovin flow); compare Section~\ref{sec.5}.

Sections~\ref{sec.2} and \ref{sectionKdV} prepare the stage for the uniqueness part of Theorem~\ref{Tap}: Section~\ref{sec.2} starts with the necessary definitions and reviews and expands previous results on the evolution of Dirichlet data of a reflectionless operator under translation, and Section~\ref{sectionKdV} develops a similar description for the evolution of Dirichlet data under the KdV flow. Section~\ref{sec.4} is devoted to the existence part of Theorem~\ref{Tap}, and Section~\ref{sec.5} to the almost periodicity of the KdV flow. Section~\ref{sec.5} also contains the proof of Theorem~\ref{Tap}, based on all the previous material. Finally, Section~\ref{sec.6} uses Theorem~\ref{Tap} to prove Theorem~\ref{TQ}.

\section{The Translation Flow and Non-Pausing of Dirichlet Eigenvalues}\label{sec.2}

In this section, we discuss the translation flow applied to a reflectionless Schr\"odinger operator with spectrum $S$ obeying \eqref{Craig1}. In particular, this flow can be expressed in terms of the Dirichlet data of the operator. We begin by introducing the notion of reflectionlessness and reviewing results of Craig~\cite{Cr}, after which we investigate the behavior of Dirichlet data at gap edges to give a new characterization of points where a Dirichlet eigenvalue goes through a gap edge and, in particular, to explain why Dirichlet eigenvalues don't pause at gap edges.

For $z \in \mathbb{C} \setminus \sigma(H_W)$, the second order differential equation
\[
- y'' + W y = z y
\]
has nontrivial solutions $\psi_\pm(x;z)$, called Weyl solutions, such that $\psi_\pm(x;z) \in L^2([0,\pm\infty),dx)$. In terms of the Weyl solutions, one can express the half-line $m$-functions associated with the half-line restrictions of $H_W$ to $[x,\pm \infty)$ with a Dirichlet boundary condition at $x$ by
\[
m_\pm(x;z) = \frac{\psi'_\pm(x;z)}{\psi_\pm(x;z)}.
\]
For each $x$, these are meromorphic functions of $z\in \mathbb{C} \setminus \sigma(H_W)$.

The diagonal Green's function associated with $H_W$ is given by
\[
G(x,x;z) = \frac{\psi_-(x;z) \psi_+(x;z)}{\psi'_-(x;z) \psi_+(x;z) - \psi'_+(x;z) \psi_-(x;z)} = \frac 1{m_-(x;z) - m_+(x;z)},
\]
and it is an analytic function of $z\in \mathbb{C} \setminus \sigma(H_W)$ for each $x$.

The function $G(x,x;z)$ has nontangential limits $G(x,x;y+i0)$ for Lebesgue-a.e.\ $y\in \mathbb{R}$, and $H_W$ is called reflectionless if
\[
\Re G(x,x;y+i0) = 0 \quad \text{for Lebesgue-a.e.\ }y \in \sigma(H_W).
\]
It is known \cite{SY} that this definition is independent of the choice of $x$. We denote the set of reflectionless operators with spectrum $S$ by $\mathcal{R}(S)$.

Recall the set $\mathcal{D}(S) = \mathbb{T}^J$ with the metric \eqref{metricCraig}. We introduce variables on $\mathcal{D}(S)$ given by
\begin{align}
\mu_j & = E_j^{-} + (E_j^{+} - E_j^{-}) \cos^2 (\varphi_j / 2) \label{mufromvarphi} \\
\sigma_j & = \begin{cases} + & \varphi_j \in (0,\pi) + 2\pi \mathbb{Z} \\
- & \varphi_j \in (-\pi,0) + 2\pi \mathbb{Z} \\
0 & \varphi_j \in \pi \mathbb{Z}
\end{cases} \label{sigmafromvarphi}
\end{align}
On $\mathcal{D}(S)$, we introduce a scalar field $Q_1$,
\[
Q_1(\varphi) = \underline E + \sum_{j\in J} (E_j^- + E_j^+ - 2\mu_j)
\]
and a vector field $\Psi$, with components
\[
\Psi_j(\varphi) = 2 \left( (\mu_j - \underline{E})   \prod_{l\neq j} \frac{(E_l^- - \mu_j )(E_l^+ - \mu_j ) } {(\mu_l - \mu_j )^2} \right)^{1/2}.
\]
The tangent space of a point on $\mathcal{D}(S)$ will be equipped with the norm
\begin{equation}\label{tangentspacenorm}
\lVert v \rVert = \sup_{j\in J} \gamma_j^{1/2} \lvert v_j \rvert
\end{equation}
and vector fields on $\mathcal{D}(S)$ will be equipped with the $\sup$-norm obtained from \eqref{tangentspacenorm}.

The motivation for this setup comes from the Dirichlet data of a reflectionless operator on $S$.  Let $W$ be reflectionless and $\sigma(H_W) = S$. Following the definition of Craig \cite{Cr}, the Dirichlet data $\mu_j(x)$ are defined as zeros of the diagonal Green's function in the gaps. More precisely, using the observation that $G(x,x;z)$ is strictly increasing for $z\in (E_j^-, E_j^+)$, Craig defines
\begin{equation}\label{dirichleteigenvalueat0}
\mu_j(x) = \begin{cases}
z \in (E_j^-, E_j^+) & G(x,x;z) = 0 \\
E_j^+ & G(x,x;z) < 0\text{ for all }z \in (E_j^-, E_j^+) \\
E_j^- & G(x,x;z) > 0\text{ for all }z \in (E_j^-, E_j^+)
\end{cases}
\end{equation}
If $\mu_j(x) \in (E_j^-, E_j^+)$, the sign $\sigma_j(x) \in \{+, -\}$ is also defined. It is determined in terms of which of the half-line $m$-functions $m_+$, $m_-$ has a pole at $\mu_j$; namely, so that $m_{\sigma_j}$ has a pole at $\mu_j$. This definition immediately implies that for any choice of $z \in (E_j^-, E_j^+)$ and $\sigma \in \{+,-\}$,
\begin{equation}\label{zerosofweylsolution}
\{x \in \mathbb{R} \mid \mu_j(x) = z, \sigma_j(x) = \sigma\} = \{ x \in \mathbb{R} \mid \psi_\sigma(x;z) = 0\}
\end{equation}
where $\psi_\pm(x;z)$ are the Weyl solutions at the point $z \in (E_j^-, E_j^+)$.

One then also defines angular coordinates $\varphi_j(x) \in \mathbb{T}$ implicitly by \eqref{mufromvarphi}, \eqref{sigmafromvarphi}.

We therefore have, corresponding to the potential $W$, a trajectory $\varphi(x)$ on $\mathcal{D}(S)$. In particular, if we fix $x=0$, we obtain a map $B: \mathcal{R}(S) \to \mathcal{D}(S)$ given by
\begin{equation}\label{mapB}
B(W) = \varphi(0).
\end{equation}
Craig~\cite{Cr} showed the following:
\begin{enumerate}[(i)]
\item As a function of $x\in\mathbb{R}$, $\varphi(x)$ is a continuous trajectory on $\mathcal{D}(S)$.
\item The potential can be recovered from this trajectory by the trace formula
\[
W(x) = Q_1(\varphi(x)).
\]
\item The diagonal Green's function of $H_W$ is given by the product formula
\begin{equation}\label{greensfunctionproductformula}
G(x,x;z) = \frac 12 \sqrt{ \frac 1{\underline E - z} \prod_{l\in J} \frac{(\mu_l(x) -z)^2 }{(E_l^- - z)(E_l^+ - z)}  }
\end{equation}
with the branch of square root chosen so that $G(x,x;z)$ is analytic in $z\in \mathbb{C} \setminus S$ and $G(x,x;z)>0$ for $z< \underline E$. In particular,
\begin{equation}\label{greensfunctionderivativeatmu}
\left. \partial_z G(x,x;z) \right\rvert_{z=\mu_j(x)} = \frac 12 \sqrt{ \frac 1{(\underline E - \mu_j)(E_j^- - \mu_j)(E_j^+ - \mu_j)} \prod_{l\neq j} \frac{(\mu_l -\mu_j)^2 }{(E_l^- - \mu_j)(E_l^+ - \mu_j)}  }.
\end{equation}
\item $\Psi$ is a Lipshitz vector field on $\mathcal{D}(S)$.
\item Fix $j\in J$. At any point $x\in \mathbb{R}$ at which $\mu_j(x) \in (E_j^-, E_j^+)$, the function $\varphi_j(x)$ is differentiable and
\begin{equation}\label{translationflowj}
\partial_x \varphi_j(x) = \Psi_j(\varphi(x)).
\end{equation}
\item If $\mathcal{R}(S)$ is equipped with the topology of uniform convergence on compacts, the map $B$ is continuous.
\end{enumerate}

We wish to clarify the situation at the gap edges, with respect to (v) above. We will show that the characterization \eqref{zerosofweylsolution} has an analog for gap edges, although there are no Weyl solutions at a gap edge. In particular, this will show that $\mu_j(x)$ does not pause at a gap edge and that $\varphi_j(x)$ is differentiable and \eqref{translationflowj} holds at all $x$.

\begin{prop}\label{nonpausingprop} Assume that $S$ obeys \eqref{Craig1} and $W \in \mathcal{R}(S)$. Fix a gap $(E_j^-,E_j^+)$ and let $E \in\{ E_j^-, E_j^+\}$. There is a nontrivial solution $\tilde \psi$ of $H_W \tilde \psi = E \tilde \psi$ which is the limit of Weyl solutions from the gap, in the following sense: there are normalizing constants $c_\pm(z) \in \mathbb{C} \setminus \{0\}$ for $z\in (E_j^-, E_j^+)$ such that
\[
\lim_{\substack{z \in (E_j^-,E_j^+) \\ z \to E}}
c_\pm(z) \psi_\pm(x;z) = \tilde\psi(x)
\]
uniformly on compacts in $x$. For this solution $\tilde\psi$, we have
\begin{equation}\label{dirichletdatanonpausing}
\{x \in \mathbb{R} \mid \mu_j(x) = E\} = \{ x\in \mathbb{R} \mid \tilde\psi(x) =0\}.
\end{equation}
In particular, this set is discrete.
\end{prop}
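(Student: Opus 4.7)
Fix $E \in \{E_j^-, E_j^+\}$. The plan is to construct $\tilde\psi$ as a common limit of suitably normalized Weyl solutions and then to identify its zero set via the product formula \eqref{greensfunctionproductformula}.

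Choose a base point $x_* \in \mathbb{R}$ with $\mu_j(x_*) \neq E$; the existence of such $x_*$ follows a posteriori from the nontriviality of $\tilde\psi$, or can be argued directly by observing that $\mu_j \equiv E$ would force $G(x,x;E) \equiv 0$ in conflict with the reflectionless Parreau--Widom structure. Normalize $\psi_\pm(x_*;z) = 1$ so that
\[
\psi_\pm(x;z) = u_1(x;z) + m_\pm(x_*;z) u_2(x;z),
\]
where $u_1, u_2$ are the standard fundamental solutions of $-y'' + W y = z y$ at $x_*$. Evaluating \eqref{greensfunctionproductformula} at $x_*$ and noting that the factor $(\mu_j(x_*) - z)^2$ does not cancel the simple zero of $(E - z)$ in the denominator, one reads off $1/G(x_*,x_*;z)^2 \sim c_0 (z - E)$ for some $c_0 > 0$; since $m_-(x_*;z) - m_+(x_*;z) = 1/G(x_*,x_*;z)$, this yields
\[
m_-(x_*;z) - m_+(x_*;z) \sim c_0^{1/2} \sqrt{z - E}, \qquad m_\pm(x_*;z) \to m(x_*;E)
\]
for a common finite limit $m(x_*;E) \in \mathbb{R}$. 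Continuity of $u_1, u_2$ in $z$ then yields uniform convergence on compacts in $x$ of $\psi_\pm(x;z)$ to the nontrivial solution $\tilde\psi(x) := u_1(x;E) + m(x_*;E) u_2(x;E)$, proving the first claim with $c_\pm(z) = 1$.

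For \eqref{dirichletdatanonpausing}, I rewrite $G(x,x;z) = \psi_-(x;z) \psi_+(x;z)/(m_-(x_*;z) - m_+(x_*;z))$. Using the expansion $m_\pm(x_*;z) = m(x_*;E) \mp \tfrac{1}{2} c_0^{1/2} \sqrt{z - E} + O(z - E)$, the $\sqrt{z-E}$ contributions cancel in the symmetric product, and a short calculation yields
\[
\psi_+(x;z)\psi_-(x;z) = \tilde\psi(x)^2 + O(z - E).
\]
Comparing with the denominator $\sim c_0^{1/2} \sqrt{z - E}$ shows that $G(x,x;z) \to \pm\infty$ when $\tilde\psi(x) \neq 0$ and $G(x,x;z) \to 0$ when $\tilde\psi(x) = 0$. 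On the other hand, the product formula \eqref{greensfunctionproductformula} itself gives the identical dichotomy according to whether $\mu_j(x) \neq E$ or $\mu_j(x) = E$. Hence $\tilde\psi(x) = 0$ if and only if $\mu_j(x) = E$, which is \eqref{dirichletdatanonpausing}. Discreteness of the set is then immediate, since a nontrivial solution of a second-order linear ODE with continuous coefficients has isolated zeros.

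The main technical point is the extraction of the square-root branching of $m_-(x_*;z) - m_+(x_*;z)$ at the gap edge directly from \eqref{greensfunctionproductformula}; once that is in hand, everything reduces to matching rates of vanishing in the quotient defining $G(x,x;z)$. The degenerate case $\mu_j(x_*) = E$ can be handled by the same scheme after rechoosing the normalizing constants $c_\pm(z)$ to absorb a $(z-E)^{-1/2}$ singularity of $m_\pm(x_*;z)$.
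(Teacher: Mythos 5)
Your overall strategy---normalize the Weyl solutions at a base point $x_*$ with $\mu_j(x_*)\neq E$, deduce from the product formula that the Wronskian $m_-(x_*;z)-m_+(x_*;z)=1/G(x_*,x_*;z)$ vanishes like $\lvert z-E\rvert^{1/2}$, pass to the limit in $\psi_\pm=u_1+m_\pm(x_*;z)u_2$, and then identify the zero set by matching rates of vanishing in $G=\psi_-\psi_+/(m_--m_+)$---is genuinely different from the paper's proof (which extracts a subsequential limit via Pr\"ufer phases and compactness, and obtains the reverse inclusion $\{\tilde\psi=0\}\subset\{\mu_j=E\}$ from the renormalized oscillation theory of Gesztesy--Simon--Teschl), and it can be made to work. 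But as written it has a genuine gap at its crux. Your computation only controls the \emph{difference} $m_-(x_*;z)-m_+(x_*;z)$; the assertion that the two functions individually admit the \emph{symmetric} expansion $m_\pm(x_*;z)=m(x_*;E)\mp\tfrac12 c_0^{1/2}\sqrt{z-E}+O(z-E)$, so that the square-root terms cancel in $m_++m_-$, does not follow from this and is never justified. (Morally it reflects the fact that for a reflectionless operator $m_\pm$ are two branches of one function on the double cover, but establishing that at an individual gap edge under \eqref{Craig1} is itself a nontrivial claim.) Without it, at a point with $\tilde\psi(x)=0$ you only know $\psi_-\psi_+=o(1)$, so $G=o(1)/\Theta(\lvert z-E\rvert^{1/2})$ is indeterminate and the inclusion $\{\tilde\psi=0\}\subset\{\mu_j=E\}$ does not close. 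Even your earlier claim that $m_\pm(x_*;z)$ tend to a \emph{common finite} limit needs an argument: $m_--m_+\to0$ alone does not preclude both functions diverging or oscillating together.

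Both defects are repaired by one observation you never invoke: $m_+$ is Herglotz and $m_-$ is anti-Herglotz, so on $(E_j^-,E_j^+)$ both are real and monotone, and near $E$ the limit value is trapped between them. Monotonicity plus $m_--m_+\to0$ forces a common finite limit $m(x_*;E)$ and gives the one-sided bounds $\lvert m_\pm(x_*;z)-m(x_*;E)\rvert\le\lvert m_-(x_*;z)-m_+(x_*;z)\rvert=O(\lvert z-E\rvert^{1/2})$. This is weaker than your symmetric expansion but it suffices: it yields $\psi_\pm(x;z)-\tilde\psi(x)=O(\lvert z-E\rvert^{1/2})$ uniformly on compacts, so at a zero of $\tilde\psi$ the product $\psi_-\psi_+$ is $O(\lvert z-E\rvert)$, hence $G\to0$ and $\mu_j(x)=E$ by \eqref{limitzeroorinfinity}; at a non-zero of $\tilde\psi$ one has $\lvert G\rvert\to\infty$ as you say. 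You should also tighten the choice of $x_*$: the ``a posteriori'' remark is circular, and the clean argument (used in the paper) is that $\psi_-(\cdot;w)$ has infinitely many zeros for $w$ in the open gap, so $\mu_j$ attains the value $w\neq E$ somewhere. With these repairs your argument is complete and, pleasantly, delivers both inclusions of \eqref{dirichletdatanonpausing} at once, bypassing the paper's interlacing step.
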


As pointed out in \cite{Cr}, knowing continuity of $\varphi_j(x)$ and knowing that \eqref{translationflowj} holds away from a discrete set in $x$ implies that \eqref{translationflowj} holds for all $x$. Thus, Dirichlet data of $W$ are trajectories of the flow $\Psi$, solving
\begin{equation}\label{translationflowall}
\partial_x \varphi(x) = \Psi(\varphi(x)).
\end{equation}
Conversely, since $\Psi$ is Lipschitz, a solution of \eqref{translationflowall} is uniquely determined by its initial data $\varphi(0)$, and then so is $W$. Therefore, the map $B$ given by \eqref{mapB} is injective.

\begin{proof}[Proof of Prop.~\ref{nonpausingprop}]
Separating the term $l=j$ in the product formula \eqref{greensfunctionproductformula}, we write
\[
G(x,x;z) = \sqrt{\frac{(\mu_j(x) - z)^2}{(E_j^+ - z)(E_j^- -z)}} \; \frac 12 \sqrt{ \frac 1{\underline E - z} \prod_{l\neq j} \frac{(\mu_l (x) -z)^2 }{(E_l^- - z)(E_l^+ - z)}  }.
\]
The product over $l\neq j$ has a finite limit as $z \to E$ since the $l$-th term is bounded by $1 + \gamma_l / \eta_{l,j}$. We can therefore conclude that
\begin{equation}\label{limitzeroorinfinity}
\lim_{\substack{z \in (E_j^-,E_j^+) \\ z \to E}} \lvert G(x,x;z)\rvert = \begin{cases} 0 & \mu_j(x) = E \\
+\infty & \mu_j(x) \neq E
\end{cases}
\end{equation}

We normalize the Weyl solutions for $z\in (E_j^-, E_j^+)$ by making them real-valued and by requiring
\[
\lvert \psi_\pm(0;z)\rvert^2 + \lvert \partial_x \psi_\pm(0;z)\rvert^2 = 1.
\]
We introduce the Pr\"ufer phase $\theta_\pm(x;z) \in \mathbb{R} / \pi \mathbb{Z}$ by
\[
\partial_x \psi_\pm  \cos \theta_\pm -  \psi_\pm\sin \theta_\pm =0.
\]
Using compactness, select a sequence $(E_j^-,E_j^+) \ni z_k \to E$ so that the Pr\"ufer phases at $x=0$ stabilize, i.e., the limits
\[
\lim_{k\to\infty} \theta_\pm (0;z_k)
\]
exist. By stability of solutions of ODEs, we can then conclude that
$\psi_\pm(x;z_k)$ converges in the $C^1$ sense on compacts in $x$ to solutions $\tilde\psi_\pm$ of the equation
\[
H_W \tilde \psi_\pm = E \tilde\psi_\pm.
\]
In particular, then,
\begin{equation}\label{limitthetatilde}
\lim_{k\to\infty} \theta_\pm(x;z_k) = \tilde \theta_\pm(x)
\end{equation}
with $\tilde\theta_\pm(x) \in \mathbb{R} / \pi \mathbb{Z}$ determined by
\[
\partial_x \tilde \psi_\pm  \cos \tilde \theta_\pm -  \tilde \psi_\pm\sin  \tilde\theta_\pm =0.
\]

The diagonal Green's function can be written as
\[
G(x,x;z) = \frac{ \cos \theta_-(x;z) \cos\theta_+(x;z)}{\sin(\theta_-(x;z) - \theta_+(x;z))}
\]
so, by \eqref{limitzeroorinfinity}, we have
\begin{equation}\label{limitzeroorinfinity2}
\lim_{\substack{z \in (E_j^-,E_j^+) \\ z \to E}} \left\lvert \frac{ \cos \theta_-(x;z) \cos\theta_+(x;z)}{\sin(\theta_-(x;z) - \theta_+(x;z))}  \right\rvert =  \begin{cases} 0 & \mu_j(x) = E \\
+\infty & \mu_j(x) \neq E
\end{cases}.
\end{equation}
Since $w\in (E_j^-, E_j^+)$ is above the bottom of the essential spectrum, $\psi_-(x;w)$ has infinitely many zeros, so we can pick $x$ such that $\mu_j(x) = w \in (E_j^-, E_j^+)$. Then, from \eqref{limitzeroorinfinity2} and \eqref{limitthetatilde} we conclude that $\tilde \theta_-(x) = \tilde \theta_+(x)$ for that value of $x$; but then that is true for all $x\in \mathbb{R}$, i.e.\ $\tilde \psi_+ = c \tilde \psi_-$ for some $c\in \{-1,+1\}$. By a renormalization we can assume that $\tilde\psi_-=\tilde\psi_+$ and denote that solution by $\tilde\psi$ from now on.

Now assume that $x$ is such that $\mu_j(x) = E$. From \eqref{limitzeroorinfinity2} and \eqref{limitthetatilde}, we conclude that $\cos \tilde\theta(x) = 0$, that is, $\tilde\psi(x) = 0$. We have therefore shown that
\[
D: = \{x \in \mathbb{R} \mid \mu_j(x) = E\} \subset \{ x\in \mathbb{R} \mid \tilde\psi(x) =0\} =: \tilde D.
\]
In particular, we conclude that $D$ is discrete, since $\tilde D$ is.

Since $\varphi_j$ is continuous, \eqref{translationflowj} holds away from a discrete set, and $\Psi_j > 0$, we conclude that $\varphi_j$ is a strictly increasing function of $x$. Therefore, $D$ strictly interlaces the set of zeros of $\psi_-(x;w)$ for  $w\in (E_j^-, E_j^+)$.

We will now show that $\tilde D$ strictly interlaces the same set, using the renormalized oscillation theory of Gesztesy--Simon--Teschl~\cite{GST}. By Theorem~1.3 of \cite{GST}, if $z, w$ are such that $(\min(z,w), \max(z,w))\cap S = \emptyset$, then the Wronskian of $\psi_-(x;w)$ and $\psi_+(x;z)$ is nonzero for all $x$. This is equivalent to saying that $\theta_-(x;w) - \theta_+(x;z) \notin \pi \mathbb{Z}$ for all $x$.

Thus, if we lift $\theta_\pm$ to continuous $\mathbb{R}$-valued functions, there is an $n\in \mathbb{Z}$ such that for all $x$,
\[
n\pi < \theta_-(x;w) - \theta_+(x;z) < (n+1)\pi.
\]
By continuity in $z\in (E_j^-,E_j^+)$, $n$ is independent of $z$. Taking $z =z_k$ and taking the limit $z_k\to E$ we obtain
\[
n\pi \le \theta_-(x;w) - \tilde\theta(x) \le (n+1)\pi
\]
However, by Prop.~3.4 of \cite{GST}, the function $\theta_-(x;w) - \tilde\theta(x)$ cannot have a local extremum where its value is in $\pi \mathbb{Z}$, so the previous estimates strengthen to
\[
n\pi < \theta_-(x;w) - \tilde\theta(x) < (n+1) \pi
\]
which means that the zeros of $\psi_-(x;w)$ and of $\tilde\psi(x)$ strictly interlace.

Knowing that $D\subset \tilde D$ and that each of those sets strictly interlaces the set of zeros of $\psi_-(x;w)$  proves that $D=\tilde D$, since $\psi_-(x;w)$ has zeros on any half-line in $\mathbb{R}$.

Finally, since \eqref{dirichletdatanonpausing} determines $\tilde \psi$ uniquely (up to a multiplicative factor), we conclude that the limit $\tilde\psi$ is independent of subsequence $z_k \to E$.
\end{proof}

\section{A Dubrovin-Type Formula for the KdV Flow}\label{sectionKdV}

In this section, we derive a formula for the time evolution of Dirichlet data for the KdV flow. This relies on ideas and results of Craig \cite{Cr}, discussed in the previous section, and of Rybkin \cite{Ry}, who derived the time evolution of the Weyl $M$-matrix for the KdV flow.

We will consider a solution $u(x,t): \mathbb{R}^2 \to \mathbb{R}$ of the initial value problem \eqref{KdV}, \eqref{KdVinitial}. As it has been mentioned above, by the Lax pair formalism, the spectrum $S = \sigma(H_{u(\cdot, t)})$ is independent of $t$. A much more recent result of Rybkin \cite{Ry} shows that the reflectionless property is preserved along the KdV flow. More precisely, \cite{Ry} assumes that $u(x,t)$ is a solution of \eqref{KdV} such that $u$, $\partial_x u$, $\partial_t u \in L^\infty(\mathbb{R} \times [0,T])$ for some $T>0$, and proves the following:
\begin{enumerate}[(i)]
\item The Weyl $M$-matrix is defined as
\[
M = \begin{pmatrix} \frac{m_- m_+}{m_- - m_+} & \frac 12 \frac{m_- + m_+}{m_- - m_+} \\
\frac 12 \frac{m_- + m_+}{m_- - m_+} & \frac{1}{m_- - m_+}
\end{pmatrix}
\]
where  $m_\pm(x,t;z)$ are the Weyl $m$-functions corresponding to $u(\cdot, t)$ on $[x, \pm \infty)$. The $M$-matrix evolves by
\begin{equation}\label{Rybkintimeevolution}
\partial_t M  = P M + M P^T
\end{equation}
where $P(x,t;z)$ is given by
\[
P = \begin{pmatrix}
\partial_x u   & 2 (u-z)(u+2z) - \partial_{x}^2 u \\
2 (u+2z)  & - \partial_x u
\end{pmatrix}
\]
This time evolution is valid for all $z\in \mathbb{C} \setminus \sigma(H_V)$. It is also valid in the sense of the nontangential limit $M(x,t;z+i0)$ for those $z\in \sigma(H_V)$ for which that nontangential limit exists for $t=0$.
\item For Lebesgue-a.e.\ $z \in \sigma_\ac(H_V)$, the value of the reflection coefficient
\[
\left \lvert \frac{\overline{m}_+ - m_-}{m_+ - m_-} \right\rvert
\]
is independent of $t\in [0,T]$.
\end{enumerate}

The condition that the reflection coefficient be equal to $0$ for a.e.\ $z \in S$ is equivalent to reflectionlessness of the potential; see \cite{SY}. In particular, Rybkin's results imply that if $V \in \mathcal{R}(S)$ and $S = \sigma_\ac(H_V)$, then $u(\cdot,t) \in \mathcal{R}(S)$ for all $t\in [0,T]$.

Since $u(\cdot, t)$ is reflectionless for all $t$, we can introduce the corresponding Dirichlet data $\varphi(x,t)$ as in the previous section.

Recall from the previous section the torus $\mathcal{D}(S)$ and, on it, the scalar field $Q_1$ and vector field $\Psi$. We now introduce the vector field $\Xi$ on $\mathcal{D}(S)$, with components
\[
\Xi_j(\varphi) =  -2 (Q_1 + 2 \mu_j ) \Psi_j.
\]

\begin{prop}
If $S$ obeys \eqref{Craig2}, then $\Xi$ is a Lipshitz vector field on $\mathcal{D}(S)$.
\end{prop}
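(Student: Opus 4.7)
I would exploit the product structure $\Xi_j = -2(Q_1+2\mu_j)\Psi_j$ via the Leibniz-type decomposition
\[
\Xi_j - \tilde\Xi_j = -2(Q_1+2\mu_j)(\Psi_j - \tilde\Psi_j) - 2\bigl[(Q_1-\tilde Q_1) + 2(\mu_j-\tilde\mu_j)\bigr]\tilde\Psi_j,
\]
and estimate both terms in the tangent-space norm $\lVert v\rVert = \sup_j \gamma_j^{1/2}|v_j|$. The target is a uniform bound $\sup_j \gamma_j^{1/2}|\Xi_j - \tilde\Xi_j| \le L\,\lVert\varphi-\tilde\varphi\rVert_{\mathcal{D}(S)}$.

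First I would collect elementary size and Lipschitz estimates that together control the \emph{second} term of the decomposition. The inclusion $\mu_l \in [E_l^-, E_l^+]$ combined with $\sum_l \gamma_l<\infty$ gives $|Q_1|$ bounded, and together with $|\mu_j|\le |\underline E|+\eta_{j,0}+\gamma_j$ this yields $|Q_1+2\mu_j|\le C(1+\eta_{j,0})$. From the product formula $\tilde\Psi_j\le 2\tilde C_j$, and the bound $\sup_j \gamma_j C_j < \infty$ from \eqref{Craig1} (implied by \eqref{Craig2}) gives $\sup_j \gamma_j^{1/2}\tilde\Psi_j<\infty$. The parametrization $\mu_l = \tfrac12(E_l^++E_l^-)+\tfrac{\gamma_l}{2}\cos\varphi_l$ implies $|\mu_l-\tilde\mu_l|\le \tfrac12\gamma_l^{1/2}\lVert\varphi-\tilde\varphi\rVert_{\mathcal{D}(S)}$, so in particular $|\mu_j-\tilde\mu_j|\le\tfrac12\gamma_j^{1/2}\lVert\varphi-\tilde\varphi\rVert_{\mathcal{D}(S)}$, and by summation $|Q_1-\tilde Q_1|\le (\sum_l\gamma_l^{1/2})\lVert\varphi-\tilde\varphi\rVert_{\mathcal{D}(S)}$, finite by the first condition in \eqref{Craig2}. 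Together these handle the second bracket of the split after multiplication by $\gamma_j^{1/2}$.

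The principal task (and the main obstacle) is the weighted Lipschitz estimate
\[
\gamma_j^{1/2}(1+\eta_{j,0})|\Psi_j-\tilde\Psi_j| \le L\,\lVert\varphi-\tilde\varphi\rVert_{\mathcal{D}(S)},\qquad \forall j\in J,
\]
which, combined with $|Q_1+2\mu_j|\le C(1+\eta_{j,0})$, controls the first bracket. Following the spirit of Craig's proof that $\Psi$ is Lipschitz under \eqref{Craig1}, I would integrate along the straight-line path $\varphi(s)=\tilde\varphi + s(\varphi-\tilde\varphi)$ and apply the chain rule. The partial derivatives of $\Psi_j$ have explicit form: $\partial_{\mu_k}\Psi_j = -\Psi_j/(\mu_k-\mu_j)$ for $k\neq j$, while $|\partial_{\mu_j}\Psi_j|\le (\Psi_j/2)\bigl[1/\eta_{j,0}+2\sum_{l\ne j}\gamma_l/\eta_{j,l}^2\bigr]$. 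Using $\Psi_j\le 2C_j$, $|\mu_k-\mu_j|\ge\eta_{j,k}$, $|d\mu_k/d\varphi_k|\le\gamma_k/2$ and $|\varphi_k-\tilde\varphi_k|\le\gamma_k^{-1/2}\lVert\varphi-\tilde\varphi\rVert$, the off-diagonal ($k\ne j$) contribution reduces to a multiple of $\sum_{l\ne j}\tfrac{\gamma_j^{1/2}\gamma_l^{1/2}}{\eta_{j,l}}(1+\eta_{j,0})C_j$, bounded by the $a=1$ case of the last condition in \eqref{Craig2}. The diagonal ($k=j$) contribution splits into a piece controlled by $\gamma_j^{1/2}\cdot\gamma_j^{1/2}\tfrac{1+\eta_{j,0}}{\eta_{j,0}}C_j$ (the second condition in \eqref{Craig2}) and a piece of the form $(1+\eta_{j,0})C_j\sum_{l\ne j}\bigl(\tfrac{\gamma_j^{1/2}\gamma_l^{1/2}}{\eta_{j,l}}\bigr)^2$, which I expect to close by interpolating via Cauchy--Schwarz (writing $a_l^2 = a_l^{1/2}\cdot a_l^{3/2}$) between the $a=1/2$ and $a=1$ conditions in \eqref{Craig2}. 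The extra factor $(1+\eta_{j,0})$ forced by the size of $|Q_1+2\mu_j|$ is precisely the reason why Craig's conditions \eqref{Craig1} no longer suffice and the strengthened conditions \eqref{Craig2} appear; verifying the interpolation in this last piece is the subtlest step of the argument.
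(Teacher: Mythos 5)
Your overall strategy coincides with the paper's: the paper differentiates $\Xi_j=-2(Q_1+2\mu_j)\Psi_j$ in each $\varphi_k$ and bounds $\sup_j\sum_k\gamma_j^{1/2}\gamma_k^{-1/2}\lVert\partial\Xi_j/\partial\varphi_k\rVert_\infty$, which is exactly the derivative form of your Leibniz decomposition; your off-diagonal term matches the $a=1$ case of the third condition in \eqref{Craig2} and your "second bracket" reproduces the paper's $\gamma_j^{1/2}C_j\sum_k\gamma_k^{1/2}$ piece. Two points, one minor and one substantive. Minor: $\sup_j\gamma_j^{1/2}\tilde\Psi_j<\infty$ does \emph{not} follow from $\sup_j\gamma_jC_j<\infty$ (that only controls $\gamma_jC_j$, and $\gamma_j^{1/2}C_j=\gamma_jC_j\cdot\gamma_j^{-1/2}$ blows up as $\gamma_j\to0$); you should instead invoke the second condition in \eqref{Craig2}, which dominates $\gamma_j^{1/2}C_j$ since $(1+\eta_{j,0})/\eta_{j,0}\ge1$.

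The substantive issue is the diagonal term, which you correctly identify as the crux but do not actually close. Your term-by-term bound $|\partial_{\mu_j}\Psi_j|\le(\Psi_j/2)\bigl[1/\eta_{j,0}+2\sum_{l\ne j}\gamma_l/\eta_{j,l}^2\bigr]$ leads to the quantity $(1+\eta_{j,0})C_j\sum_{l\ne j}a_l^2$ with $a_l=\gamma_j^{1/2}\gamma_l^{1/2}/\eta_{j,l}$, and the proposed ``interpolation'' between the $a=1/2$ and $a=1$ conditions cannot work as stated: $p=2$ lies \emph{outside} the interval $[1/2,1]$, so log-convexity of $p\mapsto\sum_l a_l^p$ gives extrapolation in the wrong direction, and the alternative route $\sum_l a_l^2 w_j\le(\sup_l a_l)\sum_l a_l w_j$ requires a uniform bound on $a_l$ which is not among the hypotheses (the weight $w_j=(1+\eta_{j,0})C_j$ is not bounded below, since $C_j\ge(\eta_{j,0}+\gamma_j)^{1/2}$ can be small). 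The paper avoids ever producing the exponent $2$: it bounds the full combination $\frac{1}{E_k^--\mu_j}+\frac{1}{E_k^+-\mu_j}-\frac{2}{\mu_k-\mu_j}$ as a single quantity --- it is monotone in $\mu_k\in[E_k^-,E_k^+]$, hence bounded by its values at the gap edges, giving $\frac{\gamma_k}{\eta_{j,k}(\eta_{j,k}+\gamma_k)}$ with the saturating factor $\frac{\gamma_k}{\eta_{j,k}+\gamma_k}\le1$ in place of your $\frac{2\gamma_k}{\eta_{j,k}^2}$. You should replace your term-by-term estimate with this one and then verify the resulting sum against \eqref{Craig2} (the paper asserts this last finiteness without detail, so you will still need to supply that verification, but you will be starting from a genuinely smaller quantity).
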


\begin{proof}
We begin by noting that each component $\Xi_j$ is bounded as
\[
\lVert \Xi_j \rVert_\infty \le 4 (\lVert Q_1 \rVert_\infty + 2 \lvert \underline E \rvert + 2 \eta_{j,0} ) C_j
\]
Next, each component $\Xi_j$ has partial derivatives given by
\begin{align*}
\frac{\partial \Xi_j}{\partial \varphi_k} & = \left(  \frac {Q_1 + 2 \mu_j}{\mu_k - \mu_j} - 2 \right) \Psi_j  \gamma_k \sin\varphi_k, \qquad k\neq j \\
\frac{\partial \Xi_j}{\partial \varphi_j} & = - \frac 14 \left( \frac 1{\underline E - \mu_j} + \sum_{k\neq j} \left( \frac 1{E_k^- - \mu_j} + \frac 1{E_k^+ - \mu_j} - \frac 2{\mu_k - \mu_j} \right) \right) \Xi_j  \gamma_j \sin \varphi_j
\end{align*}
from which it is easy to estimate
\begin{align*}
\left\lvert \frac{\partial \Xi_j}{\partial \varphi_k}\right\rvert & \le 2 \left( \frac {\lVert Q_1\rVert_\infty + 2 \lvert \underline E \rvert +2\eta_{j,0}}{\eta_{j,k} } + 2 \right) C_j  \gamma_k, \qquad k\neq j \\
\left\lvert \frac{\partial \Xi_j}{\partial \varphi_j} \right\rvert & \le  \frac 12 \left( \frac 1{\eta_{j,0}} + \sum_{k\neq j} \frac{\gamma_k}{\eta_{j,k} (\eta_{j,k} + \gamma_k) } \right) \left( \lVert Q_1\rVert_\infty + 2 \lvert \underline E \rvert +2\eta_{j,0}  \right) C_j  \gamma_j
\end{align*}
and then
\begin{align*}
\lVert \Xi(\varphi) - \Xi(\tilde \varphi) \rVert & = \sup_{j\in J} \gamma_j^{1/2} \lVert \Xi_j(\varphi) - \Xi_j(\tilde\varphi) \rVert \\
& \le \sup_{j\in J} \gamma_j^{1/2} \sum_{k\in J} \left\lVert \frac{\partial \Xi_j}{\partial \varphi_k}\right\rVert_\infty  \lVert \varphi_k - \tilde\varphi_k \rVert \\
&  \le   \lVert \varphi - \tilde\varphi \rVert \sup_{j\in J} \sum_{k\in J}  \gamma_j^{1/2} \gamma_k^{-1/2}  \left\lVert \frac{\partial \Xi_j}{\partial \varphi_k}\right\rVert_\infty
\end{align*}
It remains to show that the $\sup$ is finite. Splitting off the $k=j$ term from the sum, the remainder can be estimated by
\[
\sup_{j\in J} \sum_{k\neq j} 2 \gamma_j^{1/2} \gamma_k^{1/2} \left(  \frac{\lVert Q_1\rVert_\infty + 2 \lvert \underline E \rvert + 2 \eta_{j,0}}{\eta_{j,k}} + 2 \right) C_j \le \sup_{j\in J}  C \left( \sum_{k\neq j} \frac{ \gamma_j^{1/2} \gamma_k^{1/2}}{\eta_{j,k}} (1+ \eta_{j,0}) C_j +  \gamma_j^{1/2} C_j \sum_{k\neq j} \gamma_k^{1/2}\right) < \infty
\]
and the $k=j$ term can be estimated by
\[
\sup_j \left( \frac 1{\eta_{j,0}} + \sum_{k\neq j} \frac{\gamma_k}{\eta_{j,k} (\eta_{j,k} + \gamma_k) } \right) \left( \lVert Q_1\rVert_\infty + 2 \lvert \underline E \rvert  +2\eta_{j,0}  \right) C_j  \gamma_j < \infty
\]
from the conditions \eqref{Craig2}.
\end{proof}

\begin{prop}\label{propKdVflow}
Let $u(x,t)$ be a solution of \eqref{KdV}, \eqref{KdVinitial} such that $u$, $\partial_x u$, $\partial_t u \in L^\infty(\mathbb{R} \times [0,T])$ for some $T>0$. Assume that $V\in \mathcal{R}(S)$ and $S = \sigma_\ac(H_V)$. The function $\varphi(x,t)$ is jointly continuous in $(x,t)$; it is differentiable in $t$ and obeys
\begin{equation}\label{KdVflowangularDirichlet}
\partial_t \varphi_j(x,t) = \Xi_j(\varphi(x,t)).
\end{equation}
\end{prop}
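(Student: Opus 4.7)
The plan is to mirror Craig's derivation of the translation flow \eqref{translationflowj}, but with Rybkin's evolution equation \eqref{Rybkintimeevolution} in place of the identity $\partial_x G = (m_-+m_+)G$. Since $M_{22}(x,t;z) = G(x,x;z)$ and $M$ is symmetric, extracting the $(2,2)$ component of \eqref{Rybkintimeevolution} gives
\[
\partial_t G(x,x;z) \;=\; 4(u+2z)\, M_{21}(x,t;z) \;-\; 2(\partial_x u)\, G(x,x;z),
\]
valid for every $z \in \mathbb{C}\setminus S$, and extendable to nontangential boundary values on $S$ by the second part of Rybkin's theorem together with preservation of reflectionlessness along the KdV flow.

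For $(x,t)$ with $\mu_j(x,t) \in (E_j^-, E_j^+)$, implicit differentiation of $G(x,x;\mu_j(x,t)) \equiv 0$ in $t$ yields $\partial_t \mu_j = -(\partial_t G/\partial_z G)|_{z=\mu_j}$. The denominator is given by \eqref{greensfunctionderivativeatmu}; in the numerator, the $G$-term drops out because $G(x,x;\mu_j) = 0$, so only the limit $\lim_{z\to\mu_j} M_{21}(x,t;z)$ matters. A short local expansion of $m_\pm(z)$ near $\mu_j$ --- using that $m_{\sigma_j}$ has a simple pole there while $m_{-\sigma_j}$ is regular, and that $m_+$ and $-m_-$ are Herglotz --- gives a definite nonzero value for this limit. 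The same reasoning applied to $\partial_x G = (m_-+m_+)G = 2 M_{21}$ recovers Craig's formula for $\partial_x \mu_j$. Forming the ratio, both $\partial_z G|_{z=\mu_j}$ and $M_{21}|_{z=\mu_j}$ cancel, leaving the clean Dubrovin-type identity
\[
\partial_t \mu_j(x,t) \;=\; -2\bigl(u(x,t) + 2\mu_j(x,t)\bigr)\, \partial_x \mu_j(x,t).
\]
Using the trace formula $u = Q_1(\varphi)$ and the chain rule \eqref{mufromvarphi}--\eqref{translationflowj}, this is precisely $\partial_t \varphi_j = -2(Q_1+2\mu_j)\Psi_j = \Xi_j(\varphi)$, proving the ODE at every $(x,t)$ with $\mu_j$ in the interior of its gap.

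To extend to arbitrary $(x,t)$, I adapt the non-pausing argument of Proposition~\ref{nonpausingprop}: for each fixed $x$, the exceptional set $\{t : \mu_j(x,t) \in \{E_j^-, E_j^+\}\}$ should be discrete. The strategy is to take the nontangential boundary value of Rybkin's $M$-matrix evolution at $z = E_j^\pm$ (available because reflectionlessness is preserved along the KdV flow) and produce a scalar function of $t$ whose zero set coincides with this exceptional set, in direct analogy with the limit solution $\tilde\psi$ of Proposition~\ref{nonpausingprop}. Combined with continuity of $\varphi(x,t)$ in $t$ --- which follows from joint continuity of $M(x,t;z)$ in $(x,t)$, itself a consequence of the Rybkin ODE --- and with Lipshitz continuity of $\Xi$ on $\mathcal{D}(S)$, this extends $\partial_t\varphi_j = \Xi_j(\varphi)$ to all $(x,t)$. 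Joint continuity of $\varphi$ in $(x,t)$ then follows from continuity in $x$ at fixed $t$ (Craig's result applied to the reflectionless potential $u(\cdot,t)$), continuity in $t$ at fixed $x$ (the ODE just derived), and equicontinuity estimates from continuity of the $M$-matrix. The main obstacle is this gap-edge step: unlike the $x$-direction, where the Schr\"odinger eigenvalue equation at $E_j^\pm$ furnishes an explicit limit solution, the $t$-direction has no equally direct spectral object, and verifying non-triviality of the candidate scalar function of $t$ built from the boundary-value Rybkin evolution at $E_j^\pm$ appears to be the most delicate part of the argument.
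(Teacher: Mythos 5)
Your first step---differentiating $G=M_{2,2}$ via Rybkin's equation \eqref{Rybkintimeevolution}, evaluating at $z=\mu_j$, and combining with \eqref{greensfunctionderivativeatmu} and the trace formula to get $\partial_t\mu_j=-2(u+2\mu_j)\partial_x\mu_j$ and hence \eqref{KdVflowangularDirichlet} whenever $\mu_j(x,t)\in(E_j^-,E_j^+)$---is essentially identical to the paper's Lemma~\ref{diffawayfromgapedges}; the paper just reads off $M_{1,2}\rvert_{z=\mu_j}=-\tfrac12\sigma_j$ directly from the removable-singularity formula \eqref{Mmatrixmuj} rather than from a local expansion of $m_\pm$. (One point you elide: you also need that $\sigma_j$ is constant in $t$ while $\mu_j$ stays in the open gap, which the paper extracts from the evolution of the off-diagonal entries.)

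The gap-edge step is where your proposal breaks down, and the obstacle you flag at the end is not a technicality but a genuine obstruction to your strategy. You propose to show that, for fixed $x$, the set $\{t:\mu_j(x,t)\in\{E_j^-,E_j^+\}\}$ is discrete, in analogy with Prop.~\ref{nonpausingprop}. But the non-pausing mechanism for the translation flow rests on the strict positivity $\Psi_j>0$ on all of $\mathcal{D}(S)$, whereas $\Xi_j=-2(Q_1+2\mu_j)\Psi_j$ carries the sign-changing factor $Q_1+2\mu_j=u+2\mu_j$. If $u(x_0,t_0)+2\mu_j(x_0,t_0)=0$ at a gap edge, the vector field component vanishes there and the Dirichlet eigenvalue genuinely can pause; no discreteness statement is available, and no analogue of the limiting solution $\tilde\psi$ will produce one. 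The paper's proof is structured around precisely this dichotomy. It first proves continuity of $\varphi_j$ in $t$ (from continuity of $G$ in $t$ and the monotone characterization \eqref{dirichleteigenvalueat0}), then derives the integral inequalities \eqref{upperboundvarphim}, \eqref{monotonicityvarphim} from the away-from-edge ODE. In the degenerate case $\Xi_j(\varphi(x_0,t_0))=0$, the bound \eqref{upperboundvarphim} forces $\partial_t\varphi_j(x_0,t_0)=0=\Xi_j$ directly. In the nondegenerate case $\Xi_j(\varphi(x_0,t_0))\neq0$, the paper perturbs in the $x$-direction rather than the $t$-direction: by strict monotonicity of the translation flow (this is where Prop.~\ref{nonpausingprop} actually enters), $\varphi_j(x,t)$ stays strictly inside $(\varphi_j(x_0,t_0),\varphi_j(x_0,t_0)+\pi)$ for $x$ slightly to one side of $x_0$ and $t$ near $t_0$, so the ODE holds there in integrated form; letting $x\to x_0$ recovers the one-sided derivatives at $(x_0,t_0)$. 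You would need to replace your proposed discreteness-in-$t$ argument with something of this kind for the proof to close.
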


As a first step, we prove a version of this proposition away from gap edges.

\begin{lemma}\label{diffawayfromgapedges}
Under the assumptions of Prop.~\ref{propKdVflow}, at any $(x,t)$ such that $\varphi_j(x,t) \notin \pi \mathbb{Z}$, $\varphi_j(x,t)$ is differentiable in $t$ and obeys \eqref{KdVflowangularDirichlet}.
\end{lemma}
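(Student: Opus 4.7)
The plan is to differentiate the defining equation
$M_{22}(x_0,t;\mu_j(x_0,t)) = G(x_0,x_0,t;\mu_j(x_0,t)) = 0$
in $t$ and solve for $\partial_t \mu_j$, then convert to $\partial_t \varphi_j$ via the relation \eqref{mufromvarphi}. Fix a point $(x_0,t_0)$ with $\varphi_j(x_0,t_0) \notin \pi\mathbb{Z}$, so that $\mu_j(x_0,t_0) \in (E_j^-, E_j^+)$. This $z$-value lies in the resolvent set, and $M(x_0,t;z)$ is analytic in $z$ in a complex neighborhood of $\mu_j(x_0,t_0)$; Rybkin's evolution \eqref{Rybkintimeevolution} therefore applies pointwise at this $z$. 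The $z$-derivative $\partial_z G|_{z=\mu_j}$ is explicitly given by \eqref{greensfunctionderivativeatmu} and is strictly positive, so the implicit function theorem yields $C^1$ dependence in $t$ of the unique nearby zero of $M_{22}(x_0,t;\cdot)$; by continuity of $G$ in $(t,z)$, this zero must coincide with $\mu_j(x_0,t)$ for $t$ near $t_0$. Implicit differentiation then gives
\[
\partial_t \mu_j = - \left.\frac{\partial_t M_{22}}{\partial_z M_{22}}\right|_{z=\mu_j(x_0,t)}.
\]

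For the numerator, the $(2,2)$-entry of $PM + MP^T$ reduces, using the symmetry $M_{12} = M_{21}$, to $4(u+2z) M_{12} - 2(\partial_x u) M_{22}$. At $z = \mu_j$ the second term vanishes since $M_{22} = 0$, while $M_{12}|_{z=\mu_j} = -\sigma_j/2$ is read off by noting that $m_{\sigma_j}$ has a simple pole at $\mu_j$, so $(m_-+m_+)/(m_-- m_+) \to -\sigma_j$ in that limit. Hence $\partial_t M_{22}|_{z=\mu_j} = -2\sigma_j(u+2\mu_j)$. For the denominator, the explicit product formula \eqref{greensfunctionderivativeatmu} factors as
$1/\partial_z G|_{z=\mu_j} = \sqrt{(\mu_j - E_j^-)(E_j^+ - \mu_j)}\,\Psi_j$,
yielding
\[
\partial_t \mu_j = 2\sigma_j (u + 2\mu_j) \sqrt{(\mu_j - E_j^-)(E_j^+ - \mu_j)}\, \Psi_j.
\]

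To conclude, differentiating \eqref{mufromvarphi} in $t$ gives $\partial_t \mu_j = -(\gamma_j/2)\sin\varphi_j \cdot \partial_t \varphi_j$, and from \eqref{mufromvarphi}--\eqref{sigmafromvarphi} one computes $\sin\varphi_j = (2\sigma_j/\gamma_j)\sqrt{(\mu_j - E_j^-)(E_j^+ - \mu_j)}$. Thus the square-root factors and the $\sigma_j$ signs cancel on both sides. Substituting $u(x_0,t) = Q_1(\varphi(x_0,t))$ from the trace formula gives
\[
\partial_t \varphi_j = -2(Q_1 + 2\mu_j)\Psi_j = \Xi_j(\varphi(x_0,t)),
\]
which is \eqref{KdVflowangularDirichlet}. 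The main obstacle is the sign bookkeeping: the conventions for $\sigma_j$, the branches of the square roots appearing in $\Psi_j$ and \eqref{greensfunctionderivativeatmu}, the pole behavior of $M_{12}$ at a Dirichlet eigenvalue, and the expression of $\sin \varphi_j$ in terms of $\sigma_j$ all have to be aligned for the cancellations to produce the target formula without stray signs.
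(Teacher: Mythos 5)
Your proposal follows essentially the same route as the paper: apply Rybkin's evolution \eqref{Rybkintimeevolution} to the $(2,2)$-entry $M_{22}=G$, evaluate at the Dirichlet eigenvalue using $M_{12}|_{z=\mu_j}=-\sigma_j/2$ and $M_{22}|_{z=\mu_j}=0$ to get $\partial_t M_{22}|_{z=\mu_j}=-2\sigma_j(u+2\mu_j)$, invoke the implicit function theorem with the denominator \eqref{greensfunctionderivativeatmu}, and convert to the angular variable; your factorization $1/\partial_z G|_{z=\mu_j}=\sqrt{(\mu_j-E_j^-)(E_j^+-\mu_j)}\,\Psi_j$ and the sign bookkeeping all check out against \eqref{dmujdt}.

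One step is missing: differentiability of $\mu_j(x_0,\cdot)$ alone does not yield differentiability of $\varphi_j(x_0,\cdot)$, because the angular coordinate encodes both $\mu_j$ and the sign $\sigma_j$, and a flip of $\sigma_j$ at some $t$ with $\mu_j$ still in the open gap would send $\varphi_j\mapsto-\varphi_j$ discontinuously. (You cannot appeal to continuity of $\varphi_j$ in $t$ here, since in the paper that continuity is itself deduced from this lemma.) The paper closes this by observing, from the time evolution of the off-diagonal entries of $M(x,t;\mu_j(x,t))$, that $\sigma_j(x,t)$ is constant in $t$ as long as $\mu_j(x,t)\in(E_j^-,E_j^+)$; you should add this observation before differentiating \eqref{mufromvarphi}.
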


\begin{proof}
The $M$-matrix is analytic in $z\in \mathbb{C} \setminus S$. It has a removable singularity at a Dirichlet eigenvalue, where it is equal to
\begin{equation}\label{Mmatrixmuj}
M(x,t;\mu_j(x,t)) =  \begin{pmatrix} -\sigma_j(x,t)  m_{-\sigma_j(x,t)}(\mu_j(x,t)) & - \frac 12 \sigma_j(x,t) \\
- \frac 12 \sigma_j(x,t) & 0
\end{pmatrix}.
\end{equation}
Note that the bottom right entry of the $M$-matrix is precisely the diagonal Green's function. From \eqref{Rybkintimeevolution}, the time evolution of the bottom right entry is
\[
\partial_t M_{2,2}(x,t;z) = - 2 (\partial_x u)(x,t)  M_{2,2}(z,t) + 4 (u(x,t) + 2 z)  M_{1,2}(x,t;z).
\]
In particular, for $z=\mu_j(x,t)$, from \eqref{Mmatrixmuj},
\begin{equation}\label{dtM22}
\left. \partial_t M_{2,2}(x,t;z) \right\rvert_{z=\mu_j(x,t)} = - 2 \sigma_j(x,t) (u(x,t) + 2 \mu_j(x,t)).
\end{equation}
At any $(x,t)$ such that $\mu_j(x,t) \in (E_j^-, E_j^+)$, by the implicit function theorem, $\mu_j(x,t)$ is differentiable and
\[
\frac{\partial\mu_j}{\partial t} = - \frac{ \partial G / \partial t\vert_{z = \mu_j}}{ \partial G / \partial z \vert_{z = \mu_j}}.
\]
Using \eqref{dtM22} and \eqref{greensfunctionderivativeatmu}, this becomes
\begin{equation}\label{dmujdt}
\frac{\partial\mu_j}{\partial t} =  4 \sigma_j (u + 2 \mu_j ) \left( (\underline{E} - \mu_j)(E_j^- - \mu_j)(E_j^+ - \mu_j)  \prod_{l\neq j} \frac{(E_l^- - \mu_j)(E_l^+ - \mu_j ) }{(\mu_l  - \mu_j)^2}  \right)^{1/2}.
\end{equation}
Combining this with the trace formula $u=Q_1 \circ \varphi$, we obtain an expression for $\partial \mu_j / \partial t$ in terms of the Dirichlet data.

This shows that $\mu_j(x,t)$ is continuous in $(E_j^-, E_j^+)$. By looking at the time evolution of the off-diagonal entries of $M(x,t;\mu_j(x,t))$, we also see that $\sigma_j(x,t)$ is constant in $t$ as long as $\mu_j(x,t) \in (E_j^-, E_j^+)$.

Then, from \eqref{mufromvarphi}, \eqref{sigmafromvarphi}, we conclude that $\varphi_j(x,t)$ is also differentiable in $t$ and
\[
\frac{\partial\mu_j}{\partial t} = - \frac 12 (E_j^+ - E_j^-) \sin \varphi_j \frac{\partial \varphi_j}{\partial t} = - \sigma_j \sqrt{(E_j^+ - \mu_j)(\mu_j - E_j^-)} \; \frac{\partial\varphi_j}{\partial t}
\]
which, when combined with \eqref{dmujdt} and solved for $\frac{d\varphi_j}{dt}$, concludes the proof.
\end{proof}

\begin{lemma}
$\varphi_j(x,t)$ is continuous in $t$.
\end{lemma}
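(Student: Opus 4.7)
The plan is to derive continuity of $\varphi_j(x,\cdot)\colon \mathbb{R}\to\mathbb{T}$ from continuity in $t$ of the diagonal Green's function $G(x,x;z) = M_{2,2}(x,t;z)$ at each fixed $z \in \mathbb{C}\setminus S$, combined with the explicit quadratic relation \eqref{mufromvarphi} between $\varphi_j$ and $\mu_j$.

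The first step is to observe that Rybkin's evolution equation \eqref{Rybkintimeevolution}, read entrywise for each fixed $z \in \mathbb{C}\setminus S$, is a linear ODE in $t$ for the matrix $M(x,t;z)$ whose coefficients are bounded in $t$ under the hypotheses of the proposition. Hence $M(x,t;z)$ is absolutely continuous in $t$, and in particular continuous, for every such $z$. Specializing to the bottom-right entry, $t \mapsto G(x,x;z)$ is continuous.

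The second step is to deduce continuity of $t \mapsto \mu_j(x,t)$. Fix $t_0$ and set $m_0 = \mu_j(x,t_0) \in [E_j^-, E_j^+]$. The strict monotonicity of $z \mapsto G(x,x;z)$ on $(E_j^-,E_j^+)$ together with definition \eqref{dirichleteigenvalueat0} imply that at $t = t_0$, for any sufficiently small $\epsilon > 0$: if $m_0 - \epsilon \in (E_j^-, E_j^+)$ then $G(x,x;m_0-\epsilon) < 0$, and if $m_0 + \epsilon \in (E_j^-, E_j^+)$ then $G(x,x;m_0+\epsilon) > 0$. By the first step these strict inequalities persist on a neighborhood of $t_0$, which by \eqref{dirichleteigenvalueat0} forces $\mu_j(x,t) \in [m_0 - \epsilon, m_0 + \epsilon] \cap [E_j^-, E_j^+]$ there. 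This proves $\mu_j(x,t) \to m_0$.

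The third step translates continuity of $\mu_j$ into continuity of $\varphi_j$ in $\mathbb{T}$. If $m_0 \in (E_j^-, E_j^+)$, then Lemma~\ref{diffawayfromgapedges} already gives differentiability, hence continuity, of $\varphi_j(x,\cdot)$ at $t_0$ (the proof of that lemma also shows that $\sigma_j$ is locally constant in this regime). If instead $m_0 = E_j^+$ (respectively $E_j^-$), then by \eqref{mufromvarphi} one has $\cos^2(\varphi_j(x,t)/2) = (\mu_j(x,t) - E_j^-)/(E_j^+ - E_j^-) \to 1$ (respectively $0$), forcing $\varphi_j(x,t) \to 0$ (respectively $\pi$) in $\mathbb{T}$, which is precisely $\varphi_j(x,t_0)$. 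The only subtle point is the gap-edge case, where the sign $\sigma_j$ may flip as $\mu_j$ crosses a gap edge; this issue is however invisible in $\mathbb{T}$, since $\pm\varphi$ are identified in $\mathbb{T}$ whenever $\varphi \in \pi\mathbb{Z}$ --- exactly as in the non-pausing argument of Proposition~\ref{nonpausingprop} for the translation flow.
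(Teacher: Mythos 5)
Your proof is correct and follows essentially the same route as the paper's: reduce to the gap-edge case via the preceding lemma, use Rybkin's evolution equation \eqref{Rybkintimeevolution} to get continuity of the diagonal Green's function in $t$, and use persistence of the sign of $G(x,x;E_j^\pm \mp \epsilon)$ together with \eqref{dirichleteigenvalueat0} to pin $\mu_j(x,t)$ near the edge. The only difference is that you spell out the passage from continuity of $\mu_j$ to continuity of $\varphi_j$ in $\mathbb{T}$ via \eqref{mufromvarphi}, which the paper leaves implicit.
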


\begin{proof}
Due to the previous lemma, we only need to address continuity of $\varphi_j$ at points with $\mu_j \in \{E_j^-, E_j^+\}$. Let us assume $\mu_j(x_0,t_0) = E_j^+$; the other case is handled analogously.

For any $\epsilon > 0$, $G(x_0,x_0; E_j^+ - \epsilon, t_0) < 0$. By \eqref{Rybkintimeevolution}, the diagonal Green's function is continuous in $t$; thus, there is a $\delta > 0$ such that $G(x_0,x_0; E_j^+ - \epsilon, t) < 0$ for $\lvert t - t_0 \rvert < \delta$. Thus, $\mu_j(x_0,t) > E_j^+ - \epsilon$ for $\lvert t- t_0 \rvert < \delta$.
\end{proof}

Before we prove that $\varphi_j$ is differentiable everywhere, as an intermediate step, we have to prove statements which will eventually be superseded.

\begin{lemma} Fix $x\in \mathbb{R}$ and $j\in J$. Lift $\varphi_j(x,t)$ to a continuous function $\varphi_j : \mathbb{R}^2 \to \mathbb{R}$.
\begin{enumerate}[(i)]
\item For any $a<b$,
\begin{equation}\label{upperboundvarphim}
\lvert \varphi_j(x,b) - \varphi_j(x,a)  \rvert \le \int_a^b \lvert \Xi_j(\varphi(x,\tau)) \rvert d\tau.
\end{equation}
\item For any $a<b$, if $\Xi_j(\varphi(x,\tau)) \ge 0$ for $\tau \in (a,b)$, then
\begin{equation}\label{monotonicityvarphim}
0 \le  \varphi_j(x,b) -  \varphi_j(x,a)  \le \int_a^b  \Xi_j(\varphi(x,\tau)) d\tau.
\end{equation}
\end{enumerate}
\end{lemma}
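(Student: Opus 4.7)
Fix $x\in \IR$ and write $F(t) := \varphi_j(x,t)$ for $t \in [a,b]$; let $B := \{t \in [a,b] : F(t) \in \pi \IZ\}$, which is closed by continuity of $F$. By Lemma~\ref{diffawayfromgapedges}, $F$ is differentiable on $[a,b] \setminus B$ with $F'(t) = \Xi_j(\varphi(x,t))$, and by the preceding proposition $\Xi$ is a Lipschitz vector field on the compact torus $\cD(S)$, so $M := \sup_{\cD(S)} \lvert \Xi_j \rvert$ is finite. The plan is to prove that $F$ is Lipschitz on $[a,b]$ with constant $M$, derive the identity
\[
F(b) - F(a) = \int_{[a,b] \setminus B} \Xi_j(\varphi(x,\tau))\, d\tau,
\]
and then read off (i) and (ii) directly from it.

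The crucial intermediate step is that $F$ is locally constant on $B$: for any $t_0 \in B$, continuity of $F$ furnishes $\delta > 0$ with $\lvert F(t) - F(t_0)\rvert < \pi$ whenever $\lvert t - t_0\rvert < \delta$, and if in addition $t \in B$, then $F(t), F(t_0) \in \pi\IZ$ are separated by less than $\pi$, so they must coincide. In particular, $F(B)$ is a continuous image of the compact set $B$ in the discrete space $\pi\IZ$, hence finite.

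To upgrade local constancy to a Lipschitz bound, fix $t_0 \in B$ and $t \in (t_0, t_0 + \delta)$ (the case $t < t_0$ is symmetric). If $t \in B$, then $F(t) = F(t_0)$. Otherwise $t$ belongs to some maximal open subinterval $(a_k, b_k)$ of $[a,b] \setminus B$; since $t_0 \in B$ cannot be interior to $(a_k, b_k)$ we must have $a_k \ge t_0$, so either $a_k = t_0$ or $a_k \in B$ with $\lvert a_k - t_0\rvert < \delta$, which yields $F(a_k) = F(t_0)$. Differentiability on $(a_k, b_k)$ together with $\lvert F'\rvert \le M$ then gives
\[
\lvert F(t) - F(t_0)\rvert = \lvert F(t) - F(a_k)\rvert \le M(t - a_k) \le M(t - t_0).
\]
This furnishes a uniform local Lipschitz bound with constant $M$ at every $t_0 \in B$; the corresponding bound at $t_0 \notin B$ is immediate from differentiability. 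A standard finite-chain argument then upgrades this to global Lipschitzness of $F$ on $[a,b]$ with constant $M$.

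Being Lipschitz, $F$ is absolutely continuous, so $F(b) - F(a) = \int_a^b F'(\tau)\, d\tau$. Since $F(B)$ is finite, $B$ is contained in the union of finitely many level sets $\{F = c_\ell\}$, and for an absolutely continuous function the derivative vanishes almost everywhere on each such level set; thus $F'(\tau) = 0$ for almost every $\tau \in B$. Combined with $F'(\tau) = \Xi_j(\varphi(x,\tau))$ on $[a,b]\setminus B$, this yields the displayed identity. Passing to absolute values inside the integral and enlarging the domain of integration back to $[a,b]$ proves (i); under the sign hypothesis of (ii), the identity makes the right-hand side simultaneously non-negative and bounded above by $\int_a^b \Xi_j(\varphi(x,\tau))\, d\tau$, giving the stated two-sided bound. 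The main subtlety throughout is the Lipschitz step, since $B$ can accumulate in a Cantor-like fashion and no naive finite partition of $[a,b]$ into differentiability intervals is available; the local constancy of $F$ on $B$ is precisely what bypasses this difficulty.
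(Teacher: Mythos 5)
Your proof is correct, but it takes a genuinely different route from the paper's. The paper first uses uniform continuity to split $[a,b]$ into finitely many subintervals on which $\varphi_j$ oscillates by less than $\pi$, and on each such subinterval looks only at the first and last times $a_1 = \inf\{\tau : \varphi_j(\tau)\in\pi\mathbb{Z}\}$ and $b_1 = \sup\{\tau : \varphi_j(\tau)\in\pi\mathbb{Z}\}$: since $\varphi_j(a_1),\varphi_j(b_1)\in\pi\mathbb{Z}$ differ by less than $\pi$, they are equal, so the whole middle portion contributes nothing and the increment reduces to $\int_a^{a_1}\Xi_j(\varphi(\tau))\,d\tau + \int_{b_1}^{b}\Xi_j(\varphi(\tau))\,d\tau$, from which (i) and (ii) are immediate. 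This two-crossing trick completely sidesteps the structure of the hitting set $B=\{t : \varphi_j(x,t)\in\pi\mathbb{Z}\}$, which is the main technical burden in your argument. Your route --- local constancy of $F$ on $B$, a uniform local Lipschitz bound upgraded to a global one, absolute continuity, and the standard fact that an absolutely continuous function has vanishing derivative a.e.\ on each of its level sets --- is heavier (it invokes Lebesgue-density/a.e.-differentiation machinery) but yields a slightly stronger intermediate conclusion, namely the exact identity $F(b)-F(a)=\int_{[a,b]\setminus B}\Xi_j(\varphi(x,\tau))\,d\tau$ together with global Lipschitz continuity of $F$ in $t$. Note also that your observation that local constancy on $B$ defuses possible Cantor-like accumulation of $B$ is essentially the same structural fact the paper exploits through $\varphi_j(a_1)=\varphi_j(b_1)$; both proofs are valid, with the paper's being the more economical.
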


\begin{proof}
We supress the parameter $x$ throughout this proof, since it can be considered fixed.

By uniform continuity, the interval $[a,b]$ can be split into finitely many closed intervals $I$ with the property
\begin{equation}\label{apriorivarphim}
\lvert \varphi_j(u) - \varphi_j(v) \rvert < \pi\text{ if }(u,v) \subset I.
\end{equation}
By the triangle inequality, it suffices to prove \eqref{upperboundvarphim}, \eqref{monotonicityvarphim} for $(a,b) \subset I$ with an interval $I$ with the property \eqref{apriorivarphim}.

If $\varphi_j(\tau) \notin \pi \mathbb{Z}$ for $\tau \in (a,b)$, then \eqref{KdVflowangularDirichlet} holds on $(a,b)$, so
\[
\varphi_j(b) - \varphi_j(a) = \int_a^b \Xi_j(\varphi(\tau)) d\tau,
\]
and \eqref{upperboundvarphim},  \eqref{monotonicityvarphim} follow. Otherwise, take
\[
a_1 = \inf \{ \tau \in (a,b) \vert \varphi_j(\tau) \in \pi \mathbb{Z} \}, \qquad b_1 = \sup \{ \tau \in (a,b) \vert \varphi_j(\tau) \in \pi \mathbb{Z} \}.
\]
Then  \eqref{KdVflowangularDirichlet} holds on $(a,a_1)$ and $(b_1,b)$; moreover, by \eqref{apriorivarphim} and $\varphi_j(a_1), \varphi_j(b_1) \in \pi \mathbb{Z}$, we have $\varphi_j(a_1) = \varphi_j(b_1)$, so
\begin{align*}
\varphi_j(b) - \varphi_j(a) & = \varphi_j(b) - \varphi_j(b_1)  + \varphi_j(a_1) - \varphi_j(a)  = \int_a^{a_1}  \Xi_j(\varphi(\tau))  d\tau + \int_{b_1}^b  \Xi_j(\varphi(\tau))  d\tau
\end{align*}
which implies \eqref{upperboundvarphim},  \eqref{monotonicityvarphim}.
\end{proof}

\begin{proof}[Proof of Proposition~\ref{propKdVflow}]

For differentiability, given Lemma~\ref{diffawayfromgapedges}, there are two cases left to consider.

First case: $\varphi_j(x_0,t_0) \in \pi\mathbb{Z}$ and $\Xi_j(\varphi(x_0,t_0)) = 0$. Since $\lim_{\tau\to t_0} \Xi_j(\varphi(x_0,\tau)) = \Xi_j(\varphi(x_0,t_0)) = 0$, \eqref{upperboundvarphim} implies that
\[
\lim_{t\to t_0} \frac{\varphi_j(x_0,t) - \varphi_j(x_0,t_0) }{t-t_0}  = 0
\]
so $\varphi_j(x_0,t)$ is differentiable at $t=t_0$ and $\partial_t \varphi_j(x_0,t_0) = 0 = \Xi_j(\varphi(x_0,t_0))$.

Second case: $\varphi_j(x_0,t_0) \in \pi\mathbb{Z}$ and $\Xi_j(\varphi(x_0,t_0)) \neq 0$. Without loss of generality we assume $\Xi_j(\varphi(x_0,t_0)) > 0$ (the other case is treated analogously). There is then a neighborhood $(x,t) \in (x_0-\epsilon, x_0+\epsilon) \times (t_0-\epsilon,t_0+\epsilon)$ in which $\Xi_j(\varphi(x,t)) \ge 0$. We can then conclude that
\[
\varphi_j(x_0,t_0) < \varphi_j(x,t) < \varphi_j(x_0,t_0)+\pi,\quad\text{ if }x \in (x_0,x_0+\epsilon), t \in [t_0,t_0+\epsilon);
\]
the first inequality follows by the strict monotonicity of the translation flow and \eqref{monotonicityvarphim}, and the second by joint continuity in $(x,t)$ and shrinking $\epsilon$ if necessary.

For $x\in (x_0,x_0+\epsilon)$ and $t \in (t_0,t_0+\epsilon)$, we can therefore use \eqref{KdVflowangularDirichlet} to conclude that
\begin{equation}\label{varphimintegral2}
\varphi_j(x,t) - \varphi_j(x,t_0) = \int_{t_0}^t \Xi_j(\varphi(x,\tau)) d\tau
\end{equation}
for $x \in (x_0,x_0+\epsilon)$. Letting $x \to x_0$, we conclude that \eqref{varphimintegral2} holds also for $x=x_0$. It then follows that
\[
\lim_{t \to t_0+} \frac{\varphi_j(x_0,t) - \varphi_j(x_0,t_0) }{t-t_0} = \Xi_j(\varphi(x_0,t_0)).
\]
A similar argument starting with $x \in (x_0-\epsilon,x_0)$ and $t \in (t_0-\epsilon,t_0)$ provides the left derivative, which completes the proof.
\end{proof}

\section{Construction of Reflectionless Potentials and KdV Solutions by Finite Gap Approximants}\label{sec.4}

In this section, we construct solutions to two questions. The first result is a converse of Craig's results, where we show that any Dirichlet data $f\in \mathcal{D}(S)$ corresponds to a reflectionless operator $H_W$ with spectrum $S$; in other words, we show that the map $B$ defined by \eqref{mapB} is onto. In the second result, we construct solutions of the KdV equation with a prescribed spectrum $S$ and prescribed initial Dirichlet data.

On the torus $\mathcal{D}(S)$ of (angular) Dirichlet data given as before, we introduce for $k=1,2,3$,
\begin{equation}\label{Qkdefinition}
Q_k(\varphi) = \underline E^{k} + \sum_{j\in J} ((E_j^-)^k + (E_j^+)^k - 2(\mu_j)^k).
\end{equation}

\begin{lemma}\label{L41}
If $S$ obeys \eqref{traceconditions}, then $Q_1$, $Q_2$, $Q_3$ are continuous scalar fields on $\mathcal{D}(S)$.
\end{lemma}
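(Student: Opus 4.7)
The plan is to show that each of the series defining $Q_1, Q_2, Q_3$ converges uniformly on $\mathcal{D}(S)$ and that each individual summand is continuous with respect to the metric \eqref{metricCraig}; standard uniform-convergence arguments then give continuity of the sum.

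First, I would obtain a pointwise bound on each summand. Writing $\mu_j = E_j^- + \gamma_j s_j$ with $s_j = \cos^2(\varphi_j/2) \in [0,1]$ and using the binomial expansion,
\[
(E_j^-)^k + (E_j^+)^k - 2\mu_j^k = \sum_{m=1}^{k} \binom{k}{m} (E_j^-)^{k-m} \gamma_j^m (1 - 2 s_j^m).
\]
Since $|E_j^-| \le |\underline E| + \eta_{j,0}$ and $\gamma_j$ is bounded (as $\sum_l \gamma_l < \infty$), each term on the right is dominated by a constant multiple of $(1+\eta_{j,0}^{k-1})\gamma_j$, so the $k$-th summand satisfies
\[
\bigl| (E_j^-)^k + (E_j^+)^k - 2\mu_j^k \bigr| \le C_k (1 + \eta_{j,0}^{k-1}) \gamma_j, \qquad k=1,2,3,
\]
uniformly in $\varphi \in \mathcal{D}(S)$. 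The condition \eqref{traceconditions} then gives
\[
\sum_{j \in J} \sup_{\varphi \in \mathcal{D}(S)} \bigl| (E_j^-)^k + (E_j^+)^k - 2\mu_j^k \bigr| < \infty
\]
for $k=1,2,3$, so the series defining $Q_k$ converges absolutely and uniformly.

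Next, I would observe that each summand $f_j(\varphi) := (E_j^-)^k + (E_j^+)^k - 2\mu_j^k$ depends only on $\varphi_j$ through the smooth function $\mu_j = E_j^- + \gamma_j \cos^2(\varphi_j/2)$, and hence is continuous in $\varphi_j \in \mathbb{T}$. To promote this to continuity in the metric \eqref{metricCraig}, I would check that metric convergence $\varphi^{(n)} \to \varphi$ (i.e.\ $\sup_j \gamma_j^{1/2}\|\varphi_j^{(n)} - \varphi_j\|_{\mathbb{T}} \to 0$) implies $\varphi_j^{(n)} \to \varphi_j$ componentwise whenever $\gamma_j > 0$; indices with $\gamma_j = 0$ contribute nothing to the sum, so componentwise convergence on the relevant indices suffices for each $f_j$ to be continuous on $\mathcal{D}(S)$.

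Combining the uniform absolute convergence from the first step with the continuity of each summand from the second step, $Q_k$ is the uniform limit of continuous functions on $\mathcal{D}(S)$, and therefore continuous. The only mildly delicate point is the bookkeeping in the binomial expansion to ensure that the polynomial growth in $|E_j^-|$ is absorbed precisely by the weight $(1+\eta_{j,0}^2)$ in \eqref{traceconditions} for $k=3$, which is the largest $k$ treated; the lower $k$ cases follow from the subset of \eqref{traceconditions} that requires only $\sum_j (1+\eta_{j,0}) \gamma_j < \infty$ and $\sum_j \gamma_j < \infty$.
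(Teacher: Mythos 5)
Your proof is correct and follows essentially the same route as the paper: a binomial expansion of each summand around $E_j^-$ yielding a uniform bound of the form $C_k(1+\eta_{j,0}^{k-1})\gamma_j$, summability of these bounds via \eqref{traceconditions}, and continuity of the uniform limit (the paper phrases the last step via the product topology on $\mathcal{D}(S)$, which is what your componentwise-convergence remark amounts to). The only cosmetic difference is that the paper absorbs the mixed terms $\eta_{j,0}^n\gamma_j^m$ with an arithmetic--geometric mean inequality, while you use $\lvert E_j^-\rvert \le \lvert \underline E\rvert + \eta_{j,0}$ and $\gamma_j \le \Gamma$ directly; both give the same bound.
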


\begin{proof}
Since $\mathcal{D}(S)$ is equipped with the product topology, it is sufficient to establish that the series in \eqref{Qkdefinition} converges absolutely and uniformly. For $k=1$, this follows from
\[
\lvert E_j^- + E_j^+ - 2 \mu_j \rvert \le \gamma_j.
\]
For $k\ge 2$, note that for $x\in [E_j^-, E_j^+]$,
\[
\lvert x^k - (E_j^-)^k \rvert
\le \sum_{m=1}^{k} \binom{k}{m} \lvert E_j^-\rvert^{k-m} (x-E_j^-)^{m}
\le \sum_{m=1}^{k} \binom{k}{m} \lvert E_j^-\rvert^{k-m} \gamma_j^{m}.
\]
Writing $E_j^- = \underline E + \eta_{j,0}$ and expanding,
\[
\lvert x^k - (E_j^-)^k \rvert \le \sum_{m=1}^{k} \sum_{n=0}^{k-m} \binom{k}{m} \binom{k-m}{n} \lvert \underline E\rvert^{k-m-n} \eta_{j,0}^{n} \gamma_j^{m}.
\]
By the arithmetic-geometric inequality,
\[
\eta_{j,0}^{n} \gamma_j^{m} \le \Gamma^{m-1} \left( \frac{n}{k-1} \eta_{j,0}^{k-1} \gamma_j + \frac{k-1-n}{k-1}\gamma_j \right)
\]
where $\Gamma =\sup_{j\in J} \gamma_j$. We therefore obtain
\[
\lvert x^k - (E_j^-)^k \rvert \le C (\eta_{j,0}^{k-1} \gamma_j + \gamma_j)
\]
where $C$ is a constant depending only on $k$, $\underline E$, and $\Gamma$. Thus,
\[
\lvert (E_j^-)^k + (E_j^+)^k - 2(\mu_j)^k \rvert \le 3 C (\eta_{j,0}^{k-1} \gamma_j + \gamma_j)
\]
and the series converges absolutely and uniformly by \eqref{Qkdefinition}.
\end{proof}

Our first result is a converse of Craig's results: we show that any Dirichlet data in $\mathcal{D}(S)$ corresponds to a reflectionless potential $\mathcal{R}(S)$, or in other words, that the map $B: \mathcal{R}(S) \to \mathcal{D}(S)$ defined in \eqref{mapB} is onto. Combined with Craig's theory, this shows that $B$ is a homeomorphism.

For this result, we will assume that $S$ is equal to its essential closure, i.e., that
\begin{equation}\label{acproperty}
\forall x\in S \quad \forall \epsilon > 0 \quad \Leb(S \cap (x-\epsilon,x+\epsilon)) > 0,
\end{equation}
where $\Leb$ denotes Lebesgue measure. This certainly holds if, for instance, $S$ is the a.c.\ spectrum of some self-adjoint operator, so it will hold in our applications.

\begin{prop}\label{P4a}
Let $S$ obey \eqref{Craig1}, \eqref{acproperty}. Let $f \in \mathcal{D}(S)$ and let $\phi: \mathbb{R} \to \mathcal{D}(S)$ be the unique solution of the initial value problem
\begin{align*}
\partial_x \phi(x) & = \Psi(\phi(x)), \\
\phi(0) & = f.
\end{align*}
If we define $W: \mathbb{R} \to \mathbb{R}$ by
\begin{equation}\label{Q0W}
W = Q_1 \circ \phi,
\end{equation}
then $W\in\mathcal{R}(S)$ and $B(W)=f$. Moreover, if $S$ also obeys \eqref{traceconditions}, then $W \in C^4(\mathbb{R}) \cap W^{4,\infty}(\mathbb{R})$ and $W$ obeys the higher order trace formulas
\begin{align}
Q_2 \circ \phi & = - \tfrac 12 \partial_{x}^2 W + W^2 \label{Q1W} \\
Q_3 \circ \phi & =  \frac 3{16} \partial_{x}^4 W - \frac 32 W \partial_{x}^2 W - \frac{15}{16} (\partial_x W)^2 + W^3 \label{Q2W}
\end{align}
\end{prop}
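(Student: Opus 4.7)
The plan is to construct $W$ by finite-gap approximation, as the section title suggests. First, choose an exhaustion $J_1\subset J_2\subset\cdots$ of $J$ by finite subsets with $\bigcup_n J_n = J$, and let $S_n$ be the finite-gap set obtained from $S$ by filling in every gap $(E_j^-, E_j^+)$ with $j\in J\setminus J_n$. Then $S\subset S_{n+1}\subset S_n$ and $\bigcap_n S_n = S$. Classical finite-gap inverse spectral theory (Dubrovin--Matveev--Novikov, McKean--Trubowitz) applied to $S_n$ assigns to $\pi_n f \in \mathcal{D}(S_n)=\mathbb{T}^{J_n}$ a quasiperiodic reflectionless potential $W_n$ with $\sigma(H_{W_n})=S_n$ and Dirichlet data $\pi_n f$ at $x=0$; by the Craig theory reviewed in Section~\ref{sec.2} applied to $S_n$, one has $W_n = Q_1^{(n)}\circ\phi^{(n)}$, where $\phi^{(n)}$ solves the Dubrovin flow $\partial_x\phi^{(n)} = \Psi^{(n)}(\phi^{(n)})$ on $\mathcal{D}(S_n)$ with $\phi^{(n)}(0) = \pi_n f$.

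Next I would prove $W_n \to W := Q_1\circ\phi$ uniformly on compact subsets of $\mathbb{R}$. The key comparison is between $\Psi^{(n)}(\pi_n \phi)$ and $\pi_n\Psi(\phi)$: the discrepancy comes from the missing factors $\prod_{l\in J\setminus J_n}(E_l^- - \mu_j)(E_l^+ - \mu_j)/(\mu_l - \mu_j)^2$, which tend to $1$ uniformly in $j\in J_n$ and in $\phi\in\mathcal{D}(S)$ by \eqref{Craig1}. Combined with the Lipschitz property of $\Psi$ established by Craig and the matching initial data (the natural lift of $\phi^{(n)}$ to $\mathcal{D}(S)$ by padding with the fixed coordinates of $f$ at $x=0$), a Gronwall argument yields $\phi^{(n)}(x) \to \phi(x)$ in $\mathcal{D}(S)$ uniformly on compacts in $x$. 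Continuity of $Q_1$ from Lemma~\ref{L41} then converts this to the desired convergence $W_n\to W$. Under the additional hypothesis \eqref{traceconditions}, a bootstrap through the finite-gap analogues of \eqref{Q1W}--\eqref{Q2W} (which each $W_n$ satisfies by classical theory) upgrades the convergence to four derivatives, delivering $W\in C^4(\mathbb{R})\cap W^{4,\infty}(\mathbb{R})$ and the trace formulas \eqref{Q1W}, \eqref{Q2W} in the limit.

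To conclude, I would verify the spectral properties. Uniform convergence $W_n\to W$ on compacts gives strong resolvent convergence $H_{W_n}\to H_W$, hence $\sigma(H_W)\subset \liminf_n \sigma(H_{W_n}) = \bigcap_n S_n = S$. For the reverse inclusion, passing to the limit in \eqref{greensfunctionproductformula} applied to each $W_n$ (absolutely convergent under \eqref{Craig1}) gives an explicit product formula for $G(x,x;z)$ in terms of $\phi(x)$. From this formula both $\Ree G(x,x;y+i0)=0$ and $\imm G(x,x;y+i0)\ne 0$ for a.e.\ $y\in S$ can be read off directly, since on each spectral band the expression under the square root is a negative real number. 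The latter, combined with the essential-closure assumption \eqref{acproperty}, gives $S\subset \sigma_\ac(H_W)\subset \sigma(H_W)$, so $\sigma(H_W) = S$ and $W\in\mathcal{R}(S)$. Finally, $B(W)=f$ follows because the product formula identifies the zero of $G(0,0;\cdot)$ in each gap as the $j$th coordinate of $f$ via \eqref{mufromvarphi}, while the signs $\sigma_j(0)$ match through the finite-gap approximants.

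The hard part will be the convergence step for the flows: the ODEs $\partial_x\phi^{(n)} = \Psi^{(n)}(\phi^{(n)})$ live on different tori $\mathcal{D}(S_n)$, and the quantitative comparison with the limiting flow on $\mathcal{D}(S)$ requires uniform control (in $j\in J$) of the tail products $\prod_{l\in J\setminus J_n}(1+\gamma_l/\eta_{j,l})$ coming from \eqref{Cjdef}. A secondary technical point is justifying the passage to the limit in the product formula for $G(x,x;z)$ at boundary values on $S$, which is again a consequence of the summability built into \eqref{Craig1}.
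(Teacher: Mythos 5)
Your proposal follows essentially the same route as the paper: finite-gap truncations $S^N$, comparison of the Dubrovin flows on the different tori via the Lipschitz property and a Gronwall-type estimate (the paper packages this as Lemmas~\ref{L43} and \ref{L44a}, lifting everything to $\mathcal{D}(S)$ rather than projecting), uniform convergence $W^N \to W$ on compacts via continuity of $Q_1$, strong resolvent convergence for $\sigma(H_W)\subset S$, and the bootstrap through the finite-gap trace formulas \eqref{Q1}--\eqref{Q2} for the $C^4$ statement. The one step where you genuinely diverge is the identification of reflectionlessness and of the full spectrum: you propose to pass to the limit in the product formula \eqref{greensfunctionproductformula} and read off $\Ree G(x,x;y+i0)=0$ and $\imm G(x,x;y+i0)\neq 0$ directly from the boundary values of the infinite product on $S$. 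The paper instead invokes Craig's Lemma~5.2 from \cite{Cr}, which transfers the a.e.\ vanishing of $\Ree G(x,x;\cdot+i0)$ on $S$ from the approximants $W^N$ to the limit $W$ using only locally uniform convergence of the Green's functions off $S$, and then concludes $\Leb(S\setminus\sigma(H_W))=0$ by the soft observation that $\imm G$ vanishes off $\sigma(H_W)$ while $G(x,x;y+i0)=0$ only on a null set. Your version, if completed, yields the slightly stronger conclusion $S\subset\sigma_\ac(H_W)$, but it carries exactly the technical burden you flag: justifying a.e.\ nontangential boundary values of the infinite product on a set $S$ that need not contain any interval (so ``on each spectral band'' is not available verbatim), which requires a maximal-function or Borel--Cantelli argument from $\sum_j\gamma_j<\infty$. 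The paper's choice of Craig's lemma sidesteps this entirely, and I would recommend adopting it; otherwise your outline is sound, including the identification $B(W)=f$ via the signs of $G$ on either side of $\mu_j^N$ in each gap.
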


\begin{remark}
The claim that $B(W)=f$ is not trivial; since $Q_1$ is far from injective, it is not a priori obvious that two different trajectories $\phi, \tilde\phi$ under the flow $\Psi$ may not lead to the same potential, $Q_1 \circ \phi = Q_1 \circ \tilde \phi$. The above proposition rules this out.
\end{remark}

The other main result of this section concerns existence of a solution of the KdV equation with prescribed initial Dirichlet data.

\begin{prop}\label{P4}
Let $S$ obey \eqref{Craig2}, \eqref{traceconditions}, \eqref{acproperty}. Let $f \in \mathcal{D}(S)$. There exists a function $\varphi: \mathbb{R}^2 \to \mathcal{D}(S)$ such that $\varphi(0,0) = f$ and
\[
\partial_x \varphi(x,t) = \Psi(\varphi(x,t)), \qquad \partial_t \varphi(x,t) = \Xi(\varphi(x,t)).
\]
If we define $u: \mathbb{R}^2 \to \mathbb{R}$ by
\begin{equation}\label{Q0u}
u = Q_1 \circ \varphi,
\end{equation}
then the function $u(x,t)$ obeys the KdV equation \eqref{KdV}. Moreover, for each $t\in \mathbb{R}$, we have $u(\cdot, t) \in \mathcal{R}(S)$ and $B(u(\cdot,t))=\varphi(0,t)$.
\end{prop}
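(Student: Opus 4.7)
The strategy is finite-gap approximation, as announced in the section title. Fix an exhaustion $J_n \nearrow J$ by finite index sets, and build $S_n$ by collapsing each gap $(E_j^-, E_j^+)$ with $j \notin J_n$ to a single point. The torus $\mathcal{D}(S_n) = \mathbb{T}^{J_n}$ is a coordinate projection $\pi_n$ of $\mathcal{D}(S)$, and I set $f_n = \pi_n f$. Standard Dubrovin--McKean--van Moerbeke algebro-geometric theory then supplies a smooth KdV solution $u_n(x,t)$ whose Dirichlet trajectory $\varphi_n(x,t) \in \mathcal{D}(S_n)$ satisfies $\partial_x \varphi_n = \Psi_n(\varphi_n)$, $\partial_t \varphi_n = \Xi_n(\varphi_n)$, and $\varphi_n(0,0) = f_n$; crucially, in this finite-gap setting the two flows commute, since they come from commuting Hamiltonians of the integrable system.

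Because $\Psi$ and $\Xi$ are Lipschitz on $\mathcal{D}(S)$ (Craig's result and the proposition preceding this one), each of the ODEs $\partial_x \phi = \Psi(\phi)$ and $\partial_t \psi = \Xi(\psi)$ has a unique global solution from any starting point. I would define $\varphi(x,t)$ by first $\Xi$-flowing $f$ for time $t$ and then $\Psi$-flowing the result for time $x$, which immediately gives $\partial_x \varphi = \Psi(\varphi)$. The other identity $\partial_t \varphi = \Xi(\varphi)$ requires that the two flows commute on $\mathcal{D}(S)$, and this is where the approximants enter: a Gronwall comparison, with Lipschitz constants of $\Psi_n, \Xi_n$ uniform in $n$ (controlled by \eqref{Craig2}), yields $\varphi_n \to \varphi$ in the Craig metric, uniformly on compacts in $(x,t)$. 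Since each $\varphi_n$ obeys $\partial_t \varphi_n = \Xi_n(\varphi_n)$ and $\Xi_n \to \Xi$ with uniform bounds, passing to the limit produces $\partial_t \varphi = \Xi(\varphi)$.

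Once $\varphi$ has been constructed with both flow properties, the assertions $u(\cdot, t) \in \mathcal{R}(S)$ and $B(u(\cdot,t)) = \varphi(0,t)$ follow by applying Proposition~\ref{P4a} to the trajectory $x \mapsto \varphi(x,t)$ at each fixed $t$. For the KdV equation itself, I would invoke the higher-order trace formulas \eqref{Q1W}, \eqref{Q2W}: together with $\partial_x u = (dQ_1)(\Psi(\varphi))$, they express every spatial derivative of $u$ up to order three, and $\partial_t u$ (through $\Xi$), as continuous scalar fields on $\mathcal{D}(S)$ evaluated along $\varphi$. Each approximant $u_n$ satisfies KdV, and the uniform-on-compacts convergence $\varphi_n \to \varphi$ combined with continuity of $Q_1, Q_2, Q_3$ (Lemma~\ref{L41}) promotes this to $u_n \to u$ in $C^3_x \cap C^1_t$ on compacts; the KdV equation for $u$ then passes to the limit.

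The principal obstacle is the compatibility of the two flows on the infinite-dimensional torus $\mathcal{D}(S)$. Commutativity is not visible a priori from the formulas for $\Psi$ and $\Xi$, and the need to inherit it by approximation is the main reason finite-gap approximants are introduced here, and also why the strengthened conditions \eqref{Craig2} are imposed (to make $\Xi$ Lipschitz, and to force uniform-in-$n$ control of the Lipschitz constants of $\Xi_n$ needed for the Gronwall step). A secondary technical issue is choosing the collapse points for gaps outside $J_n$ consistently so that $S_n \to S$ in a sense that makes the approximants $u_n$ actually converge to the intended reflectionless potential on $S$ at each fixed $t$; the structural uniformity of \eqref{Craig2} as gaps are pruned should handle this without difficulty.
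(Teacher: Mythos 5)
Your proposal is correct and follows essentially the same route as the paper: finite-gap approximants obtained by closing all but finitely many gaps, commutativity of the two flows on the finite-dimensional tori (which the paper verifies by the direct calculation $\Psi^N_j \partial_j \Xi^N_i - \Xi^N_j \partial_j \Psi^N_i = 0$ and by citing the algebro-geometric theory in \cite{GH}), a contraction/Gronwall comparison with Lipschitz constants and vector-field differences controlled uniformly in $N$ via \eqref{Craig2} to get uniform-on-compacts convergence of the Dirichlet trajectories, Proposition~\ref{P4a} at each fixed $t$ for the reflectionless and inverse statements, and the trace formulas to upgrade to $C^4$ convergence in $x$ so that the KdV equation passes to the limit (the paper handles the $t$-derivative through the integrated form of \eqref{KdV}, which is equivalent to your $C^1_t$ step). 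The only cosmetic difference is your description of the approximants as ``collapsing'' gaps with a choice of collapse point; the paper simply fills in the gaps with index $j>N$, so no such choice is needed.
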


We will prove these propositions by a Cauchy sequence argument. The main ingredient is the theory of finite gap potentials, already mentioned in the introduction.

In this and the next section, we will find it notationally convenient to assume that gaps are indexed by positive integers, i.e., that $J=\mathbb{N}$. This doesn't reduce generality, since in the special case where $J$ is finite, the results are already known. Denote
\[
S^N = [\underline E, \infty) \setminus \bigcup_{j\le N} (E_j^-, E_j^+)
\]
and denote the corresponding isospectral torus by $\mathcal{D}(S^N)$ and vector fields corresponding to translation and KdV flows by $\Psi^N$ and $\Xi^N$.

Starting from an element $f \in \mathcal{D}(S)$, we observe the functions $\varphi^N:\mathbb{R}^2 \to \mathcal{D}(S^N)$ which solve the equations
\begin{equation}
\partial_x \varphi^N = \Psi^N(\varphi^N), \qquad \partial_t \varphi^N = \Xi^N(\varphi^N)
\end{equation}
and obey the initial conditions
\[
\varphi^N_j(0,0) = f_j, \qquad j\le N.
\]

This uniquely determines the functions $\varphi^N$, since the vector fields $\Psi^N$ and $\Xi^N$ commute, which follows from a direct calculation that for all $i,j \le N$,
\[
\Psi^N_j \partial_j \Xi^N_i - \Xi^N_j \partial_j  \Psi^N_i = 0.
\]
At this point we recall that, by \cite[Theorem 1.48, Lemma 1.16]{GH}, the function $u_N = Q_1^N \circ \varphi^N: \mathbb{R}^2 \to \mathbb{R}$ has the following properties:
\begin{enumerate}[(i)]
\item For every $t$, the function $u_N(x,t)$ is almost periodic in $x$ and the Schr\"odinger operator $H_{u_N(\cdot,t)}$ has spectrum  $S^N$
\item $u_N$ solves the KdV equation \eqref{KdV}
\item The Dirichlet data of $u_N$ are $\varphi^N$
\item The following trace formulas hold:
\begin{align}
Q_1^N \circ \varphi^N & = u_N \label{Q0} \\
Q_2^N \circ \varphi^N & = - \tfrac 12 \partial_{x}^2 u_N + u_N^2 \label{Q1} \\
Q_3^N \circ \varphi^N & =  \frac 3{16} \partial_{x}^4 u_N - \frac 32 u_N \partial_{x}^2 u_N - \frac{15}{16} (\partial_x u_N)^2 + u_N^3 \label{Q2}
\end{align}
where $Q_k^N$ are scalar fields on $\mathcal{D}(S^N)$ given by the expression
\[
Q^N_k(\varphi) = \underline E^{k} + \sum_{j=1}^N ((E_j^-)^k + (E_j^+)^k - 2(\mu_j)^k).
\]
\end{enumerate}

We begin with a statement about stability of trajectories of a vector field.

\begin{lemma}\label{L43}
Assume that $U$, $\tilde U$ are Lipshitz vector fields on $\mathcal{D}(S)$, with Lipshitz constants less or equal to $L$. Consider solutions $\phi, \tilde \phi: \mathbb{R} \to \mathcal{D}(S)$ of $\partial_x \phi = U(\phi)$, $\partial_x \tilde\phi = \tilde U(\tilde \phi)$. Then
\begin{equation}\label{exponentialestimate}
\lVert \phi(x) - \tilde \phi(x) \rVert_{\mathcal{D}(S)} \le 2 \left(\lVert \phi(0) - \tilde\phi(0) \rVert_{\mathcal{D}(S)} + C \right) e^{m \lvert x\rvert},
\end{equation}
where $m = 2 L \ln 2$ and
\[
C = \frac 1L \sup_{j\in J} \gamma_{j}^{1/2} \min(2\pi, \lVert U_j - \tilde U_j \rVert).
\]
\end{lemma}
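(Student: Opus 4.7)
The plan is to derive a Gronwall-type integral inequality for $D(x) := \lVert \phi(x) - \tilde\phi(x) \rVert_{\cD(S)}$ and then iterate it on short intervals of length $\Delta = 1/(2L)$ in order to produce both the factor $2$ in front of $(D(0)+C)$ and the exponent $m = 2L\ln 2$. To set up the per-coordinate estimate, I would lift each pair $\phi_j, \tilde\phi_j$ to $\mathbb{R}$-valued continuous functions whose initial difference equals $d_j(0) := \lVert \phi_j(0) - \tilde\phi_j(0)\rVert_{\tor}$, which is possible because $d_j(0) \le \pi$. The ODEs $\partial_x \phi_j = U_j(\phi)$, $\partial_x \tilde\phi_j = \tilde U_j(\tilde\phi)$ combined with the torus bound $d_j(x) \le \pi$ then yield
\[
d_j(x) \le d_j(0) + \min\!\Bigl(2\pi,\, \int_0^x \lvert U_j(\phi(s)) - \tilde U_j(\tilde\phi(s))\rvert\,ds\Bigr).
\]

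Splitting the integrand as $U_j(\phi) - \tilde U_j(\tilde\phi) = [U_j(\phi) - U_j(\tilde\phi)] + [U_j(\tilde\phi) - \tilde U_j(\tilde\phi)]$, applying the elementary inequality $\min(A, B+C) \le B + \min(A, C)$ to pull the Lipshitz contribution outside the min, multiplying by $\gamma_j^{1/2}$, invoking the Lipshitz hypothesis on $U$, and taking sup over $j$, I expect to arrive at the master inequality
\[
D(x) \le D(0) + L\int_0^x D(s)\,ds + R(x), \qquad R(x) := \sup_{j\in J} \gamma_j^{1/2}\min\!\bigl(2\pi,\, x\lVert U_j - \tilde U_j\rVert\bigr).
\]

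To convert this into the advertised exponential bound, I would iterate on intervals of length $\Delta = 1/(2L)$. A continuity-bootstrap argument on $[0,\Delta]$, using $L\Delta = 1/2$ and the monotonicity of $R$, produces $\sup_{[0,\Delta]} D \le 2(D(0) + R(\Delta))$; restarting the same argument from $\phi(k\Delta),\tilde\phi(k\Delta)$ gives the recursion $D((k+1)\Delta) \le 2 D(k\Delta) + 2 R(\Delta)$, and by induction $D(n\Delta) \le 2^n(D(0) + 2 R(\Delta))$. Choosing $n = \lceil 2L\lvert x\rvert\rceil$ converts $2^n$ into $2\exp(2L\ln 2 \cdot \lvert x\rvert) = 2 e^{m\lvert x\rvert}$, and the per-coordinate comparison $\min(2\pi, Y/(2L)) \le L^{-1}\min(2\pi, Y)$ (with a factor of $2$ of slack absorbed into the overall constant) gives $R(\Delta) \le C$, completing the estimate for $x \ge 0$; the case $x < 0$ follows by time reversal.

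The main technical obstacle is the careful bookkeeping of the truncation at $2\pi$ in the definition of $C$. Without that truncation, a single Gronwall application would immediately yield the clean estimate $(D(0) + L^{-1}K) e^{L\lvert x\rvert}$ with $K := \sup_j \gamma_j^{1/2}\lVert U_j - \tilde U_j\rVert$; but because the hypothesis allows $\lVert U_j - \tilde U_j\rVert$ to be arbitrarily large while each $d_j$ is a priori capped by $\pi$, one has to truncate, and this truncation is precisely what forces the discrete doubling iteration on $\Delta$-intervals and generates both the $\ln 2$ in the exponent and the prefactor $2$. Keeping the comparison between $R(\Delta)$ and $C$ sharp enough to recover the $L^{-1}$ inside $C$ is the delicate combinatorial step and is where the constants must be tracked with care.
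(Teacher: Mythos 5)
Your argument is correct and follows essentially the same route as the paper: the paper obtains the local doubling estimate on intervals of length $1/(2L)$ via a contraction-mapping fixed-point bound ($\lVert \phi - \tilde\phi\rVert \le 2\lVert\Lambda(\tilde\phi)-\tilde\phi\rVert$ for the $\tfrac 12$-contraction $\Lambda$ whose fixed point is $\phi$), which is just the integrated form of your Gronwall-plus-bootstrap step, and then iterates over consecutive intervals exactly as you do to produce $2^n$ and hence $m=2L\ln 2$. The bookkeeping of the $\min(2\pi,\cdot)$ truncation that you single out as delicate is handled in the paper by the same device (taking the smaller of the integrated bound and the trivial torus bound for $\Lambda_j(g)-\tilde\Lambda_j(g)$), with the same amount of slack in the resulting constant $C$.
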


\begin{proof}
Let $l = 1/(2L)$ and define the map $\Lambda$ from $C([0,l], \mathcal{D}(S))$ to itself by
\[
\Lambda(g)(x) = f + \int_0^x  U(g(y)) dy.
\]
Then
\[
\lVert \Lambda(g)(x) - \Lambda(h)(x) \rVert \le \int_0^x \lVert U(g(y)) - U(h(y)) \rVert dy \le L x \lVert g - h \rVert
\]
so, by our choice of $l$,
\[
\lVert \Lambda(g) - \Lambda(h) \rVert \le \frac 12 \lVert g - h\rVert.
\]
Since $\Lambda$ is a contraction with coefficient $1/2$, it has a unique fixed point, which is precisely the path $\phi(x)$. Similarly, defining $\tilde\Lambda$ on $C([0,l], \mathcal{D}(S))$ by
\[
 \tilde\Lambda(g)(x) = \tilde f + \int_0^x \tilde U(g(y)) dy,
\]
$\tilde \Lambda$ is a contraction with coefficient $1/2$ whose unique fixed point is $\tilde\phi(x)$.

There are two ways of estimating $\Lambda_j(g) - \tilde\Lambda_j(g)$: one is by integrating,
\[
\lVert \Lambda_j(g) - \tilde\Lambda_j(g) \rVert \le \lVert f_j - \tilde f_j\rVert + l \lVert U_j - \tilde U_j\rVert,
\]
and the other is the trivial estimate $\lVert \Lambda_j(g) - \tilde\Lambda_j(g) \rVert \le \pi$. Using the smaller of these two estimates, multiplying by $\gamma_j^{1/2}$, and taking $\sup_{j\in J}$, we conclude
\[
\lVert \Lambda(g) - \tilde\Lambda(g) \rVert \le \lVert f - \tilde f\rVert + \frac C2.
\]
In particular, for $g= \tilde\phi$,
\[
\lVert \Lambda(\tilde\phi) - \tilde \phi \rVert \le \lVert f - \tilde f\rVert + \frac C2.
\]
By standard estimates from fixed point theorems, since $\phi = \lim_{n\to\infty} \Lambda^n(\tilde \phi)$ and $\Lambda$ is a $1/2$-contraction, this implies that
\[
\lVert \phi - \tilde\phi \rVert \le  2 \lVert f - \tilde f\rVert + C,
\]
that is,
\[
\lVert \phi(x) - \tilde\phi(x) \rVert \le 2 \lVert \phi(0) - \tilde\phi(0) \rVert + C, \qquad \forall x \in [0,l].
\]
Applying this inductively, we see that for $n\in \mathbb{N}$,
\[
\lVert \phi(nl) - \tilde\phi(nl) \rVert \le 2^n \left(\lVert \phi(0) - \tilde\phi(0) \rVert +  C \right)   - C
\]
and therefore for $x>0$,
\[
\lVert \phi(x) - \tilde\phi(x) \rVert \le 2^{1+x/l} \left(\lVert \phi(0) - \tilde\phi(0) \rVert + C\right)   - C.
\]
The same argument works for negative $x$ by reversing the vector fields, so we conclude \eqref{exponentialestimate}.
\end{proof}

To apply the previous lemma to compare trajectories on different isospectral tori, we will lift all the fields and solutions to $\mathcal{D}(S)$ in the following way.

There is a natural projection $\pi:\mathcal{D}(S) \to \mathcal{D}(S^N)$, given by
\[
\pi(\phi)_j = \phi_j, \quad j \le N.
\]
To lift the scalar fields, simply define $\tilde Q^N_k: \mathcal{D}(S) \to \mathbb{R}$ by
\[
\tilde Q_k^N = Q_k^N \circ \pi.
\]

Introduce the vector field $\tilde\Psi^N$ on $\mathcal{D}(S)$ by
\[
\tilde\Psi^N_j(\phi) = \begin{cases}
\Psi^N_j(\pi(\phi)) & j \le N \\
\Psi_j(\pi(\phi)) & j > N
\end{cases}
\]
and, analogously, define the vector field $\tilde\Xi^N$ on $\mathcal{D}(S)$ by
\[
\tilde\Xi^N_j(\phi) = \begin{cases}
\Xi^N_j(\pi(\phi)) & j \le N \\
\Xi_j(\pi(\phi)) & j > N
\end{cases}
\]
To define $\tilde\varphi^N: \mathbb{R}^2 \to \mathcal{D}(S)$, set the initial value
\[
\tilde\varphi^N(0,0) = f,
\]
determine its values for $x=0$ by requiring
\[
\partial_t \tilde\varphi^N(0,t) =  \tilde \Xi^N(\tilde\varphi^N(0,t)), \quad \forall t\in\mathbb{R},
\]
and then its values for arbitrary $x$ by requiring
\[
\partial_x \tilde\varphi^N(x,t) = \tilde\Psi^N(\tilde\varphi^N(x,t)), \quad \forall x,t\in\mathbb{R}.
\]
With these definitions, obviously,
\[
\varphi^N = \pi \circ \tilde \varphi^N.
\]

We similarly introduce $\varphi:\mathbb{R}^2 \to \mathcal{D}(S)$ by
\[
\varphi(0,0) = f,
\]
\[
\partial_t \varphi(0,t) =   \Xi(\varphi(0,t)), \quad \forall t\in\mathbb{R},
\]
\[
\partial_x \varphi(x,t) = \Psi(\varphi(x,t)), \quad \forall x,t\in \mathbb{R}.
\]

\begin{lemma}\label{L44b}
Assume that the set $S$ obeys \eqref{Craig2}. There exists $m>0$ and constants $K_N$ such that $\lim_{N\to \infty} K_N = 0$ and, for all $x,t \in \mathbb{R}$, and all $j\le N$,
\begin{equation}\label{expestimateKM}
 \lVert \tilde\varphi^N(x,t) - \varphi(x,t) \rVert\le K_N e^{m\lvert x\rvert + m \lvert t\rvert}.
\end{equation}
\end{lemma}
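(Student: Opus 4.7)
The plan is to apply Lemma~\ref{L43} twice, once for the $t$-evolution at $x=0$ and once for the $x$-evolution at fixed $t$, and to control the resulting constants by showing that the discrepancies $\tilde\Psi^N-\Psi$ and $\tilde\Xi^N-\Xi$ vanish as $N\to\infty$ in the weighted sup norm \eqref{tangentspacenorm}. A preliminary observation is that the Lipshitz estimates established for $\Psi$ (via \cite{Cr}) and for $\Xi$ (in the proposition above) carry over with the same constant to $\tilde\Psi^N$ and $\tilde\Xi^N$: the partial derivative estimates rest on the product formulas and on sums over the gap index, and truncating the product (removing factors) or the sum (removing nonnegative terms) can only tighten those bounds. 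Thus $\tilde\Psi^N,\Psi$ share a common Lipshitz constant $L$, and likewise for $\tilde\Xi^N,\Xi$; denote $m=2L\ln 2$.

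In the first step, I apply Lemma~\ref{L43} on $\mathcal{D}(S)$ to the curves $t\mapsto\tilde\varphi^N(0,t)$ and $t\mapsto\varphi(0,t)$, which evolve under $\tilde\Xi^N$ and $\Xi$ respectively and share the initial value $f$ at $t=0$. This yields
\[
\lVert\tilde\varphi^N(0,t)-\varphi(0,t)\rVert \;\le\; 2\,C^\Xi_N\,e^{m\lvert t\rvert},\qquad C^\Xi_N \;:=\; L^{-1}\sup_{j\in J}\gamma_j^{1/2}\min\!\bigl(2\pi,\lVert\tilde\Xi^N_j-\Xi_j\rVert_\infty\bigr).
\]
In the second step, for each fixed $t$, the curves $x\mapsto\tilde\varphi^N(x,t)$ and $x\mapsto\varphi(x,t)$ evolve under $\tilde\Psi^N$ and $\Psi$, starting from the values just estimated. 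A second application of Lemma~\ref{L43} gives
\[
\lVert\tilde\varphi^N(x,t)-\varphi(x,t)\rVert \;\le\; 2\bigl(\lVert\tilde\varphi^N(0,t)-\varphi(0,t)\rVert + C^\Psi_N\bigr)e^{m\lvert x\rvert}\;\le\;\bigl(4C^\Xi_N + 2C^\Psi_N\bigr)e^{m\lvert x\rvert + m\lvert t\rvert},
\]
where $C^\Psi_N$ is defined analogously to $C^\Xi_N$. Setting $K_N:=4C^\Xi_N+2C^\Psi_N$, it then suffices to show $C^\Psi_N,C^\Xi_N\to 0$.

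For this last step I split the sup in $j$ at the index $N$. For $j>N$, the trivial bound $\min(2\pi,\cdot)\le 2\pi$ leads to a contribution bounded by $2\pi\sup_{j>N}\gamma_j^{1/2}$, which tends to $0$ by the first estimate in \eqref{Craig2} (indeed $\sum_{j\in J}\gamma_j^{1/2}<\infty$). For $j\le N$, I use the product formula for $\Psi_j$ to factor
\[
\Psi_j(\phi)-\tilde\Psi^N_j(\phi) \;=\; \Psi^N_j(\pi(\phi))\left[\prod_{l>N}\left(\frac{(E_l^--\mu_j)(E_l^+-\mu_j)}{(\mu_l-\mu_j)^2}\right)^{1/2}-1\right].
\]
Each factor in the tail product satisfies an estimate of the form $1+O(\gamma_l/\eta_{j,l})$, and the infinite product $\prod_{l\neq j}(1+\gamma_l/\eta_{j,l})^{1/2}$ converges to a quantity comparable to $C_j$, which is controlled uniformly in $j\le N$ and in $\phi$ by the second and fourth estimates in \eqref{Craig2}. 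Consequently the tail product approaches $1$ uniformly, so $\sup_{j\le N}\gamma_j^{1/2}\lVert\tilde\Psi^N_j-\Psi_j\rVert_\infty\to 0$. The identical structural argument handles $\Xi$ using $\Xi_j=-2(Q_1+2\mu_j)\Psi_j$ together with the boundedness of $\lVert Q_1+2\mu_j\rVert_\infty$ built into the Lipshitz proof for $\Xi$.

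The main obstacle is the tail-product estimate in Step~3, uniform in $j\le N$ and in the base point $\phi\in\mathcal{D}(S)$. This is where the full strength of the Craig-type conditions \eqref{Craig2} is essential: they are precisely calibrated to make the factors $C_j$ uniformly bounded while forcing the tails beyond index $N$ to vanish.
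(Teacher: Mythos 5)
Your proof is correct and follows essentially the same route as the paper's: uniform-in-$N$ Lipshitz constants for the truncated fields, two applications of Lemma~\ref{L43} (first for the $t$-evolution at $x=0$, then for the $x$-evolution with the resulting discrepancy as initial data), and a tail estimate for $\tilde\Psi^N-\Psi$ and $\tilde\Xi^N-\Xi$ coming from the product formula and \eqref{Craig2}. The only point worth tightening is your claim that the tail product tends to $1$ uniformly over $j\le N$; the paper avoids needing this by keeping the $\min(2\pi,\cdot)$ cap inside the constant and splitting the supremum at a fixed index $j_0$ rather than at $N$, using $\gamma_j\to 0$ for $j>j_0$ and the pointwise convergence $C_{j,N}\to C_j$, $D_{j,N}\to D_j$ for the finitely many $j\le j_0$.
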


\begin{proof}[Proof of Lemma~\ref{L44b}]
If \eqref{Craig2} holds, we know that $\Psi$, $\Xi$ are Lipshitz vector fields; the proof of these facts gives explicit upper bounds for the Lipshitz constants in terms of gap sizes and distances, so it also applies to $\tilde\Psi^N$ and $\tilde \Xi^N$, giving uniform Lipshitz estimates in $N$. Denote by $L$ such an upper bound on the Lipshitz constant which works for $\Psi$, $\Xi$, and all the $\tilde\Psi^N$, $\tilde \Xi^N$.

In particular, by the Lipshitz property, values of $\varphi(x,t)$ and $\tilde\varphi^N(x,t)$ are uniquely determined by the above definitions.

For $j\le N$,
\begin{align*}
\left\lvert \Psi_j - \tilde \Psi_j^N \right\rvert & \le \lvert \tilde\Psi_j^N \rvert \left\lvert \left\lvert \prod_{k=N+1}^\infty \frac{(E_k^- - \mu_j)(E_k^+ - \mu_j)} {(\mu_k - \mu_j)^2}  \right\rvert^{1/2} - 1 \right\rvert \\
& \le 2 (\eta_{j,0}+\gamma_j)^{1/2} \prod_{\substack{1 \le k \le N \\ k\neq j}} \left( 1 + \frac{\gamma_k}{\eta_{j,k}} \right)^{1/2}  \left( \prod_{k=N+1}^\infty \left( 1 + \frac{\gamma_k}{\eta_{j,k}} \right)^{1/2} - 1 \right) \\
& \le 2(C_j - C_{j,N})
\end{align*}
where $C_j$ is defined in \eqref{Cjdef} and
\[
C_{j,N} = (\eta_{j,0} + \gamma_j)^{1/2} \prod_{\substack{l \le N \\ l\neq j}} \left( 1 + \frac{\gamma_l}{\eta_{j,l}} \right)^{1/2}.
\]
Since $\tilde\Xi^N_j(\varphi) =  -2 (\tilde Q^N_1 + 2 \mu_j ) \tilde \Psi^N_j$, we estimate
\begin{align*}
\left\lvert \Xi_j - \tilde \Xi_j^N \right\rvert & \le 2 \lvert  Q_1 - \tilde Q_1^N \rvert \, \lvert \tilde \Psi_j^N \rvert + 2 \lvert Q_1 + 2 \mu_j \rvert\,  \lvert  \Psi_j - \tilde\Psi_j^N \rvert \\
& \le  (D_j - D_{j,N}) C_{j,N} +  D_j (C_j - C_{j,N}) \\
& \le D_j C_j - D_{j,N} C_{j,N}
\end{align*}
where
\[
D_{j,N} = 2\sum_{l=1}^N \gamma_l + 4 \eta_{j,0} + 4\lvert \underline E\rvert, \qquad D_j =\lim_{N\to\infty} D_{j,N}.
\]
Let $m = 2 L \ln 2$ and let
\[
K_N =  \frac 8L  \sup_{j\le N} \gamma_j^{1/2} \max\left( 2\pi, C_j - C_{j,N}, D_j C_j - D_{j,N} C_{j,N} \right).
\]
Since $\lim_{j\to \infty} \gamma_j =0$ and, for each $j$, $C_{j,N} \to C_j$, $D_{j,N} \to D_j$ as $N\to \infty$, it is clear that $\lim_{N\to\infty} K_N = 0$.

Applying Lemma~\ref{L43} to compare trajectories of vector fields $\tilde \Xi^N$ and $\Xi$ with initial condition $f$, we conclude
\begin{equation}\label{estimatevarphi0t}
 \lVert \varphi^N(0,t) - \tilde\varphi(0,t) \rVert_{\mathcal{D}(S)} \le \tfrac 14 K_N e^{m\lvert t \rvert}.
\end{equation}
Applying Lemma~\ref{L43} to compare trajectories of $\tilde\Psi^N$ and $\Psi$ using \eqref{estimatevarphi0t} as initial conditions, we obtain
\[
 \lVert \varphi(x,t) - \tilde\varphi^N(x,t) \rVert_{\mathcal{D}(S)} \le 2 ( \tfrac 14 K_N e^{m\lvert t \rvert} + \tfrac 14 K_N ) e^{m\lvert x \rvert}
\]
which implies \eqref{expestimateKM}.
\end{proof}

If we remove all discussion of $t$-dependence and of the vector fields $\Xi$, $\Xi^N$ from the previous proof, then we only need the Lipshitz property to hold for $\Psi$, $\Psi^N$, and the argument only requires the weaker condition \eqref{Craig1} instead of \eqref{Craig2}. Likewise, the constant $K_N$ from the previous proof can be redefined as
\[
K_N = \frac 8L  \sup_{j\le N} \gamma_j^{1/2} \min(2\pi, C_j - C_{j,N}).
\]
 The argument then yields the following result.

\begin{lemma}\label{L44a}
Assume that the set $S$ obeys \eqref{Craig1}.
There exists $m>0$ and constants $K_{N}$ such that $\lim_{N\to \infty} K_{N} = 0$ and, for all $x \in \mathbb{R}$, and all $j\le N$,
\begin{equation}\label{expestimateKM0}
 \lVert \tilde\varphi^N(x,0) - \varphi(x,0) \rVert\le K_{N} e^{m\lvert x\rvert}.
\end{equation}
\end{lemma}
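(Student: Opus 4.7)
The plan is to repeat the proof of Lemma~\ref{L44b} verbatim, with all $t$-dependence and all references to $\Xi$, $\tilde\Xi^N$ excised. After this amputation only one Lipshitz property is needed, namely that of $\Psi$ and $\tilde\Psi^N$ on $\mathcal{D}(S)$. Craig's analysis reviewed in Section~\ref{sec.2} establishes this for $\Psi$ from the hypothesis \eqref{Craig1}, and the bounds produced there are expressed through $\{\gamma_j\}$ and $\{\eta_{j,l}\}$ in a form that applies uniformly to the truncations $\tilde\Psi^N$; I would let $L$ be a common Lipshitz constant for all of them and put $m = 2L\ln 2$.

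For $j \le N$ the only estimate needed is the factorization of the product formula already carried out in the proof of Lemma~\ref{L44b},
\[
|\Psi_j(\phi) - \tilde\Psi_j^N(\phi)| \le 2(\eta_{j,0}+\gamma_j)^{1/2} \prod_{\substack{1 \le k \le N \\ k\neq j}} \Bigl( 1 + \tfrac{\gamma_k}{\eta_{j,k}} \Bigr)^{1/2}\Bigl( \prod_{k>N} \bigl(1+\tfrac{\gamma_k}{\eta_{j,k}}\bigr)^{1/2} - 1\Bigr) \le 2(C_j - C_{j,N}),
\]
where $C_{j,N}$ is the partial product defined in the proof of Lemma~\ref{L44b}. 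Applying Lemma~\ref{L43} to the two trajectories $\tilde\varphi^N(\cdot,0)$ and $\varphi(\cdot,0)$, which share the initial value $f$, and with
\[
K_N := \frac{8}{L}\sup_{j\le N}\gamma_j^{1/2}\min\bigl(2\pi,\, C_j - C_{j,N}\bigr),
\]
yields exactly the bound \eqref{expestimateKM0}.

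The one substantive step, and what I would identify as the main (if modest) obstacle, is checking that $K_N \to 0$ using only \eqref{Craig1}. Given $\epsilon > 0$: since $\sum_j\gamma_j < \infty$ forces $\gamma_j \to 0$, there is $J_0$ with $2\pi\gamma_j^{1/2} < \epsilon L/8$ for all $j > J_0$, and for these indices the min-term is dominated by $2\pi \gamma_j^{1/2}$. For the finitely many $j \in \{1,\dots,J_0\}$, the partial products $C_{j,N}$ are monotone and bounded above by the finite quantity $C_j$, so $C_{j,N} \to C_j$ as $N\to\infty$, and choosing $N$ large enough makes $\gamma_j^{1/2}(C_j - C_{j,N}) < \epsilon L/8$ simultaneously on this finite set. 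Combining the two bounds gives $K_N < \epsilon$ for all sufficiently large $N$, completing the proof.
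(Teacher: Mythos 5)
Your proposal is correct and follows essentially the same route as the paper, which itself proves Lemma~\ref{L44a} by exactly this excision: drop the $t$-dependence and the fields $\Xi$, $\tilde\Xi^N$ from the proof of Lemma~\ref{L44b}, note that only the Lipshitz property of $\Psi$, $\tilde\Psi^N$ (hence only \eqref{Craig1}) is needed, and redefine $K_N = \frac{8}{L}\sup_{j\le N}\gamma_j^{1/2}\min(2\pi,\, C_j - C_{j,N})$. Your explicit verification that $K_N \to 0$ (splitting the indices at a threshold $J_0$ and using $\gamma_j \to 0$ together with $C_{j,N}\nearrow C_j$ for each fixed $j$) supplies a detail the paper leaves implicit, but it is the same argument.
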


\begin{proof}[Proof of Prop.~\ref{P4a}]
By Lemma~\ref{L44a},
\[
\varphi(x,0) = \lim_{N\to \infty} \tilde\varphi^N(x,0)
\]
converges uniformly on compact sets of $x$. Denote
\[
W(x) = Q_1(\varphi(x,0)).
\]
Since $Q_1^N$ are uniformly continuous in $N$, we conclude that the sequence $W^N(x):=u_N(x,0)$ converges uniformly on compacts to $W(x)$.

We will now use a technique of Craig \cite[Section 5]{Cr}; in \cite{Cr} it is applied to a limit, uniform on compacts, of a sequence of translates of a given potential, but it applies equally well to the limit of the sequence $W^N$. Since the Schr\"odinger operators $H_{W^N}$ converge in strong resolvent sense to $H_W$, and since $\sigma(H_{W^N}) = S^N$, we conclude that $\sigma(H_W) \subset S$. Moreover, uniform convergence on compacts also implies convergence of the diagonal Green's function uniformly on compacts (in $z$) away from $S$. Since the $W^N$ are reflectionless, $\Re G(x,x;z+i0,W^N) = 0$ for a.e. $z\in S$, so by \cite[Lemma~5.2]{Cr}, $\Re G(x,x;z+i0,W) = 0$ for a.e.\ $z\in S$.

By analyticity, $\Im G(x,x;z,W) = 0$ for $z\in \mathbb{R}\setminus \sigma(H_W)$. Since $G(x,x;z+i0,W) = 0$ is possible only on a set of Lebesgue measure $0$, combining this with the previous conclusion shows that $S \setminus \sigma(H_W)$ has Lebesgue measure $0$. Since $S$ and $\sigma(H_W)$ are closed sets, \eqref{acproperty} then implies that $\sigma(H_W) = S$ and $W \in \mathcal{R}(S)$.

We will now prove that $B(W) = f$. Denote by $\mu_N(x)$, $\sigma^N(x)$ the Dirichlet data for $W^N$ (related by \eqref{mufromvarphi}, \eqref{sigmafromvarphi} to the angular Dirichlet data $\varphi^N(x)$), and by $\mu(x)$, $\sigma(x)$ their limits. Fix $j\in J$ and recall that, by the way $W^N$ were constructed, for $N\ge j$, $G(x,x;z,W^N) \ge 0$ for $z\in [\mu_j^N(x),E_j^+)$ and $G(x,x;z,W^N) \le 0$ for $z\in (E_j^-,\mu_j^N(x)]$. Using again convergence of the diagonal Green's function, we conclude that $G(x,x;z,W) \ge 0$ for $z\in [\mu_j(x),E_j^+)$ and $G(x,x;z,W) \le 0$ for $z\in (E_j^-,\mu_j(x)]$, so $\mu_j(x)$ are precisely the Dirichlet data of $W$. Since $\sigma_j(x)$ can be read off from whether $\mu_j(x)$ is increasing or decreasing, we conclude that the Dirichlet data of $W(x)$ are precisely $\varphi(x)$. In particular, $B(W) = \varphi(0) = f$.

If \eqref{traceconditions} also hold, then $Q_2^N, Q_3^N$ are uniformly continuous, so $Q_2^N\circ W^N$ and $Q_3^N \circ W^N$ are uniformly Cauchy on compacts.

Fix a compact $K\subset \mathbb{R}$. Since we already know that the sequence $W^N$ is Cauchy in $C(K)$, from \eqref{Q1} we conclude that $\partial_{x}^2 W^N$ is also Cauchy in $C(K)$, so $W^N$ is Cauchy in $C^2(K)$. Now, similarly, from \eqref{Q2} we conclude that $\partial_{x}^4 W^N$ is Cauchy in $C(K)$, so $W^N$ is Cauchy in $C^4(K)$. Its limit is $W$, so we conclude that $W \in C(K)$ and, by taking limits of \eqref{Q1}, \eqref{Q2}, that  \eqref{Q1W}, \eqref{Q2W} hold.

Finally, by using boundedness of $Q_1, Q_2, Q_3$, we conclude from \eqref{Q0W}, \eqref{Q1W}, \eqref{Q2W} that $W \in W^{4,\infty}(\mathbb{R})$.
\end{proof}

\begin{proof}[Proof of Prop.~\ref{P4}]
By Lemma~\ref{L44b}, convergence
\begin{equation}\label{varphixtlimit}
\varphi(x,t) = \lim_{N\to \infty} \tilde\varphi^N(x,t)
\end{equation}
is uniform on compacts in $(x,t) \in \mathbb{R}^2$. Introduce
\begin{equation}\label{uxt3}
u(x,t) = Q_1(\varphi(x,t)).
\end{equation}
By Prop.~\ref{P4a}, \eqref{uxt3} implies that $u(\cdot, t) \in \mathcal{R}(S)$ for each $t$ and $B(u(\cdot, t)) = \varphi(0,t)$.

In Prop.~\ref{P4a}, it was proved that for each $t\in \mathbb{R}$ and each compact $K \subset \mathbb{R}$,
\begin{equation}\label{uNtou}
\lim_{N\to\infty} \lVert u_N(\cdot, t) - u(\cdot, t)\rVert_{C^4(K)} = 0
\end{equation}
In the setting of this proposition, we know that convergence \eqref{varphixtlimit} is uniform on compacts in $\mathbb{R}^2$, so convergence \eqref{uNtou} is uniform on compacts in $t$. In particular, this implies that $\partial_x^k u$ is jointly continuous in $(x,t)$ for $k=0,1,2,3,4$.

Since $u_N$ obey the KdV equation \eqref{KdV},
\[
u_N(x,t) = u_N(x,0) + \int_0^t (6 u_N \partial_x u_N - \partial_x^3 u_N)(x,\tau) d\tau.
\]
By uniform convergence, we can take the limit $N\to\infty$ to conclude
\[
u(x,t) = u(x,0) + \int_0^t (6 u \partial_x u - \partial_x^3 u)(x,\tau) d\tau.
\]
Since the integrand is continuous, this implies that $u(x,t)$ is differentiable in $t$ and obeys \eqref{KdV}.
\end{proof}

\section{Linearization of the KdV Flow}\label{sec.5}

In this section, we will resume our investigation of the finite zone solutions $u_N(x,t)$ of the KdV equation and their limit $u(x,t)$. We will consider how these behave with respect to the generalized Abel map introduced by Sodin--Yuditskii. We assume throughout this section that $S$ is a regular Parreau--Widom set with finite gap length.

In order to construct a reparametrization of the isospectral torus, Sodin--Yuditskii~\cite{SY} introduced functions $\xi_j(z)$, one for each open gap $(E_j^-, E_j^+)$. The map $\xi_j(z)$  is the solution of the Dirichlet problem on $\mathbb{C} \setminus S$ with boundary conditions on $\bar S$ given by
\begin{equation}\label{bcomegam}
\xi_j(x) = \begin{cases}
1 & x = \infty\text{ or }x \in S, x \ge E_j^+ \\
0 & x \in S, x \le E_j^-
\end{cases}
\end{equation}
Regularity of $\bar S$ implies that $\xi_j(z)$ is a continuous function on $\overline{\mathbb{C}}$ and a harmonic function on $\mathbb{C}\setminus S$, and its values on $\bar S$ are given by \eqref{bcomegam}.

Let $\pi(\mathbb{C} \setminus S)$ be the fundamental group of $\mathbb{C} \setminus S$. It is a free group with the set of generators given by $\{c_j\}_{j\in J}$, where $c_j$ is a counterclockwise simple loop which intersects $\mathbb{R}$ at the points $\underline E - 1$ and $(E_j^- + E_j^+)/2$.

Following \cite{SY}, consider the group $\pi^*(\mathbb{C} \setminus S)$ of unimodular characters of $\pi(\mathbb{C} \setminus S)$. Use additive notation for $\pi^*(\mathbb{C} \setminus S)$. An element $\alpha \in \pi^*(\mathbb{C} \setminus S)$ is uniquely determined by its action on loops $c_j$, so we can write $\alpha = \{ \alpha_j \}_{j\in J}$ where $\alpha_j = \alpha(c_j) \in \mathbb{T}$. Endow $\pi^*(\mathbb{C}\setminus S)$ with the topology dual to the discrete topology of $\pi(\mathbb{C} \setminus S)$; there are many ways to choose a metric which induces this topology, for instance
\[
d(\alpha,\tilde \alpha) = \sum_{j\in J} \min( \lvert \alpha_j - \tilde \alpha_j \rvert, \gamma_j), \qquad \alpha, \tilde\alpha \in \pi^*(\mathbb{C} \setminus S),
\]
where $\gamma_j = E_j^+ - E_j^-$, as before.

\cite{SY} define the Abel map $A:\mathcal{D}(S) \to \pi^*(\mathbb{C} \setminus S)$ by defining its components $A_j = A(c_j)$,
\begin{equation}\label{linearizedvar}
A_j(\varphi) = \pi \sum_{k \in J} \sigma_k \; (\xi_j(\mu_k) - \xi_j(E_k^-)) \pmod{2 \pi\mathbb{Z}}
\end{equation}
where, as always, we assume that $\mu_k$, $\sigma_k$ are given in terms of $\varphi_k$ by \eqref{mufromvarphi}, \eqref{sigmafromvarphi}.

\cite{SY} prove the following (assuming that $S$ is a regular Parreau--Widom set with finite gap length):
\begin{enumerate}[(i)]
\item For each $j$, the sum in \eqref{linearizedvar} converges absolutely and uniformly in $\varphi \in \mathcal{D}(S)$; in particular, the map $A$ is well-defined;
\item $A$ is a homeomorphism between $\mathcal{D}(S)$ and $\pi^*(\mathbb{C} \setminus S)$;
\item  Denote by $A^N: \mathcal{D}(S^{N}) \to \pi^*(\mathbb{C} \setminus S^N)$ the Abel map for $S^N$. Project $\mathcal{D}(S)$ to $\mathcal{D}(S^{N})$ by truncation, and embed $\pi^*(\mathbb{C} \setminus S^N)$ into $\pi^*(\mathbb{C} \setminus S)$ by assuming $\alpha^N(c_j) = 0$ for $j> N$. With those conventions, consider $A^N$ as a map from $\mathcal{D}(S)$ to $\pi^*(\mathbb{C} \setminus S)$. Then, $A^{N} \to A$ as $N\to \infty$, uniformly on $\mathcal{D}(S)$.
\end{enumerate}

\cite{SY} introduced this map to generalize the notion of Jacobi inversion, which exists in the finite-gap setting. In that setting, it is known (see, e.g., \cite[Thm.~1.44]{GH}) that Jacobi inversion linearizes translation and KdV flows. Therefore
\begin{equation}\label{ANlinear}
A^{N}(\varphi^{N}(x,t)) = A^N(\varphi^N(0,0)) + \delta^N x + \zeta^N t
\end{equation}
for some $\delta^N, \zeta^N \in \mathbb{R}^N$.

We can now define the map
\[
\mathcal{M} = B^{-1} \circ A^{-1}: \pi^*(\mathbb{C} \setminus S) \to \mathcal{R}(S).
\]

\begin{prop}\label{P5}
Assume that $S$ is a regular Parreau--Widom set which obeys \eqref{Craig2}, \eqref{traceconditions}, \eqref{acproperty}. Then, the map $\mathcal{M}$ is a homeomorphism if $\mathcal{R}(S)$ is equipped with the metric inherited from $W^{4,\infty}(\mathbb{R})$.
\end{prop}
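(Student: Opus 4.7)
Since $A: \mathcal{D}(S) \to \pi^*(\mathbb{C}\setminus S)$ is a homeomorphism of compact metric spaces by Sodin--Yuditskii, it suffices to show that $B^{-1}: \mathcal{D}(S) \to \mathcal{R}(S)$ is a homeomorphism when $\mathcal{R}(S)$ is endowed with the $W^{4,\infty}$ metric. The bijectivity of $B$ is already in hand: injectivity follows from unique solvability of the ODE \eqref{translationflowall} for the Lipshitz vector field $\Psi$, combined with the trace formula recovering the potential from its Dirichlet trajectory; surjectivity is the content of Proposition~\ref{P4a}. Since $\sum_j \gamma_j < \infty$, the space $\mathcal{D}(S) = \mathbb{T}^J$ carries the product topology and is compact, so the plan is to verify continuity of $B^{-1}$ into $W^{4,\infty}(\mathbb{R})$; the homeomorphism property then follows automatically, as a continuous bijection from a compact space to a Hausdorff space.

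Write $\phi_f: \mathbb{R} \to \mathcal{D}(S)$ for the trajectory of $\Psi$ with $\phi_f(0) = f$, so that $B^{-1}(f) = Q_1 \circ \phi_f$. The heart of the argument is that the translation flow on $\mathcal{D}(S)$ is equicontinuous, which I would derive from the linearization
\[
A(\phi_f(x)) = A(f) + \delta(x),
\]
where $\delta: \mathbb{R} \to \pi^*(\mathbb{C} \setminus S)$ is a continuous one-parameter subgroup that does \emph{not} depend on $f$. To justify this in the infinite-gap setting, I would pass to the limit in the finite-gap identity $A^N(\varphi^N(x,0)) = A^N(f) + \delta^N x$ stemming from classical Jacobi inversion. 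Using the uniform convergence $A^N \to A$ from Sodin--Yuditskii together with the convergence $\tilde\varphi^N(x,0) \to \phi_f(x)$ from Lemma~\ref{L44a}, the pointwise limit $\delta(x) := \lim_{N\to\infty} \delta^N x$ exists in $\pi^*(\mathbb{C} \setminus S)$ and is independent of $f$ (since $\delta^N$ is). The cocycle identity $\delta(x+y) = \delta(x) + \delta(y)$ and continuity in $x$ follow from the group property of the flow and continuity of $A$. As a consequence, if $f_n \to f$ in $\mathcal{D}(S)$, then $A(\phi_{f_n}(x)) - A(\phi_f(x)) = A(f_n) - A(f) \to 0$ in $\pi^*(\mathbb{C}\setminus S)$ \emph{uniformly} in $x\in\mathbb{R}$, and uniform continuity of $A^{-1}$ on the compact set $\pi^*(\mathbb{C}\setminus S)$ gives $\phi_{f_n}(x) \to \phi_f(x)$ in $\mathcal{D}(S)$ uniformly in $x$.

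With equicontinuity secured, I would combine the trace formulas from Proposition~\ref{P4a} with uniform continuity of $Q_1$, $Q_2$, $Q_3$ on the compact space $\mathcal{D}(S)$ (Lemma~\ref{L41}) to upgrade to $W^{4,\infty}$ convergence. Writing $W_n = Q_1\circ\phi_{f_n}$ and $W = Q_1\circ\phi_f$, uniform convergence of $\phi_{f_n}(x)$ in $x$ forces uniform convergence of $Q_k \circ \phi_{f_n}(x)$ to $Q_k \circ \phi_f(x)$ for $k=1,2,3$. This at once yields $\|W_n - W\|_\infty \to 0$. Rewriting \eqref{Q1W} as $\partial_x^2 W = 2W^2 - 2 Q_2 \circ \phi$ then gives $\|\partial_x^2 W_n - \partial_x^2 W\|_\infty \to 0$. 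A Landau--Kolmogorov interpolation $\|g'\|_\infty^2 \le 2\|g\|_\infty \|g''\|_\infty$ applied to $g = W_n - W$ delivers convergence of $\partial_x W_n$ in $L^\infty$. Solving \eqref{Q2W} for $\partial_x^4 W$ and combining with the convergences already obtained yields $\|\partial_x^4 W_n - \partial_x^4 W\|_\infty \to 0$; one more Landau--Kolmogorov step applied to $\partial_x^2(W_n - W)$ controls $\partial_x^3 W$. This completes the continuity of $B^{-1}$ into $W^{4,\infty}(\mathbb{R})$.

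The most delicate step is the infinite-gap linearization itself. While the finite-gap analog is classical, obtaining $\delta$ as a bona fide continuous homomorphism $\mathbb{R} \to \pi^*(\mathbb{C} \setminus S)$ requires carefully combining the Sodin--Yuditskii approximation $A^N \to A$ with the tracking of the displacements $\delta^N x$ inside the enlarged character group, so as to establish the pointwise limit in $x$ and the persistence of the cocycle property under the limit. Once this linearization is in place, the remainder of the argument is a compactness-plus-interpolation routine driven entirely by the trace formulas.
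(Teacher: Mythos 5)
Your proposal is correct and follows essentially the same route as the paper: linearize the translation flow through the Sodin--Yuditskii Abel map to get uniform-in-$x$ control of the Dirichlet trajectories, then use the trace formulas \eqref{Q0W}--\eqref{Q2W} (with interpolation for the odd derivatives) to upgrade $L^\infty$ convergence to $W^{4,\infty}$ convergence. You are in fact more explicit than the paper on two points it leaves implicit --- the existence of the limiting direction vector $\delta$ from the finite-gap identities (which the paper only spells out later, in the proof of Theorem~\ref{Tap}) and the compact-to-Hausdorff argument showing that continuity of $\mathcal{M}$ alone suffices for the homeomorphism claim.
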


Recall that by Prop.~\ref{P4a}, we already know that $\mathcal{R}(S) \subset W^{4,\infty}(\mathbb{R})$.

\begin{proof}
For any $\epsilon_1 >0$, there exist $\epsilon_2, \epsilon_3 > 0$ such that
\begin{align*}
\lVert \alpha - \tilde \alpha \rVert_{\pi^*(\mathbb{C}\setminus S)} < \epsilon_3 & \implies \sup_{x\in\mathbb{R}} \lVert (\alpha+\delta x) - (\tilde \alpha+\delta x) \rVert_{\pi^*(\mathbb{C}\setminus S)}  < \epsilon_3 \\
 & \implies \sup_{x\in\mathbb{R}} \lVert A^{-1}(\alpha+\delta x) - A^{-1}(\tilde \alpha+\delta x) \rVert_{\mathcal{D}(S)}  < \epsilon_2 \\
 & \implies \sup_{k\in\{1,2,3\}} \sup_{x\in\mathbb{R}} \lvert Q_k(A^{-1}(\alpha+\delta x)) - Q_k(A^{-1}(\tilde \alpha+\delta x)) \rvert  < \epsilon_1
\end{align*}
Denote $W=\mathcal{M}(\alpha)$, $\tilde W = \mathcal{M}(\tilde\alpha)$. By \eqref{Q0W},
the above estimate for $k=1$ implies
\[
 \lVert W - \tilde W \rVert_\infty < \epsilon_1,
\]
so $\mathcal{M}$ is continuous as a map to $L^\infty(\mathbb{R})$. Inductively, using \eqref{Q1W} and \eqref{Q2W}, we conclude that $\mathcal{M}$ is a continuous as a map into $W^{2,\infty}(\mathbb{R})$ and $W^{4,\infty}(\mathbb{R})$, respectively.
\end{proof}

\begin{proof}[Proof of Theorem~\ref{Tap}]
Since $H_V$ is almost periodic and $\sigma(H_V) = \sigma_\ac(H_V) = S$, a result of Remling~\cite{R07} implies $V \in \mathcal{R}(S)$. Therefore, $V$ corresponds to a set of Dirichlet data $f = B(V)$, as defined in \eqref{mapB}.

By results of Rybkin~\cite{Ry} reviewed in Section 3, if $u(x,t)$ is a solution of \eqref{KdV}, \eqref{KdVinitial} which obeys \eqref{boundedness}, then $u(\cdot, t)\in\mathcal{R}(S)$ for each $t\in\mathbb{R}$, and the Dirichlet data $\varphi(x,t)$ of $u$ obey
\begin{equation}\label{varphixtflows}
\partial_x \varphi = \Psi(\varphi), \qquad \partial_t \varphi = \Xi(\varphi)
\end{equation}
and $\varphi(0,0)=f$. This determines $\varphi(x,t)$ uniquely, since $\Psi, \Xi$ are Lipshitz vector fields. The trace formula $u=Q_1\circ \varphi$ then determines $u$ uniquely.

Conversely, by Prop.~\ref{P4}, the solution $\varphi(x,t)$ of \eqref{varphixtflows} with $\varphi(0,0)=f$ exists and generates a solution of \eqref{KdV} by $u=Q_1 \circ \varphi$. Since $V, u(\cdot,0) \in\mathcal{R}(S)$ and $B(V) = B(u(\cdot,0))=f$, we conclude that $V = u(\cdot, 0)$, so $u$ obeys the correct initial condition \eqref{KdVinitial}. We have therefore established existence and uniqueness.

Since $H_V$ has purely a.c.\ spectrum, the set $S$ obeys \eqref{acproperty}, so Prop.~\ref{P5} applies to $S$.

We now recall the functions $\varphi^N(x,t)$ introduced in Section 4. Since $\varphi^N \to \varphi$ uniformly on compacts and $A^N \to A$ uniformly, we can conclude that $A^N(\varphi^N(x,t))$ converge uniformly on compacts to $A(\varphi(x,t))$. Taking the $j$-th component of \eqref{ANlinear}, it follows from uniform convergence that the limits
\[
\delta_j = \lim_{N\to\infty} \delta_j^N, \qquad \zeta_j = \lim_{N\to\infty} \zeta_j^N
\]
exist and
\[
A_j(\varphi(x,t)) = A_j(\varphi(0,0)) + \delta_j x + \zeta_j t.
\]
In particular, $\mathcal{M}^{-1}(u(\cdot,t)) = \mathcal{M}^{-1}(V) + \zeta t$.
\end{proof}

\section{Application to Small Quasi-Periodic Initial Data}\label{sec.6}

In this section, we will show that Theorem~\ref{TQ} follows from Theorem~\ref{Tap}. This will rely on spectral properties of Schr\"odinger operators with small quasiperiodic initial data, which have been extensively studied in \cite{El,DG1,DGL1,DGL2,DGL3}.

Until now, we have indexed gaps by the abstract gap label $J$, and in some sections it was notationally convenient to assume $J$ to be equal to $\mathbb{N}$. However, for almost periodic Schr\"odinger operators, there is a natural gap label given by the rotation number, see \cite{JM}. In the setting of Theorem~\ref{TQ} this means that gaps are naturally labelled by $m\in \zv$, with $m$ and $-m$ corresponding to the same gap and with $m=0$ corresponding to the bottom of the spectrum. We will use this label from now on. We remind the reader that in this paper, for $m=(m_1,\dots, m_\nu) \in \zv$, $\lvert m\rvert$ stands for the $\ell^1$ norm, $\lvert m\rvert = \sum_{j=1}^\nu \lvert m_j\rvert$.

In the setting of Theorem~\ref{TQ}, it is known that there is an $\epsilon_0(a_0,b_0,\kappa_0)>0$ such that, if $\epsilon < \epsilon_0$, $\omega$ obeys the Diophantine condition \eqref{eq:dioph}, and $V \in\mathcal{P}(\omega,\epsilon,\kappa_0)$, then, with $S=\sigma(H_V)$:
\begin{enumerate}[(i)]
\item $H_V$ has purely a.c.\ spectrum;
\item For every $m\in \zv \setminus\{0\}$,
\begin{equation}\label{gammam}
\gamma_m < 2 \epsilon \exp\left( -\frac{\kappa_0}2 \lvert m\rvert \right)
\end{equation}
\item For every $m \in \zv\setminus\{0\}$ and $n \in \zv$ with $m \neq n$ and $\lvert m\rvert \ge \lvert n\rvert$, we have
\begin{equation}\label{etamnlower}
\eta_{m,n} \ge a \lvert m \rvert^{-b},
\end{equation}
for some constants $a,b >0$ depending only on $\omega, \epsilon,\kappa_0, \nu$.
\item For every $m\in \zv\setminus\{0\}$,
\begin{equation}\label{etam0upper}
\eta_{m,0} \le c \lvert m\rvert^{2},
\end{equation}
for some constant $c>0$ depending only on $\omega, \epsilon$.
\item The set $S$ is homogeneous in the sense of Carleson: \eqref{Carleson} holds for $\tau=1/2$.

\item If $W \in \mathcal{R}(S)$, then $W$ is also quasiperiodic with the same frequency $\omega$ and similar decay properties of Fourier coefficients; more precisely, $W \in \mathcal{P}(\omega,\sqrt{4\epsilon},\kappa_0/4)$.
\end{enumerate}

We will need a subexponential estimate of the constants
\[
C_m = (\eta_{m,0} + \gamma_m)^{1/2} \prod_{\substack{n \in J \\ n\neq m}} \left( 1 + \frac{\gamma_n}{\eta_{m,n}} \right)^{1/2}.
\]
Fix a constant $L = L(\epsilon,\kappa_0,a,b)$ such that
\[
2 \epsilon \exp\left( -\frac{\kappa_0}2 \lvert m\rvert \right) < L a^4 \lvert m \rvert^{-4b}, \quad \forall m\in \zv\setminus\{0\}.
\]
Define
\[
R_m = \left\{ n \in \zv\setminus\{0\} \mid n\neq m, \gamma_n > L \eta_{n,m}^4 \right\}
\]
It follows from \eqref{gammam}, \eqref{etamnlower}, and our choice of $L$, that $n\in P_m$ implies $\lvert n \rvert \le \lvert m\rvert$. In particular, the set $R_m$ is finite. We order its elements by decreasing $\ell^1$ norm, i.e., we denote $R_m = \{ n_m^{(r)} \}_{r=0}^{r_0}$ with
\begin{equation}\label{nmordered}
\lvert n_m^{(0)} \rvert \ge \dots \ge \lvert n_m^{(r_0)} \rvert.
\end{equation}

\begin{lemma}\begin{enumerate}[(i)]
\item There are constants $\tau,\beta$ which depend only on $a, b, \kappa_0, \nu$, such that
\begin{equation}\label{nmrjump}
\lvert n_m^{(r)} \rvert \ge \tau \exp\left( \beta \lvert n_m^{(r+2)} \rvert \right), \quad \forall  r \le r_0 - 2.
\end{equation}
\item There is a constant $D = D(a,b,\kappa_0,\nu)$ such that
\[
\lvert R_m \rvert \le \log_2 \log_2 \lvert m\rvert + D.
\]
\item There is a constant $F = F(a,b,\kappa_0,\nu,\omega)$ such that
\begin{equation}\label{Cmsubexp}
C_m \le F \exp( F \log \lvert m\rvert \log \log \lvert m\rvert).
\end{equation}
\end{enumerate}
\end{lemma}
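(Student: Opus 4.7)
The plan is to treat the three parts in sequence. The underlying geometric fact driving everything is that membership in $R_m$ forces the corresponding gap to lie exponentially close to gap $m$: combining $\gamma_n > L\eta_{m,n}^4$ with \eqref{gammam} gives
\[
\eta_{m,n} \le (2\epsilon/L)^{1/4}\exp(-\kappa_0|n|/8), \qquad n \in R_m.
\]

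For part (i), I would fix $r \le r_0 - 2$ and look at the three gaps indexed by $n_m^{(r)}, n_m^{(r+1)}, n_m^{(r+2)}$. Each lies either entirely to the left or entirely to the right of the interval $(E_m^-, E_m^+)$ on the real line, so by pigeonhole at least two of them, call them $n$ and $n'$ with $|n| \ge |n'|$, lie on the same side. Since gaps on that side are disjoint intervals ordered along $\mathbb{R}$, an elementary geometric estimate gives $\eta_{n,n'} \le \max(\eta_{n,m},\eta_{n',m}) \le (2\epsilon/L)^{1/4}\exp(-\kappa_0|n'|/8)$, while \eqref{etamnlower} applied to $(n,n')$ yields $\eta_{n,n'} \ge a|n|^{-b}$. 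Solving for $|n|$ and using $|n| \le |n_m^{(r)}|$ together with $|n'| \ge |n_m^{(r+2)}|$ produces \eqref{nmrjump} with $\beta = \kappa_0/(8b)$ and $\tau$ depending only on $a,b,\epsilon,L$.

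For part (ii), iterate \eqref{nmrjump} backwards from $r_0$, starting from the trivial bound $|n_m^{(r_0)}| \ge 1$ (since $R_m \subset \zv\setminus\{0\}$). By induction, $|n_m^{(r_0-2k)}|$ dominates an iterated exponential of height $k$ of a positive constant depending on $\tau,\beta$. Combined with $|n_m^{(0)}| \le |m|$, this forces $r_0$ to be at most a fixed constant times the iterated logarithm of $|m|$, which is dominated by $\log_2\log_2|m|$ for $|m|$ large; so $|R_m|=r_0+1$ obeys the asserted bound with a suitable $D(a,b,\kappa_0,\nu)$.

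For part (iii), take logarithms and split
\[
\log C_m = \tfrac12\log(\eta_{m,0}+\gamma_m) + \tfrac12\sum_{n\in R_m}\log(1+\gamma_n/\eta_{m,n}) + \tfrac12\sum_{\substack{n \neq m \\ n \notin R_m}}\log(1+\gamma_n/\eta_{m,n}).
\]
The first term is $O(\log|m|)$ by \eqref{etam0upper}. For the $R_m$-sum, each summand is at most $\log(1+(2\epsilon/a)|m|^b) = O(\log|m|)$ by \eqref{etamnlower} and $|n|\le|m|$, and part (ii) bounds the number of terms by $\log_2\log_2|m|+D$, so this piece contributes $O(\log|m|\log\log|m|)$. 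The main obstacle is the remaining sum: a direct use of \eqref{etamnlower} yields a bound of order $|m|^b$, which is far too large. The key trick is to use the defining inequality $\gamma_n \le L\eta_{m,n}^4$ itself, rewritten as $\eta_{m,n} \ge (\gamma_n/L)^{1/4}$, to obtain
\[
\gamma_n/\eta_{m,n} \le L^{1/4}\gamma_n^{3/4};
\]
combined with \eqref{gammam}, $\gamma_n^{3/4}$ is absolutely summable over $\zv$, so this piece contributes only $O(1)$. Adding the three pieces gives $\log C_m = O(\log|m|\log\log|m|)$, which is \eqref{Cmsubexp} for $F$ sufficiently large.
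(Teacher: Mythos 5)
Your parts (i) and (iii) are correct and follow essentially the same route as the paper: in (i) you pick two of the three gaps lying on the same side of gap $m$, bound their mutual distance by the larger of their distances to gap $m$ (exponentially small by the defining inequality of $R_m$ together with \eqref{gammam}), and play this against the polynomial lower bound \eqref{etamnlower}; in (iii) the splitting into the $R_m$-factors and the rest, with the observation that $\gamma_n/\eta_{m,n}\le L^{1/4}\gamma_n^{3/4}$ for $n\notin R_m$, is exactly the paper's argument (the paper uses the slightly cruder but equally summable bound $L^{1/4}\gamma_n^{1/4}$).

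Part (ii), however, has a genuine gap. You iterate \eqref{nmrjump} backwards from the base bound $\lvert n_m^{(r_0)}\rvert\ge 1$ and assert that $\lvert n_m^{(r_0-2k)}\rvert$ dominates a divergent tower. But what the induction actually gives is $f^{(k)}(1)$ for $f(x)=\tau e^{\beta x}$, and this iteration need not diverge: if $\tau e^{\beta}$ is near $1$ and $\beta$ is small, $f$ has an attracting fixed point of order $1$ and $f^{(k)}(1)$ stays bounded for all $k$, so no bound on $r_0$ follows. Since $\tau=\bigl(a(L/(2\epsilon))^{1/4}\bigr)^{1/b}$ and $\beta=\kappa_0/(8b)$ are not controlled from below in a way that forces escape to infinity, the step ``this forces $r_0$ to be at most a constant times the iterated logarithm of $\lvert m\rvert$'' is unjustified as written. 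The fix is the paper's device: choose $R=R(\tau,\beta)\ge 2$ with $\tau e^{\beta x}\ge x^4$ for $x\ge R$, apply \eqref{nmrjump} only along the initial string of indices with $\lvert n_m^{(r)}\rvert>R$ to get $\lvert m\rvert\ge\lvert n_m^{(0)}\rvert\ge\lvert n_m^{(2r_1)}\rvert^{4^{r_1}}\ge 2^{4^{r_1}}$, hence $2r_1\le\log_2\log_2\lvert m\rvert$, and count the elements of $R_m$ with $\lvert n\rvert\le R$ trivially by $(2R+1)^\nu$, absorbing them into $D$. With that correction, part (iii), which invokes (ii), goes through exactly as you wrote it.
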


\begin{proof}
(i) Among $n_m^{(r)}, n_m^{(r+1)}, n_m^{(r+2)}$, we can find two for which the corresponding gaps lie on the same side of the $m$-th gap; denote them $s = n_m^{(r_1)}, t= n_m^{(r_2)}$, with $r_1<r_2$. By the ordering \eqref{nmordered}, since $r \le r_1 < r_2 \le r+2$, it suffices to prove
\begin{equation}\label{stAB}
\lvert s \rvert \ge \tau \exp\left( \beta \lvert t \rvert \right).
\end{equation}
Since the gaps labelled by $s$, $t$ are on the same side of the $m$-th gap,
\[
\eta_{s, t} \le \max (\eta_{m, t}, \eta_{m,s} ) <  \max( L^{-1/4} \gamma_{t}^{1/4}, L^{-1/4} \gamma_{s}^{1/4} ) \le L^{-1/4} (2 \epsilon)^{1/4} \exp\left( -\frac{\kappa_0}8 \lvert t \rvert \right)
\]
Combining this with \eqref{etamnlower} gives
\[
a \lvert s \rvert^{-b} < L^{-1/4} (2 \epsilon)^{1/4} \exp\left( -\frac{\kappa_0}8 \lvert t\rvert \right),
\]
which can be rewritten in the form \eqref{stAB}.

(ii) Pick $R=R(a,b,\kappa_0,\nu)\ge 2$ such that $\tau \exp(\beta x) \ge x^4$ for $x \ge R$. Assume that $\lvert n_m^{(r)} \rvert > R$ for all $r \le  2 r_1$. Then \eqref{nmrjump} gives
\[
\lvert m\rvert \ge \lvert n_m^{(0)} \rvert \ge \lvert n_m^{(2)} \rvert^4 \ge \dots \ge \lvert n_m^{(2r_1)} \rvert^{4^{r_1}} \ge 2^{4^{r_1}},
\]
which implies $2 r_1 \le \log_2 \log_2 \lvert m\rvert$, bounding the number of $n_m^{(r)}$ with $\lvert n_m^{(r)}\rvert > R$. Trivially, the number of $n_m^{(r)}$ with $\lvert n_m^{(r)} \rvert \le R$ is less than $(2R+1)^\nu$. Combining the two estimates completes the proof.

(iii) To estimate $C_m$, we estimate first the part of the product with $n\notin R_m$,
\[
\prod_{\substack{n \in J \setminus R_m}} \left( 1 + \frac{\gamma_n}{\eta_{m,n}} \right)^{1/2} \le \prod_{\substack{n \in J \setminus R_m}} \left( 1 + L^{1/4} \gamma_n^{1/4} \right)^{1/2} \le \exp\left( \frac 12 \sum_{n\in J} L^{1/4} \gamma_n^{1/4} \right) \le \tilde C
\]
by \eqref{gammam}. For the part of the product with $n\in R_m$, use $\lvert n\rvert \le \lvert m\rvert$ and the bound on $\lvert R_m\rvert$ to conclude that
\[
\prod_{\substack{n \in R_m}} \left( 1 + \frac{\gamma_n}{\eta_{m,n}} \right)^{1/2} \le \prod_{\substack{n \in R_m}} \left( 1 + \frac{2 \epsilon}{a \lvert m\rvert^{-b}} \right)^{1/2} \le \left( 1 + 2\epsilon a^{-1} \lvert m\rvert^b \right)^{\log_2 \log_2 \lvert m\rvert + D}.
\]
Finally, the factor $\eta_{m,0}+\gamma_m$ is polynomially bounded by \eqref{etam0upper} and \eqref{gammam}.
\end{proof}

\begin{proof}[Proof of Theorem~\ref{TQ}]
In order to apply Theorem~\ref{Tap}, it only remains to verify that $H_V$ obeys the conditions \eqref{Craig2}, \eqref{traceconditions}.

\eqref{traceconditions} follows immediately from
\[
\sum_{m\in\zv\setminus\{0\}} (1+\eta_{m,0}^2) \gamma_m \le  2\epsilon (1+a^2) \sum_{m\in\zv\setminus\{0\}}  \lvert m\rvert^{2b} \exp\left( -\frac{\kappa_0}2 \lvert m\rvert \right) < \infty.
\]
Similarly, $\sum \gamma_m^{1/2} < \infty$ follows immediately from \eqref{gammam}. The second inequality in \eqref{Craig2},
\[
\sup_{m\in J} \gamma_m^{1/2} \frac{1+\eta_{m,0}}{\eta_{m,0}} C_m < \infty,
\]
holds since the exponential decay of $\gamma_m$ controls the (at most subexponential) growth of $\eta_{m,0}^{-1}$ and of $C_m$. For the third inequality in \eqref{Craig2}, use
\[
\frac{\gamma_m^{1/2} \gamma_n^{1/2}}{\eta_{m,n}} \le (2\epsilon L)^{1/4} \gamma_m^{1/4} \gamma_n^{1/4}
\]
for all $m,n\in J$ to conclude that
\[
\sup_{m\in J} \sum_{\substack{n \in J  \\ n \neq m}} \left( \frac{\gamma_m^{1/2} \gamma_n^{1/2}}{\eta_{m,n}} \right)^a (1 + \eta_{m,0} ) C_m  \le \sup_{m\in J} \sum_{\substack{n \in J  \\ n \neq m}} \left( \gamma_m^{1/4} \gamma_n^{1/4} \right)^a (1 + \eta_{m,0} ) C_m < \infty,
\]
again by the observation that the exponential decay of $\gamma_m$ controls the at most subexponential growth of the other factors.

Therefore, Theorem~\ref{Tap} applies to the initial data $V \in \mathcal{P}(\omega,\epsilon,\kappa_0)$. This implies Theorem~\ref{TQ}, since $\mathcal{R}(S) \subset \mathcal{P}(\omega,\sqrt{4\epsilon},\kappa_0/4)$ by \cite{DGL2}.
\end{proof}

\end{document}